\documentclass[leqno,12pt,oneside]{amsart}
\usepackage{graphicx}
\usepackage{mathtools}

\usepackage{amsmath,amsthm,amsfonts,amssymb,latexsym,amscd,enumerate}
\usepackage[a4paper, total={6in, 9in}]{geometry}
\usepackage[all]{xy}
\usepackage{xy}
\usepackage{xcolor}
\usepackage{boxedminipage}

\theoremstyle{plain}
\newtheorem{theorem}{Theorem}[section]
\newtheorem{lemma}[theorem]{Lemma}
\newtheorem{corollary}[theorem]{Corollary}
\newtheorem{proposition}[theorem]{Proposition}

\theoremstyle{definition}
\newtheorem{definition}[theorem]{Definition}

\newtheorem{remark}[theorem]{Remark}

\newtheorem*{thm:obstruction}{Theorem \ref{thm:obstruction}}
\newtheorem*{thm:CalliasKKClass}{Theorem \ref{thm:CalliasKKClass}}
\newtheorem*{thm:infiniteKofCorona}{Theorem \ref{thm:infiniteKofCorona}}

\theoremstyle{remark}



\newcommand{\norm}[1]{\left\lVert#1\right\rVert}

\DeclareMathOperator{\ch}{ch}

\newcommand{\dirac}{\partial \!\!\! \slash}

\DeclareMathOperator{\End}{End}

\DeclareMathOperator{\ind}{index}



\setlength{\footskip}{35pt}
\begin{document}   
	\sloppy
	\begin{abstract}
	We formulate, for any Lie group $G$ acting isometrically on a manifold $M$, the general notion of a $G$-equivariant elliptic operator that is invertible outside of a $G$-cocompact subset of $M$. We prove a version of the Rellich lemma for this setting and use this to define the equivariant index of such operators. We show that $G$-equivariant Callias-type operators are self-adjoint, regular, and hence equivariantly invertible at infinity. Such operators explicitly arise from a pairing of the Dirac operator with the equivariant Higson corona. We apply the theory developed herein to obtain an obstruction to positive scalar curvature metrics on non-cocompact manifolds.
	\end{abstract}
	
	\title[Equivariant Callias Index and PSC]
	{Index of Equivariant Callias-Type Operators and Invariant Metrics of Positive Scalar Curvature}
	
	\author{Hao Guo}
	\keywords{Positive scalar curvature, equivariant index, Callias operator}
	
	\subjclass[2010]{Primary 58B34, Secondary 58J20, 19K35, 19L47, 19K56, 47C15}
	
	\maketitle
	\section{Introduction}
	It is well-known that a Dirac operator $D$ on a non-compact manifold $M$ is not in general Fredholm, since the usual version of the Rellich lemma fails in this setting. Nevertheless, it is possible to modify $D$ so as to make it Fredholm but still remain within the class of Dirac-type operators. One such modification is a \textit{Callias-type operator}, which was initially studied by Callias \cite{Callias} on $M$ a Euclidean space, before being generalised to the setting of Riemannian manifolds by others \cite{BottSeeley},\cite{Anghel},\cite{BruningMoscovici},\cite{Bunke}. A Callias-type operator may be written as $B=D+\Phi$, where $\Phi$ is an endomorphism making $B$ invertible at infinity. As in \cite{Bunke}, one may form the order-$0$ bounded transform $F$ of $B$, defined formally by 
	$$F\coloneqq B(B^2+f)^{-1/2},$$
	where $f$ is a compactly supported function. The formal computation $F^2-1=-f(B^2+f)^{-1}$ shows that $F$ is a Fredholm operator, since multiplication by the compactly supported function $f$ defines a compact operator between Sobolev spaces $H^i\rightarrow H^j$ for all $i>j$.
	
	More generally, if $M$ is a manifold with a proper, non-cocompact action of a Lie group $G$, a $G$-invariant Dirac operator on $M$ need not be Fredholm in the sense of $C^*$-algebras. From the point of view of the equivariant index map in $KK$-theory, the lack of a general index for elements of the equivariant analytic $K$-homology $K_*^G(M)$ can be traced back to the lack of a counterpart to the canonical projection in $KK(\mathbb{C},C_0(M)\rtimes G)$ defined by a compactly supported cut-off function when $M/G$ is compact.
	
	In this paper we introduce $G$-equivariant analogues of the Sobolev spaces $H^i$ and the Rellich lemma. With these tools, the formal computation above can be made to work in the non-cocompact setting for any operator whose square is positive outside of a cocompact set. We establish that such operators have a $G$-equivariant index. 
	
	This provides an equivariant generalization of the situation considered by Roe in \cite{Roe}. It is possible to formulate the index theory of equivariant Callias-type operators in terms of the Roe algebra. We will do this, as well as consider applications, in forthcoming papers.
	
	\vspace{0.5cm}
	
	In section \ref{sec:G-Sobolev}, we define {\it $G$-Sobolev modules} $\mathcal{E}^i$ over the (maximal or reduced) group $C^*$-algebra $C^*(G)$. The main purpose of this construction is to establish the analogue of the Rellich lemma alluded to above.
	
	In section \ref{sec:G-Invertible} we define the notion of {\it $G$-invertibility at infinity} - an equivariant generalisation of the notion of invertibility at infinity (see for example \cite{Bunke} section $1$).
	
	\begin{thm:CalliasKKClass}
		Suppose $B$ is $G$-invertible at infinity. Let $F$ be its bounded transform. Then $(\mathcal{E}^0,F)$ is a Kasparov module over the pair of $C^*$-algebras $(\mathbb{C},C^*(G))$. The class $[\mathcal{E}^0,F]\in KK(\mathbb{C},C^*(G))$ is independent of the choice of the cocompactly supported function $f$ used to define $F$.
	\end{thm:CalliasKKClass}

	This implies that $G$-invertible-at-infinity operators are equivariantly Fredholm. An example of such an operator is a {\it $G$-Callias-type operator}, defined in section \ref{sec:G-Callias}. 
	
	We show that these operators are essentially self-adjoint and regular in the sense of Hilbert modules. We give an explicit construction of $G$-Callias-type operators in section \ref{sec:Phi} using the $K$-theory of an equivariant Higson corona of $M$, whose $K$-theory turns out to be highly non-trivial. In particular, we prove:
	\begin{thm:infiniteKofCorona}
		Let $M$ be a complete $G$-Riemannian manifold with  $M/G$ non-compact. Then the $K$-theory of the Higson $G$-corona of $M$ is uncountable.
	\end{thm:infiniteKofCorona}
	
	In section \ref{sec:PSC} we give an application of the theory to $G$-invariant metrics of positive scalar curvature on non-cocompact $M$. We prove:
	\begin{thm:obstruction}
		Let $M$ be a $G$-equivariantly spin Riemannian manifold with $G$-spin-Dirac operator $\dirac_0$. If $M$ admits a $G$-invariant metric with pointwise positive scalar curvature, then the $G$-index of every $G$-Callias-type operator on $M$ vanishes. Let $F$ be the bounded transform of the $G$-Callias-type operator defined by $\dirac_0$ and any $G$-admissible endomorphism $\Phi$. Then
		$$\ind_G(F)=0\in K_0(C^*(G)).$$
	\end{thm:obstruction}
	This result vastly generalises two existing results on obstructions to $G$-invariant positive scalar curvature on proper $G$-spin manifolds, where $G$ is a non-compact Lie group. The first is a recent result of Zhang (Theorem 2.2 of \cite{WeipingShort}), which was the first generalisation of Lichnerowicz' vanishing theorem to the cocompact-action case; the notion of index used there was the Mathai-Zhang index (see \cite{MZ}). The second is Theorem 54 of \cite{GMW}, which states that the equivariant index of a $G$-invariant Dirac operator vanishes in the presence of $G$-invariant positive scalar curvature.
	
	The methods and results of this paper can be contrasted with the equivariant index theory and applications studied in \cite{GMW}. First, \cite{GMW} deals exclusively with index theory in the cocompact case, where the $C^*(G)$-Fredholmness of the Dirac operator was known. In addition, \cite{GMW} focused almost entirely on the case of almost-connected $G$, where a global slice of the manifold exists. 
	
	The paper \cite{Cecchini} also deals with Callias-type operators in the setting of Hilbert modules on non-compact manifolds. The indices studied there and in the present paper are in general different generalisations of the classical Callias-type index, although the case when $G$ is the fundamental group of a compact manifold can be approached from both directions. The technical difference between the two approaches is that whereas in \cite{Cecchini}, the Hilbert module structure arises by twisting the space of $L^2$-sections of a vector bundle by a Hilbert module bundle, the $G$-Sobolev modules we define here arise from the $G$-action on the space of compactly supported smooth sections, and then completing with respect to a particular inner product with values in the group $C^*$-algebra. Notably, \cite{Cecchini} establishes a twisted Callias-type index theorem that allows one to reduce the computation of a Callias-type index to an index on a compact hypersurface, using the machinery of $KK$-theory. The analogous theorem in the $G$-equivariant case, although expected to be true, does not follow directly from the results of \cite{Cecchini}. We hope to address this in future work.
	
	Finally, other versions of $G$-index theory, for non-compact $M/G$ and $G$, have been developed elsewhere. This includes the work of Hochs-Mathai \cite{HochsMathai},\cite{HochsMathai2}, Braverman \cite{Braverman}, and Hochs-Song \cite{Hochs-Song1},\cite{Hochs-Song2},\cite{Hochs-Song3}.\\

The results contained in this paper set the stage for a number of further questions. The first concerns the Baum-Connes conjecture. When $M/G$ is compact, the equivariant index map for $G$-Callias-type operators reduces to the Baum-Connes assembly map. However, when $M/G$ is non-compact, the class of operators encapsulated by Callias index theory is strictly larger, and an interesting open question arises:\\

\noindent\textbf{Open question:} Does the index of a $G$-Callias-type operator always lie in the image of the Baum-Connes assembly map?\\

Another direction in which we will look to apply the theory developed herein is the problem of \emph{quantisation commutes with reduction} (or $[Q,R]=0$) for non-cocompact manifolds. In \cite{Landsman}, Landsman used the language of noncommutative geometry to formulate a version of geometric quantization for $G$-cocompact Spin$^c$-manifolds. He defined quantization as taking equivariant index of a Spin$^c$-Dirac operator, via the equivariant index map $\ind_G:K_0^G(M)\rightarrow K_0(C^*(G)),$ while reduction of a quantized system is defined to be the image of the index under a certain tracial map 
$$R_G:K_0(C^*(G))\rightarrow K_0(\mathbb{C})\cong\mathbb{Z}.$$ 
Landsman conjectured that, under certain assumptions on the symmetry group $G$ (such as semi-simplicity) $[Q,R]=0$ holds: 
$$R_G(\ind_G(D_M))=\textnormal{index}(D_{M_0}).$$
Here $M_0$ is a certain (compact) reduced space of $M$ defined as the inverse image of the regular value $0$ under the symplectic moment map $\mu$, and the index on the right-hand side of the equation is the classical Fredholm index. 

This conjecture was solved by Mathai and Zhang \cite{MZ} in 2008, following work done in special cases by Hochs \cite{HochsQuantisation} and Hochs-Landsman \cite{HochsLandsman}.\\

The theory we formulate in this paper provides a natural class of operators with which to investigate the $[Q,R]=0$ problem in the non-cocompact setting, namely $G$-equivariant Callias-type operators that arise from an equivariant Spin$^c$ structure on $M$. In future work, we aim to address the following specific question.\\

\noindent\textbf{Open question:} Does $[Q,R]=0$ hold for $G$-Spin$^c$ Callias-type operators?\\

\subsection*{Acknowledgements}
H.G. would like to thank his supervisors, Mathai Varghese and Hang Wang, for their guidance in this project, and would like to acknowledge the support provided by a University of Adelaide Divisional Scholarship. Additionally, H.G. would like to thank the referees, whose comments have clarified the presentation of this paper.
	\vspace{1cm}

	\tableofcontents
	\vspace{1cm}

	\section{Notation and Terminology}
	\label{sec:Notation}
	
	Throughout this paper, $M$ will be a Riemannian manifold on which a Lie group $G$ acts smoothly, properly and isometrically. We shall call this set-up {\it $G$-Riemannian manifold}. If the orbit space $M/G$ is compact, we say that the $G$-action is {\it cocompact}, or that $M$ is a {\it cocompact $G$-manifold}. Except in section \ref{sec:PSC}, we will suppress the Riemannian metric when mentioning $M$, to avoid unnecessary confusion with elements of $G$.
	
	Fix a left Haar measure $dg$ on $G$, with modular function $\mu\colon G\rightarrow\mathbb{R}^+$ given by $d(gs) = \mu(s)dg$. We use $d\mu$ to denote the smooth $G$-invariant measure on $M$ induced by the Riemannian metric.
	
	Let $\pi\colon M\rightarrow M/G$ be the natural projection. A subset $K\subseteq M$ is called {\it cocompact} if $\pi(K)$ has compact closure in $M/G$. We say that $S\subseteq M$ is {\it cocompactly compact} if for any cocompact $K\subseteq M$, $S\cap K$ has compact closure in $M$. A proper $G$-manifold $M$ admits a smooth {\it cut-off function} $\mathfrak{c}\colon M\rightarrow [0,1]$, with the property that for all $x\in M$,
	$$\int_G\mathfrak{c}(g^{-1}x)\,dg = 1.$$
	Clearly, supp$(\mathfrak{c})$ is cocompactly compact.
	
	Let $\pi_1,\pi_2\colon M\times M\rightarrow M$ be the projection maps onto the first and second factors. Given an operator $A$ on $M$ with Schwartz kernel $k_A$, we say that $k_A$ has {\it compact} (resp. {\it cocompactly compact}) support if there exists a compact (resp. cocompactly compact) subset $\tilde{K}\subseteq M$ such that supp$(k_A)\subseteq\tilde{K}\times\tilde{K}$.
	
	For $A$ a $C^*$-algebra, denote its positive (which we use to mean positive semi-definite) elements by $A_+$. For Hilbert $A$-modules $\mathcal{M}$ and $\mathcal{N}$, denote the bounded adjointable operators and compact operators $\mathcal{M}\rightarrow\mathcal{N}$ by $\mathcal{L}(\mathcal{M},\mathcal{N})$ and $\mathcal{K}(\mathcal{M},\mathcal{N})$ respectively. If $\mathcal{M}=\mathcal{N}$, we use $\mathcal{L}(\mathcal{M})$ and $\mathcal{K}(\mathcal{M})$ respectively.
		\vspace{1cm}
	
	\section{Sobolev Modules}
	\label{sec:G-Sobolev}
	We first set up certain generalisations of Sobolev spaces that take into account the $G$-action. The definition is based on Kasparov's definition of the module $\mathcal{E}$ (see for instance \cite{Kasparov} section 5), which is a $C^*(G)$-module analogue of $L^2(E)$.
	
	First let us recall the definition of $\mathcal{E}$. Let $E$ be a $G$-equivariant Hermitian vector bundle over a non-cocompact $G$-Riemannian manifold $M$. The space of compactly supported smooth sections $C_c^\infty(E)$ can be given a pre-Hilbert $C_c^\infty(G)$-module structure, with right $C_c^\infty(G)$-action and $C_c^\infty(G)$-valued inner product given by
	$$(e\cdot b)(x) = \int_G g(e)(x)\cdot b(g^{-1})\cdot\mu(g)^{-1/2}\,dg\in C_c^\infty(E),$$
	$$\langle e_1,e_2\rangle(g)=\mu(g)^{-1/2}\int_M\langle e_1(x),g(e_2)(x)\rangle_E\,d\mu\in C_c^\infty(G),$$
	for $e,e_1,e_2\in C_c^\infty(E)$ and $b\in C_c^\infty(G)$. Here $G$ acts on $C_c(E)$ by $g(e)(x)\coloneqq g(e(g^{-1}x)).$ Then $\mathcal{E}$ is defined to be the completion of $C_c^\infty(E)$ under the norm induced by the above inner product. The $C_c^\infty(G)$-action extends to a $C^*(G)$-action on $\mathcal{E}$ and the inner product to a $C^*(G)$-valued inner product. Thus $\mathcal{E}$ is a Hilbert $C^*(G)$-module.
	
	We generalise this definition using Sobolev-type inner products defined using a Dirac-type operator, by which we mean an operator whose principal symbol is that of a Dirac operator. In particular, this includes the Dirac-plus-potential-type operators of section \ref{sec:G-Callias}.
	
	\begin{definition}
		\label{def:Sobolev}
		Let $M$ be a (not necessarily cocompact) $G$-Riemannian manifold and $E$ a Hermitian $G$-vector bundle over $M$. Let $B$ be a $G$-invariant, formally self-adjoint Dirac-type operator on $E$ with initial domain $C_c^\infty(E)$. For each integer $i\geq 0$, let $C_c^{\infty,i}(E)$ be the pre-Hilbert $C_c^\infty(G)$-module with right $C_c^\infty(G)$-action and $C_c^\infty(G)$-valued inner product given by 
		$$(e\cdot b)(x) = \int_G g(e)(x)\cdot b(g^{-1})\mu(g)^{-1/2}\,dg\in C_c^\infty(E),$$
		$$\langle e_1,e_2\rangle_i(g)=\mu(g)^{-1/2}\sum_{k=0}^i\int_M\langle B^k e_1(x),B^k g(e_2)(x)\rangle_E\,d\mu\in C_c^\infty(G),$$
		for $e,e_1,e_2\in C_c^\infty(E)$ and $b\in C_c^\infty(G)$, and where we set $B^0$ equal to the identity operator. Positivity of these inner products is proved in Lemma \ref{lem:Positivityofi}. Denote by $\mathcal{E}^i(E)$, or simply $\mathcal{E}^i$, the vector space completion of $C_c^{\infty,i}(E)$ with respect to the norm induced by $\langle\,\,,\,\,\rangle_i$, and extend naturally the $C_c^\infty(G)$-action to a $C^*(G)$-action and $\langle\,\,,\,\,\rangle_i$ to a $C^*(G)$-valued inner product, to give $\mathcal{E}^i$ the structure of a Hilbert $C^*(G)$-module. Let $\norm{\,\cdot\,}_i$ denote the associated norm. We call $\mathcal{E}^i$ the {\it $i$-th $G$-Sobolev module with respect to $B$}. 
	\end{definition}
	\begin{lemma}
		\label{lem:Positivityofi}
		For each $i\geq 0$, the $C_c^\infty(G)$-valued inner product $\langle\,\,,\,\,\rangle_i$ on the pre-Hilbert $C_c^\infty(G)$-module $C_c^{\infty,i}(E)$ is positive in $C^*(G)$.
	\end{lemma}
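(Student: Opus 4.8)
The plan is to reduce positivity of $\langle\,,\,\rangle_i$ to positivity of the degree-zero inner product, which is precisely the inner product defining Kasparov's module $\mathcal{E}$ recalled above and is therefore already at our disposal. Write $\langle\,,\,\rangle_0$ for that $i=0$ case, namely $\langle e_1,e_2\rangle_0(g)=\mu(g)^{-1/2}\int_M\langle e_1(x),g(e_2)(x)\rangle_E\,d\mu$, and recall that it takes positive values in $C^*(G)$ since it makes $\mathcal{E}$ a Hilbert $C^*(G)$-module.

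First I would record two elementary facts about $B$. Because $B$ is a first-order differential operator (being of Dirac type), it maps $C_c^\infty(E)$ into itself, so $B^k e\in C_c^\infty(E)$ for every $e\in C_c^\infty(E)$ and every $k\ge 0$. Because $B$ is $G$-invariant, it intertwines the $G$-action on sections, $B(g(e))=g(Be)$, and hence $B^k(g(e))=g(B^k e)$ for all $g\in G$ and all $k$.

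Substituting $B^k(g(e_2))=g(B^k e_2)$ into the defining formula for $\langle\,,\,\rangle_i$ and interchanging the finite sum with the integral yields, for $e_1,e_2\in C_c^\infty(E)$, the identity in $C_c^\infty(G)$
$$\langle e_1,e_2\rangle_i(g)=\mu(g)^{-1/2}\sum_{k=0}^i\int_M\langle B^k e_1(x),g(B^k e_2)(x)\rangle_E\,d\mu=\sum_{k=0}^i\langle B^k e_1,B^k e_2\rangle_0(g).$$
Taking $e_1=e_2=e$ and passing to the completion $C^*(G)$ gives $\langle e,e\rangle_i=\sum_{k=0}^i\langle B^k e,B^k e\rangle_0$ in $C^*(G)$. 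Each summand $\langle B^k e,B^k e\rangle_0$ is a positive element of $C^*(G)$, since $B^k e\in C_c^\infty(E)$ and $\langle\,,\,\rangle_0$ is positive; and a finite sum of positive elements of a $C^*$-algebra is positive. Hence $\langle e,e\rangle_i\ge 0$ in $C^*(G)$, which is the assertion.

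There is no serious obstacle here: the one place any hypothesis on $B$ enters is the commutation $B^k(g(e))=g(B^k e)$, i.e. reading "$G$-invariant" as "$B$ intertwines the $G$-actions on sections", which is immediate, and the one place completeness of $M$ or properness of the action is not even needed. In fact the same computation shows slightly more, which I would note for later use: the map $e\mapsto(e,Be,\dots,B^i e)$ is an isometry of $C_c^{\infty,i}(E)$ into $\bigoplus_{k=0}^i\mathcal{E}$, so $\mathcal{E}^i$ may be identified with the closure of the graph of $e\mapsto(Be,\dots,B^i e)$ inside $(i+1)$ copies of $\mathcal{E}$.
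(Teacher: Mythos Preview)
Your proof is correct and follows essentially the same approach as the paper: decompose $\langle e,e\rangle_i$ as the finite sum $\sum_{k=0}^i\langle B^k e,B^k e\rangle_0$, invoke the known positivity of $\langle\,,\,\rangle_0$ from Kasparov's construction, and use that finite sums of positive elements are positive. You are simply more explicit than the paper about why the decomposition holds (namely the $G$-invariance of $B$ and the fact that $B$ preserves $C_c^\infty(E)$), and you add a useful remark about the isometric embedding of $\mathcal{E}^i$ into $\bigoplus_{k=0}^i\mathcal{E}$.
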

	\begin{proof}
		For any $u\in C_c^{\infty,i}(E)$ we can write
		$\langle u,u\rangle_i=\sum_{k=0}^i\langle B^k u,B^k u\rangle_0\in C_c^\infty(G)\subseteq C^*(G).$ Each summand is in $C^*(G)_+$, as shown in \cite{Kasparov} section 5, and a sum of finitely many positive elements in a $C^*$-algebra is positive.
	\end{proof}
	\begin{definition}
		\label{def:triple}
		A {\it  $G$-triple} $(G,M,E)$ consists of a Lie group $G$, a proper $G$-Riemannian manifold $M$ and a Hermitian $G$-vector bundle $E\rightarrow M$, together with a $G$-invariant Dirac-type operator $B$ on $E$ and the collection $\{\mathcal{E}^i\}$ of $G$-Sobolev modules formed using $B$. If $E=E^+\oplus E^-$ is $\mathbb{Z}_2$-graded and $B=B^-\oplus B^+$ is an odd operator, we shall call $(G,M,E)$ a $\mathbb{Z}_2$-graded $G$-triple. If $M$ is cocompact, we shall say the triple $(G,M,E)$ is cocompact.
	\end{definition}
	\begin{remark}
	In general, the inner products and modules $\mathcal{E}^i$ in Definitions \ref{def:Sobolev} and \ref{def:triple} depend on the choice of the operator $B$.
	\end{remark}
	Given a $G$-triple with operator $B$, $\textnormal{dom}(\overline{B^i})$ equipped with the graph norm is isomorphic to $\mathcal{E}^i$. Thus $\overline{B^i}$ is a bounded operator $\mathcal{E}^i\rightarrow\mathcal{E}^0$ for all $i\geq 0$; Proposition \ref{prop:Bbounded} implies that $\overline{B^i}$ is adjointable. Further, the results of subsection \ref{subsec:essentialregularity} imply that $B$ is regular and essentially self-adjoint. Therefore, except in those subsections, we will not make the distinction between $B$ and its closure $\overline{B}$. By $B^i$ we will mean the bounded adjointable operator $\overline{B^i}:\mathcal{E}^i\rightarrow\mathcal{E}^0$.
	
	\subsection{Boundedness and Adjointability}
	
	We now establish basic boundedness and adjointability results for $G$-Sobolev modules. When $M/G$ is compact, Kasparov (\cite{Kasparov} Theorem 5.4) proved that an $L^2(E)$-bounded, $G$-invariant operator on $C_c(E)$ with properly supported Schwartz kernel defines a bounded adjointable operator on $\mathcal{E}=\mathcal{E}^0$. We now use the same method of proof to establish the following result.
	\begin{proposition}
		\label{prop:boundedhilbert}
		Let $(G,M,E)$ be a cocompact $G$-triple with operator $B$. Let $A$ be an operator on $C_c^\infty(E)$ that is $G$-invariant and bounded $H^i(E)\rightarrow H^j(E)$, where $H^k(E)$ denotes the completion of $C_c^\infty(E)$ with respect to the inner product $\langle B^k e_1,B^k e_2\rangle_{L^2(E)}$, for $e_1,e_2\in C_c^\infty(E)$.  If $A$ has properly supported Schwartz kernel, then $A$ defines an element of $\mathcal{L}(\mathcal{E}^i,\mathcal{E}^j)$ with norm $\leq C\cdot\norm{A}_{B(H^i,H^j)}$, where $C$ is a constant that depends only on the supports of $\mathfrak{c}A^*A+A^*A\mathfrak{c}$ and $\mathfrak{c}AA^*+AA^*\mathfrak{c}$.
	\end{proposition}
	\begin{remark}
		\label{rem:unitary}
		The action of $G$ on $C_c^\infty(E)$ extends to a unitary action on each $H^i(E)$. For an operator $T$ and $g\in G$, we define
		$$g(T):=gTg^{-1}.$$
		In particular, we may speak of $G$-invariant operators $H^i(E)\rightarrow H^j(E)$.
	\end{remark}
	The proof of Proposition \ref{prop:boundedhilbert} uses the following lemma.
	\begin{lemma}
		Let $(G,M,E)$ be a cocompact $G$-triple. Let $T$ be a bounded positive operator on $H^i(E)$ with compactly supported Schwartz kernel. Then for $e\in C_c^\infty(E)$, $$\left\langle e,\left(\int_G s(T)\,ds\right)(e)\right\rangle_{\mathcal{E}^i}\in C^*(G)_+.$$ 
	\end{lemma}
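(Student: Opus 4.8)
The plan is to reduce the statement to the cocompact-action version of the same fact — namely that for a $G$-invariant operator with compactly supported Schwartz kernel, averaging over $G$ produces a positive element of $C^*(G)$ — but applied in the setting of $H^i(E)$ rather than $L^2(E)$. The key observation is that $H^i(E)$ is itself an $L^2$-space: choosing a $G$-invariant elliptic operator (here, built from $B$, e.g. $(1+B^2)^{i/2}$ or more concretely iterating $B$) gives a $G$-invariant isomorphism $H^i(E)\cong L^2(E)$ via $u\mapsto (1+B^2)^{i/2}u$, under which the inner product $\langle\,\cdot\,,\,\cdot\,\rangle_i$ on $C_c^\infty(E)$ corresponds to the $L^2$ inner product. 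Under this identification, $T$ being bounded and positive on $H^i(E)$ is equivalent to the conjugated operator $\widetilde T = (1+B^2)^{i/2}\,T\,(1+B^2)^{-i/2}$ being bounded and positive on $L^2(E)$, and $\widetilde T$ is still $G$-invariant since $B$ and hence $(1+B^2)^{i/2}$ is.

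First I would make precise the claim that $\langle e,\int_G s(T)\,ds\rangle_{\mathcal{E}^i}$ makes sense: since $T$ has compactly supported Schwartz kernel, so does $s(T)$ for each $s$, with supports varying over a cocompact family because the $G$-action is proper and $M/G$ is compact; thus the integral $\int_G s(T)\,ds$ converges (say, strongly on $C_c^\infty(E)$, or in an appropriate operator topology) and defines a $G$-invariant operator. Second, I would write out the pairing explicitly: for $e\in C_c^\infty(E)$,
$$\left\langle e,\ \int_G s(T)\,ds\ e\right\rangle_{\mathcal{E}^i}(g) = \mu(g)^{-1/2}\sum_{k=0}^{i}\int_M \left\langle B^k e(x),\ B^k\, g\!\left(\int_G s(T)e\right)\!(x)\right\rangle_E d\mu,$$
and identify the right-hand side, after a change of variables in the Haar integral and using $G$-invariance of $B$ and of the measure, with the expression one gets from the $L^2$-version applied to $\widetilde e = (1+B^2)^{i/2}e$ and $\widetilde T$. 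Positivity in $C^*(G)$ then follows from the cocompact $L^2$ statement — essentially Kasparov's argument that the matrix coefficient $g\mapsto \langle \widetilde e, g\widetilde T\widetilde e\rangle_{L^2}$ is a positive-definite function on $G$ when $\widetilde T\geq 0$ and $G$-invariant, because $\langle f, \widetilde T f\rangle_{L^2}\geq 0$ for $f = \sum_j b_j\cdot g_j(\widetilde e)$ expands into exactly such a matrix-coefficient combination.

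The main obstacle I expect is the analytic bookkeeping around the operator $(1+B^2)^{i/2}$: one must check that it restricts to a genuine $G$-equivariant topological isomorphism $H^i(E)\to L^2(E)$ on the relevant cores, that conjugation by it preserves the class of operators to which the cocompact lemma applies (in particular, that $\widetilde T$ still has properly — though no longer compactly — supported kernel, or else one argues with a density/approximation step keeping compact support), and that the Haar-integral $\int_G s(T)\,ds$ interacts with this conjugation as claimed. An alternative that sidesteps the conjugation is to rerun Kasparov's proof of Theorem 5.4 of \cite{Kasparov} verbatim with $L^2(E)$ replaced throughout by $H^i(E)$: the only inputs are that $G$ acts unitarily on $H^i(E)$ (Remark \ref{rem:unitary}), that $T$ is positive and $G$-invariant, and that properness plus cocompactness make the averaging integral converge and the resulting matrix coefficient a continuous compactly-presented positive-definite function — all of which hold here. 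Either route gives the conclusion; I would present the second, as it avoids unnecessary operator-theoretic hypotheses on $(1+B^2)^{i/2}$.
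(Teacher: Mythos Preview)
Your second route---rerunning Kasparov's Lemma 5.3 verbatim with $H^i(E)$ in place of $L^2(E)$---is exactly what the paper does: it takes the positive square root $T^{1/2}$ on $H^i(E)$, notes that $s\mapsto T^{1/2}(s(e))$ has compact support in $G$, forms the vector $v=\int_G \mu^{-1/2}(s)\,T^{1/2}(s(e))\otimes s(h)\,ds$ in $H^i(E)\otimes H$ for an arbitrary unitary representation $H$, and identifies $\norm{v}^2$ with the pairing in question acting on $h$. One small slip: $T$ itself is not assumed $G$-invariant (only $\int_G s(T)\,ds$ is), but this does not affect the argument, since the square-root trick uses only positivity of $T$ and the compact support of its kernel.
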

	\begin{proof}
		The proof we give is a sketch based on \cite{Kasparov} Lemma 5.3. Note that $T$ has a unique positive square root $T^{1/2}\colon H^i(E)\rightarrow H^i(E)$, so that
		$$\langle s(e),T(s(e)\rangle_{H^i(E)}=\langle T^{1/2}(s(e)),T^{1/2}(s(e))\rangle_{H^i(E)}$$
		for $e\in C_c(E)$ and $s\in G$. The function $G\rightarrow H^i(E)$ given by $s\mapsto T^{1/2}(s(e))$ has compact support in $G$. It can be shown as in \cite{Kasparov} Lemma 5.3 that, for any unitary representation of $G$ on a Hilbert space $H$ and any $h\in H$, 
		$$v\coloneqq\int_G \mu^{-1/2}(s)T^{1/2}(s(e))\otimes s(h)\,ds$$ 
		is a well-defined vector in $H^i(E)\otimes H$, and that $v$ has norm equal to
		\begin{align*}
		&\int_G\int_G\mu^{-1/2}(t)\mu^{-1/2}(s)\langle T^{1/2}s(e),T^{1/2}(t(e))\rangle_{H^i(E)}\cdot\langle s(h),t(h)\rangle_H\,ds\,dt\\
		&=\int_G\bigg\langle e,\left(\int_Gs(T)\,ds\right)(e)\bigg\rangle_{\mathcal{E}^i}(t)\cdot\langle h,t(h)\rangle_H\,dt.
		\end{align*}
		Thus $\langle e,(\int_G s(T)\,ds)(e)\rangle_{\mathcal{E}^i}$ is a positive operator on $H$ for all unitary representations of $G$, where we let $f\in C_c^\infty(G)$ act on $H$ by $f(h)\coloneqq\int_G f(g)g(h)\,dg.$ It follows that $\langle e,(\int_G s(T)\,ds)(e)\rangle_{\mathcal{E}^i}$ is a positive element of $C^*(G)$.
	\end{proof}
	\begin{proof}[Proof of Proposition~\ref{prop:boundedhilbert}] 
		Let $\mathfrak{c}$ be a cut-off function on $M$. The operator $A_1\coloneqq(\mathfrak{c}A^*A+A^*A\mathfrak{c})/2$ is bounded on $H^i(E)$ by $\norm{A}^2\norm{\mathfrak{c}}$, where $\norm{A}$ is the norm of $A:H^i(E)\rightarrow H^j(E)$. Note that $A_1$ is self-adjoint with compactly supported Schwartz kernel. Let $\mathfrak{c}_1$ be a non-negative, compactly supported function on $M$, identically $1$ on the support of the kernel of $A_1$. Then the operator $A_2\coloneqq \mathfrak{c}_1^2\norm{A}^2\norm{\mathfrak{c}}-A_1$ is positive, bounded and has Schwartz kernel with compact support. Now let $M_0$ be the least upper bound over all functions $\mathfrak{c}_1$ satisfying the above conditions of the function $x\mapsto\int_G g(\mathfrak{c}_1^2)(x)\,dg$ on $M$. Note that $M_0$ depends only on the support of $\mathfrak{c}A^*A+A^*A\mathfrak{c}$. Since $\int_G g(A_1)\,dg = A^*A$, the previous lemma applied to $A_2$ shows that for any $e\in C_c^\infty(E)$,
		$$\bigg\langle e,\left(\int_G g(A_2)\,dg\right)e\bigg\rangle_{\mathcal{E}^i}=\left(\int_Gg(\mathfrak{c}_1^2)\norm{A}^2\norm{\mathfrak{c}}\,dg\right)\langle e,e\rangle_{\mathcal{E}^i} - \langle e,A^*A(e)\rangle_{\mathcal{E}^i}$$
		is positive in $C^*(G)$. Hence
		$\langle A(e),A(e)\rangle_{\mathcal{E}^j}=\langle e,A^*A(e)\rangle_{\mathcal{E}^i}\leq M_0\norm{A}^2\norm{\mathfrak{c}}\langle e,e\rangle_{\mathcal{E}^{i}}\in C^*(G)$, and $A$ extends to an operator on all of $\mathcal{E}^i$. Similarly, $A^*\colon H^j(E)\rightarrow H^i(E)$ defines a bounded operator $\mathcal{E}^j(E)\rightarrow\mathcal{E}^i(E)$ that one checks is the adjoint of $A$.
	\end{proof}
	\begin{proposition}
		\label{prop:Bbounded}
		Let $(G,M,E)$ be a (not necessarily cocompact) $G$-triple with $B$ as above. For $j\geq 0$, $B$ defines an element of $\mathcal{L}(\mathcal{E}^{j+1},\mathcal{E}^j)$.
	\end{proposition}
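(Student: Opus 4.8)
The plan is to treat the two claims — boundedness and adjointability of $B\colon\mathcal{E}^{j+1}\to\mathcal{E}^j$ — separately. Boundedness is immediate: for $e\in C_c^\infty(E)$, since $B^k(Be)=B^{k+1}e$,
$$\langle Be,Be\rangle_j=\sum_{k=0}^j\langle B^{k+1}e,B^{k+1}e\rangle_0=\sum_{k=1}^{j+1}\langle B^k e,B^k e\rangle_0\leq\sum_{k=0}^{j+1}\langle B^k e,B^k e\rangle_0=\langle e,e\rangle_{j+1},$$
the inequality holding in $C^*(G)$ since each summand lies in $C^*(G)_+$ (Lemma \ref{lem:Positivityofi}). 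Thus $\norm{Be}_j\leq\norm{e}_{j+1}$ on the dense submodule $C_c^\infty(E)\subseteq\mathcal{E}^{j+1}$, so $B$ extends to a norm-decreasing operator $\mathcal{E}^{j+1}\to\mathcal{E}^j$; this part uses nothing beyond the definitions.

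For adjointability I would factor $B$ through $\mathcal{E}^0$ using the functional calculus of the regular self-adjoint operator $B$ on $\mathcal{E}^0$. Writing $q_i(t)=1+t+\cdots+t^i$, one checks by moving powers of $B$ across the inner product (formal self-adjointness together with $G$-invariance) that $\langle e_1,e_2\rangle_i=\langle q_i(B^2)^{1/2}e_1,q_i(B^2)^{1/2}e_2\rangle_0$ for $e_1,e_2\in C_c^\infty(E)$; combined with the identification $\mathcal{E}^i\cong\tn{dom}(\overline{B^i})=\tn{dom}((1+B^2)^{i/2})$ this realises $\mathcal{E}^i$ as $\tn{dom}(q_i(B^2)^{1/2})$ with norm $\norm{\,\cdot\,}_i=\norm{q_i(B^2)^{1/2}\,\cdot\,}_0$. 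Since $q_i(B^2)^{1/2}\geq 1$ it has bounded inverse $q_i(B^2)^{-1/2}\in\mathcal{L}(\mathcal{E}^0)$, so $q_i(B^2)^{1/2}\colon\mathcal{E}^i\to\mathcal{E}^0$ is a unitary isomorphism of Hilbert $C^*(G)$-modules, in particular adjointable with adjoint $q_i(B^2)^{-1/2}$. Now set $\psi(t):=t\,q_j(t^2)^{1/2}q_{j+1}(t^2)^{-1/2}$; since $\psi(t)^2=(t^2+\cdots+t^{2j+2})/(1+\cdots+t^{2j+2})\leq 1$, $\psi$ is a bounded continuous real function, so $\psi(B)\in\mathcal{L}(\mathcal{E}^0)$ is self-adjoint. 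Functional calculus then yields the factorisation $B=q_j(B^2)^{-1/2}\,\psi(B)\,q_{j+1}(B^2)^{1/2}$, valid first on $C_c^\infty(E)$ and then on all of $\mathcal{E}^{j+1}$ by boundedness of the three factors. This exhibits $B$ as a composite $\mathcal{E}^{j+1}\to\mathcal{E}^0\to\mathcal{E}^0\to\mathcal{E}^j$ of adjointable maps, so $B\in\mathcal{L}(\mathcal{E}^{j+1},\mathcal{E}^j)$, with adjoint $q_{j+1}(B^2)^{-1/2}\psi(B)q_j(B^2)^{1/2}$. (The case $j=0$ recovers the familiar $B=B(1+B^2)^{-1/2}\circ(1+B^2)^{1/2}$, a useful sanity check.)

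The purely algebraic parts — commuting $B$'s past the pairing, the elementary bound $\psi^2\leq 1$, and the commuting-operator identity underlying the factorisation — are routine. The step I expect to be the main obstacle to a fully rigorous write-up is the identification of the abstract completion $\mathcal{E}^i$ with the functional-calculus domain $\tn{dom}(q_i(B^2)^{1/2})$ carrying the stated norm; equivalently, that $\tn{dom}(\overline{B^i})=\tn{dom}((1+B^2)^{i/2})$, i.e.\ that $C_c^\infty(E)$ is a core for $(1+B^2)^{i/2}$. This is precisely what essential self-adjointness and regularity of $B$ on $\mathcal{E}^0$ (and, for incomplete $M$, the substitute established in subsection \ref{subsec:incomplete}) provide; granting it, the factorisation argument is formal.
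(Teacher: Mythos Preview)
Your argument is correct, but it follows a genuinely different route from the paper's. The paper proves adjointability by a local-to-global construction: it covers $M$ by cocompact $G$-stable open sets $U_k$, takes a $G$-invariant partition of unity $\{\rho_k\}$, embeds each $\overline{U_k}$ in its double (a closed cocompact $G$-manifold), and then invokes Kasparov's Theorem~5.8 on each piece to obtain local inverses $((B^2+1)_{U_k})^{-1}\in\mathcal{L}(\mathcal{E}^l_{U_k},\mathcal{E}^{l+2}_{U_k})$; the adjoint of $B$ is then assembled explicitly as $\sum_k B_{U_k}((B^2+1)_{U_k})^{-1}\rho_k$. By contrast, you work globally: you identify $\mathcal{E}^i$ with the domain of $q_i(B^2)^{1/2}$ and exhibit $B$ as a composite of three adjointable maps via functional calculus.

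The trade-off is exactly the one you flag. The paper's argument is logically self-contained at this point in Section~\ref{sec:G-Sobolev}: it uses only local regularity on the cocompact doubles (Kasparov's theorem), and in particular does not assume completeness of $M$. Your argument is cleaner and more conceptual, but it forward-references the global regularity and essential self-adjointness of $B$ on $\mathcal{E}^0$ established only in subsections~\ref{subsec:essentialregularity}--\ref{subsec:incomplete}. There is no circularity --- those later proofs rely on Proposition~\ref{prop:boundedhilbert} and the cocompact pseudodifferential calculus, not on Proposition~\ref{prop:Bbounded} --- so your approach is legitimate, just differently ordered. One small caveat: subsection~\ref{subsec:essentialregularity} assumes $M$ complete, so your proof as written needs that hypothesis, whereas the paper's partition-of-unity argument does not.
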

	\begin{proof}
		Boundedness is clear. Now since $G$ acts on $M$ properly, there exists a countable, locally finite open covering $\mathcal{U}$ of $M$ by $G$-stable open subsets $U_k$, $k\in\mathbb{N}$, such that for each $k$, $U_k$ is cocompact and $\overline{U_k}$ is a manifold with boundary (see \cite{Neeb} Lemma IV.4. for such an exhaustive sequence in the non-equivariant case). By \cite{Palais} (see also \cite{PalaisBook} Theorem 5.2.5) one can find a $G$-invariant partition of unity $\{\rho_k\}$ subordinate to $\mathcal{U}$. Now one can form the modules $\mathcal{E}^i$ by first forming analogous local modules $\mathcal{E}^i_{U_k}$ on $U_k$, where $U_k$ is considered as an open $G$-submanifold of a cocompact $G$-manifold without boundary, namely the double $\overline{U_k}^+$ of the cocompact $G$-manifold with boundary $\overline{U_k}$. (This can be done since there exists a $G$-equivariant collar neighbourhood of $\partial\overline{U_k}$ inside $\overline{U_k}$, by Theorem 3.5 of \cite{Kankaanrinta}.) One can then use $\{\rho_k\}$ to form the inner product on $\mathcal{E}^i$. For example, in the case of $\mathcal{E}^0$, we have
		\begin{align*}
		\langle s,t\rangle_{\mathcal{E}^0}(g)&=\mu(g)^{-1/2}\sum_{k}\langle\sqrt{\rho_k}s,\sqrt{\rho_k}gt\rangle_{L^2\left(E|_{U_k}\right)}\\&=\sum_{k}\langle\sqrt{\rho_k}s,\sqrt{\rho_k}t\rangle_{\mathcal{E}^0_{U_k}}(g),
		\end{align*}
		for $s,t\in C_c^\infty(E)$ and $g\in G$, where we have used $G$-invariance of $\rho_k$. By Proposition \ref{prop:boundedhilbert}, the operator $B$ restricted to sections supported on each neighbourhood $U_k$ is in $\mathcal{L}(\mathcal{E}^{j+1}_{U_k},\mathcal{E}^j_{U_k}).$ By \cite{Kasparov} Theorem 5.8, the local inverse $((B^2+1)_{U_k})^{-1}\colon\mathcal{E}^l_{U_k}\rightarrow\mathcal{E}^{l+2}_{U_k}$ exists for all $l\geq 0$. One can verify that $B_{U_k}((B^2+1)_{U_k})^{-1}$ is the adjoint of $B_{U_k}$, and that $\sum_{k}B_{U_k}((B^2+1)_{U_k})^{-1}\rho_k$ is the adjoint of $B$.
	\end{proof}
	\begin{corollary}
		\label{prop:Bibounded}
		Let $(G,M,E)$ be a $G$-triple with $B$ as above. For $j\geq 0$, $B^i$ defines an element of $\mathcal{L}(\mathcal{E}^{j+i},\mathcal{E}^j)$.
	\end{corollary}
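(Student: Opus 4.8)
The plan is to prove the statement by induction on $i$, using Proposition \ref{prop:Bbounded} as the engine and keeping $j\geq 0$ as a fixed parameter throughout. For the base case $i=0$ there is nothing to do: $B^0$ is the identity operator, which lies in $\mathcal{L}(\mathcal{E}^j)$ with itself as adjoint. The case $i=1$ is exactly Proposition \ref{prop:Bbounded}.

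For the inductive step I would assume that, for some $i\geq 1$, the closure $\overline{B^i}$ of the differential operator $B^i$ on the core $C_c^\infty(E)$ defines an element of $\mathcal{L}(\mathcal{E}^{k+i},\mathcal{E}^k)$ for every $k\geq 0$. Applying Proposition \ref{prop:Bbounded} with $k=j+i$ in place of $j$ gives $\overline{B}\in\mathcal{L}(\mathcal{E}^{j+i+1},\mathcal{E}^{j+i})$. Since $\mathcal{L}(\,\cdot\,,\,\cdot\,)$ is closed under composition and under taking adjoints, the composite $\overline{B^i}\circ\overline{B}$ is a bounded adjointable operator $\mathcal{E}^{j+i+1}\to\mathcal{E}^j$, with adjoint $(\overline{B})^*\circ(\overline{B^i})^*$.

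The one point I would be careful about is the identification $\overline{B^{i+1}}=\overline{B^i}\circ\overline{B}$, i.e. checking that the composite of the two closures really does compute the closure of the differential operator $B^{i+1}$ viewed as a densely defined operator from $\mathcal{E}^{j+i+1}$ to $\mathcal{E}^j$. On $C_c^\infty(E)$ one has the pointwise identity $B^{i+1}e=B^i(Be)$, and $\overline{B}$, $\overline{B^i}$ restrict to $e\mapsto Be$ and $u\mapsto B^i u$ respectively on smooth compactly supported sections, so $\overline{B^i}\circ\overline{B}$ agrees with $B^{i+1}$ on $C_c^\infty(E)$. Since $C_c^\infty(E)$ is dense in $\mathcal{E}^{j+i+1}$ (it is the space whose completion defines that module) and a bounded operator between Hilbert modules is determined by its restriction to a dense submodule, $\overline{B^i}\circ\overline{B}$ is the unique bounded extension of $B^{i+1}|_{C_c^\infty(E)}$; hence $\overline{B^{i+1}}=\overline{B^i}\circ\overline{B}\in\mathcal{L}(\mathcal{E}^{j+i+1},\mathcal{E}^j)$, closing the induction. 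This bookkeeping is the only non-formal ingredient; everything else is immediate from Proposition \ref{prop:Bbounded} together with closure of $\mathcal{L}$ under composition and adjunction. One could equally well factor $B^{i+1}=B\circ B^i$ and run the same comparison — the two factorisations agree on the common core, so both compute $\overline{B^{i+1}}$.
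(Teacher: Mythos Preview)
Your proof is correct and is essentially the same as the paper's intended argument: the corollary is stated without proof, as it follows immediately from Proposition~\ref{prop:Bbounded} by induction and composition of bounded adjointable operators. Your extra care in checking that $\overline{B^{i+1}}=\overline{B^i}\circ\overline{B}$ on the common core is appropriate bookkeeping that the paper leaves implicit.
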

	\begin{proposition}
		\label{prop:multiplicationbounded}
		Let $(G,M,E)$ be a $G$-triple. Then multiplication by a $G$-invariant function $f\colon M\rightarrow\mathbb{C}$ for which $\norm{f}_\infty<\infty$ is an element of $\mathcal{L}(\mathcal{E}^i,\mathcal{E}^0)$ for all $i\geq 0$.
	\end{proposition}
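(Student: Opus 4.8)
The plan is to treat boundedness and adjointability separately, in each case reducing to the module $\mathcal{E}^0$ and the inclusion $\iota_i\colon\mathcal{E}^i\hookrightarrow\mathcal{E}^0$. For boundedness, note first that for $e\in C_c^\infty(E)$ we have $\langle e,e\rangle_i-\langle e,e\rangle_0=\sum_{k=1}^i\langle B^ke,B^ke\rangle_0$, a sum of positive elements of $C^*(G)$, so $\langle e,e\rangle_0\le\langle e,e\rangle_i$ and hence $\norm{e}_0\le\norm{e}_i$; thus it is enough to bound multiplication by $f$ as an operator on $\mathcal{E}^0$ by $\norm{f}_\infty$, for which I would use a square-root trick. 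Since $f$ is $G$-invariant, $g(fe)(x)=f(x)(ge)(x)$, so the defining formula for $\langle\cdot,\cdot\rangle_0$ gives
\[
\langle fe,fe\rangle_0(g)=\mu(g)^{-1/2}\int_M|f(x)|^2\langle e(x),(ge)(x)\rangle_E\,d\mu .
\]
Fix $c>\norm{f}_\infty$ and set $h_c:=\sqrt{\,c^2-|f|^2\,}$, a bounded $G$-invariant function with $|f|^2+h_c^2\equiv c^2$; the same computation gives $\langle fe,fe\rangle_0+\langle h_ce,h_ce\rangle_0=c^2\langle e,e\rangle_0$. Since $\langle h_ce,h_ce\rangle_0$ is positive in $C^*(G)$ (Lemma \ref{lem:Positivityofi}; here $h_ce\in C_c(E)\subseteq\mathcal{E}^0$, and $h_ce\in C_c^\infty(E)$ if $f$ is smooth), we get $\langle fe,fe\rangle_0\le c^2\langle e,e\rangle_0$, and letting $c\downarrow\norm{f}_\infty$, $\langle fe,fe\rangle_0\le\norm{f}_\infty^2\langle e,e\rangle_0$. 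Hence $\norm{fe}_0\le\norm{f}_\infty\norm{e}_i$, so multiplication by $f$ extends to a bounded $C^*(G)$-linear map $T_f\colon\mathcal{E}^i\to\mathcal{E}^0$ of norm $\le\norm{f}_\infty$; the same argument applied to $\bar f$ gives $T^0_{\bar f}\in\mathcal{L}(\mathcal{E}^0)$.

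For adjointability, factor $T_f=T^0_f\circ\iota_i$, where $T^0_f\colon\mathcal{E}^0\to\mathcal{E}^0$ is multiplication by $f$. A direct computation shows $\langle fe,e'\rangle_0=\langle e,\bar fe'\rangle_0$ for $e,e'\in C_c^\infty(E)$ (again using $G$-invariance), so $T^0_f$ is adjointable with $(T^0_f)^*=T^0_{\bar f}$, and it remains to show $\iota_i$ is adjointable. For this I would use that $\overline{B}$ is self-adjoint and regular (subsections \ref{subsec:essentialregularity}--\ref{subsec:incomplete}): with $p(x)=1+x^2+\cdots+x^{2i}$ and $P:=p(\overline{B})$, the operator $P\ge1$ is self-adjoint and invertible, $P^{-1}=(1/p)(\overline{B})\in\mathcal{L}(\mathcal{E}^0)$ has norm $\le1$, and since $x^{2i}/p(x)$ is bounded, $P^{-1}$ maps $\mathcal{E}^0$ into $\textnormal{dom}(\overline{B^{2i}})=\mathcal{E}^{2i}\subseteq\mathcal{E}^i$. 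One checks $\langle e,w\rangle_i=\langle Pe,w\rangle_0$ whenever $e\in\mathcal{E}^{2i}$ and $w\in\mathcal{E}^i$ (verified on $C_c^\infty(E)$ by formal self-adjointness of $B$, then extended by density), which yields on the one hand $\norm{P^{-1}e'}_i^2=\norm{\langle e',P^{-1}e'\rangle_0}_{C^*(G)}\le\norm{e'}_0^2$ (so $P^{-1}\colon\mathcal{E}^0\to\mathcal{E}^i$ is bounded), and on the other hand, for $e\in C_c^\infty(E)$ and $e'\in\mathcal{E}^0$, $\langle e,P^{-1}e'\rangle_i=\langle Pe,P^{-1}e'\rangle_0=\langle e,e'\rangle_0=\langle\iota_ie,e'\rangle_0$; letting $e$ range over $\mathcal{E}^i$ by continuity, $\iota_i^*=P^{-1}$. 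Therefore $T_f=T^0_f\circ\iota_i\in\mathcal{L}(\mathcal{E}^i,\mathcal{E}^0)$, with $T_f^*=P^{-1}\circ T^0_{\bar f}$.

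The one genuinely delicate point is the adjointability of the inclusion $\mathcal{E}^i\hookrightarrow\mathcal{E}^0$: a bounded module map between Hilbert $C^*(G)$-modules need not be adjointable, and this is precisely where the self-adjointness and regularity of $\overline{B}$ enter (equivalently, that the graph of $(\overline{B},\dots,\overline{B^i})$ is an orthogonally complemented submodule of $(\mathcal{E}^0)^{\oplus(i+1)}$). If one prefers to avoid the functional calculus, there is an alternative parallel to the proof of Proposition \ref{prop:Bbounded}: localise on a countable cocompact cover $\{U_k\}$ with $G$-invariant partition of unity $\{\rho_k\}$, use Proposition \ref{prop:boundedhilbert} to place multiplication by $f$ in $\mathcal{L}(\mathcal{E}^i_{U_k},\mathcal{E}^0_{U_k})$ on each $U_k$, and patch the local adjoints into $\sum_k f^{*}_{U_k}\rho_k$; there the partition-of-unity bookkeeping replaces the functional calculus as the fiddly step.
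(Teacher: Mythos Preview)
Your proof is correct. For boundedness you are more careful than the paper, which simply asserts the norm inequality $\norm{\langle fe,fe\rangle_{\mathcal{E}^0}}_{C^*(G)}\le C^2\norm{\langle e,e\rangle_{\mathcal{E}^0}}_{C^*(G)}$; your square-root trick with $h_c$ actually establishes the stronger $C^*(G)$-inequality $\langle fe,fe\rangle_0\le\norm{f}_\infty^2\langle e,e\rangle_0$ that underlies it.

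For adjointability the routes genuinely differ. The paper takes the Hilbert-space adjoint $f^*\colon H^0(E)\to H^i(E)$, observes it is $G$-invariant, and verifies the adjoint identity $\langle fe_1,e_2\rangle_{\mathcal{E}^0}(g)=\langle e_1,f^*e_2\rangle_{\mathcal{E}^i}(g)$ directly from the defining formulas; it then cites Lance to conclude boundedness. You instead factor $T_f=T_f^0\circ\iota_i$ and construct $\iota_i^*=P^{-1}$ explicitly via the functional calculus of the regular self-adjoint operator $\overline{B}$, with $P=\sum_{k=0}^{i}\overline{B}^{2k}$. The two approaches produce the same adjoint (indeed the Hilbert-space adjoint $f^*$ is exactly $P^{-1}\circ T_{\bar f}^0$), but yours stays entirely inside the Hilbert-module framework and makes transparent the one nontrivial input, namely regularity of $\overline{B}$; the paper's argument is shorter but leans on the ambient Hilbert-space structure and leaves implicit the step that $f^*e_2$ actually lies in $\mathcal{E}^i$ rather than merely in $H^i(E)$. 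Your alternative localisation sketch at the end is yet a third route, closer in spirit to the proof of Proposition~\ref{prop:Bbounded}.
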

	\begin{proof}
		Boundedness follows from 
		$$\norm{\langle fe,fe\rangle_{\mathcal{E}^0}}_{C^*(G)} \leq C^2\norm{\langle e,e\rangle_{\mathcal{E}^0}}_{C^*(G)}\leq C^2\norm{\langle e,e\rangle_{\mathcal{E}^i}}_{C^*(G)}.$$ Now let $f^*\colon H^0(E)\rightarrow H^i(E)$ be the adjoint of $f\colon H^i(E)\rightarrow H^0(E)$. Since $f\colon H^i(E)\rightarrow H^0(E)$ is bounded and $G$-invariant, one sees that $f^*$ is $G$-invariant. For $e_1,e_2\in C_c^\infty(E)$ and $g\in G$, we have
		\begin{align*}
		\langle fe_1,e_2\rangle_{\mathcal{E}^0} (g)&=\mu(g)^{-1/2}\langle fe_1,g(e_2)\rangle_{H^0(E)}\\
		&=\langle e_1,f^*e_2\rangle_{\mathcal{E}^i}(g).
		\end{align*}
		Hence $f^*$ is the adjoint for $f\colon\mathcal{E}^i\rightarrow\mathcal{E}^0$ and therefore bounded \cite{Lance}.
	\end{proof}

	\subsection{An Equivariant Rellich Lemma} Recall the following non-compact analogue of the Rellich lemma:
	\begin{lemma}
		\label{lem:noncompactrellich} Let $M$ be a non-compact manifold and $f\colon M\rightarrow\mathbb{C}$ a compactly supported function. Then multiplication by $f$ is a compact operator $H^s(M)\rightarrow H^t(M)$ if $s>t$.
	\end{lemma}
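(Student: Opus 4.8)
The plan is to reduce the statement to the classical Rellich lemma on the torus by localising, so that the non-compactness of $M$ plays no role. Cover the compact set $\operatorname{supp}(f)$ by finitely many coordinate charts $U_1,\dots,U_N$, each diffeomorphic to an open subset of $\R^n$, and choose smooth functions $\phi_j$ supported in $U_j$ with $\sum_{j}\phi_j\equiv 1$ on $\operatorname{supp}(f)$. Then multiplication by $f$ equals the finite sum of multiplications by the compactly supported functions $\phi_j f$; since a finite sum of compact operators is compact, we may assume from the start that $f$ is supported in a single chart $U$, which we identify with an open set $V\subseteq\R^n$. (We may also assume $f$ smooth, which is the case relevant to the applications and makes the mapping properties below automatic; for rougher $f$ one additionally needs that multiplication by $f$ be bounded on the Sobolev scales involved.)

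Next I would cut down to a compact piece of Euclidean space and then to a torus. Pick $\psi\in C_c^\infty(U)$ that is identically $1$ on $\operatorname{supp}(f)$, so that multiplication by $f$ equals multiplication by $\psi$ followed by multiplication by $f$ (as $\psi\equiv 1$ on $\operatorname{supp}(f)$), and after rescaling the chart assume $K:=\operatorname{supp}(\psi)$ lies in the interior of a fundamental domain of the torus $T^n=\R^n/(2\pi\Z)^n$. Using that multiplication by a smooth compactly supported function is bounded on every $H^s$, together with the identification (via the chart) of distributions in $H^s$ supported in $K$ with periodic distributions in $H^s(T^n)$: multiplication by $\psi$ is bounded from $H^s(M)$ into $H^s(T^n)$; multiplication by $f$, extended by zero to a smooth function on $T^n$, is bounded from $H^t(T^n)$ to $H^t(T^n)$ with image supported in $K$; and transporting that image back through the chart is bounded into $H^t(M)$. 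Hence, after the single-chart reduction, multiplication by $f\colon H^s(M)\to H^t(M)$ is, up to bounded operators on either side, the inclusion $H^s(T^n)\hookrightarrow H^t(T^n)$, and it suffices to show this inclusion is compact when $s>t$.

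For this final step I would run the standard Fourier-series argument. Writing $\norm{u}_{H^s(T^n)}^2=\sum_{k\in\Z^n}(1+|k|^2)^s\,|\hat u(k)|^2$, let $(u_m)$ be bounded in $H^s(T^n)$. For each fixed $k$ the coefficients $\hat u_m(k)$ are bounded, since $|\hat u_m(k)|\le(1+|k|^2)^{-s/2}\norm{u_m}_{H^s(T^n)}$, so a diagonal argument extracts a subsequence along which every Fourier coefficient converges. For this subsequence, split $\sum_{k}(1+|k|^2)^t\,|\hat u_m(k)-\hat u_{m'}(k)|^2$ into the low frequencies $|k|\le R$, a finite sum that is small for $m,m'$ large by coefficientwise convergence, and the high frequencies $|k|>R$, which are bounded by $(1+R^2)^{t-s}$ times the uniformly bounded squared $H^s$-norms and hence small once $R$ is large. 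This shows the subsequence is Cauchy in $H^t(T^n)$, giving compactness.

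I do not expect a genuine obstacle, as this is classical; the only real content is that the statement holds verbatim on a non-compact $M$, which the localisation makes transparent. The points requiring a little care are the bookkeeping in reducing to a single chart and then to the torus, and the use of the diffeomorphism-invariance of $H^s$ over the relevant range of $s$ — concretely, that a chart diffeomorphism induces a bounded isomorphism between the subspace of $H^s(M)$ of distributions supported in a fixed compact subset of the chart and the corresponding compactly supported $H^s$-distributions on $\R^n$; for non-integer $s$ this is the standard but slightly technical fact that coordinate changes act boundedly on local Sobolev spaces.
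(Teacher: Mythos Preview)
Your argument is correct and is the standard localisation-plus-torus proof of the classical Rellich lemma on a non-compact manifold. Note, however, that the paper does not actually prove this lemma: it is stated with the preface ``Recall the following non-compact analogue of Rellich's lemma'' and treated as known. The torus computation that appears immediately afterwards in the paper---writing the inclusion $H^s(T^n)\hookrightarrow H^t(T^n)$ as a sum of rank-one operators $\sum_m\theta_{k_t\phi_m,k_s\phi_m}$---is not a proof of Lemma~\ref{lem:noncompactrellich} but rather preparation for the $G$-equivariant version (Lemma~\ref{lem:G-Rellich}), where one averages these rank-one operators over $G$. So there is nothing to compare your proof against; you have supplied a proof where the paper simply cites the result.
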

	
	Now suppose $(G,M,E)$ is a non-cocompact $G$-triple with operator $B$, and let $H^i(E)$ denote the completion of $C_c^\infty(E)$ with respect to the inner product $\langle B^i e_1,B^i e_2\rangle_{L^2(E)}$, for $e_1,e_2\in C_c^\infty(E)$.
	
	One can verify, from the definition of rank-one operators between Hilbert modules (see \cite{Manuilov}) that a rank-one element of $\mathcal{K}(\mathcal{E}^s,\mathcal{E}^t)$ can be constructed by taking the \emph{$G$-average} of a rank-one operator $H^s(E)\rightarrow H^t(E)$ in the sense of Hilbert spaces. More precisely, if $e_1$ and $e_2$ are compactly supported smooth sections of $E$, the $G$-average of the rank-one operator $\theta_{e_1,e_2}:H^s(E)\rightarrow H^t(E)$ is defined to be the operator
	$$\int_G g(\theta_{e_1,e_2})\,dg:\mathcal{E}^s\rightarrow\mathcal{E}^t$$
	that takes an element $e\in C_c^\infty(E)$ to the element of $C_c^\infty(E)$ given by
	\begin{align*}
	\left(\int_G g(\theta_{e_1,e_2})\,dg\right)(e)(x)&:=\int_M\left(\int_G\theta_{g(e_1)(x),g(e_2)(y)}\,dg\right)e(y)\,d\mu\\
	&=\int_M\int_G g(e_1)(x)\langle gB^s e_2(y),B^s e(y)\rangle_{E_y}\,dg\,d\mu.
	\end{align*}
	More generally, if $\theta:H^s(E)\rightarrow H^t(E)$ is the sum of finitely many such rank-one operators, we can make sense of the $G$-average of $\theta$, denoted by
	$$\int_G g(\theta)\,dg.$$
	We now prove the following equivariant version of the Rellich lemma, which will be important our subsequent analysis.
	\begin{theorem}
		\label{thm:G-Rellich}
		Let $f$ be a cocompactly supported $G$-invariant function. Then multiplication by $f$ is an element of $\mathcal{K}(\mathcal{E}^{s},\mathcal{E}^{t})$ for $s>t$.
	\end{theorem}
	\begin{proof}
		First observe that, by Proposition \ref{prop:multiplicationbounded}, multiplication by $f$ is an element of $\mathcal{L}(\mathcal{E}^i,\mathcal{E}^0)$. To see that it is compact, let $\mathfrak{c}$ be a cut-off function. By Lemma \ref{lem:noncompactrellich}, multiplication by $\mathfrak{c}f$ is a compact operator $H^s(E)\rightarrow H^t(E)$ and hence the operator-norm limit of a sequence of finite-rank operators $(\theta_i)_{i\in\mathbb{N}}$. We may assume that the $\theta_i$ have continuous Schwartz kernels supported within a fixed compact subset $L\subseteq M\times M$ of diameter $r$. The averaged operator $\int_G g(\theta_i)\,dg$ is still bounded $H^s(E)\rightarrow H^t(E)$, has Schwartz kernel supported within an $r$-ball of the diagonal, and we have 
		$$\int_G g(\theta_i)\,dg\rightarrow\int_G g(\mathfrak{c}f)\,dg=f$$ 
		in the operator norm on $B(H^s,H^t)$. Since both $f$ and $\int_G g(\theta_i)\,dg$ have properly and cocompactly supported Schwartz kernels, Proposition \ref{prop:boundedhilbert} applies and shows that the convergence also holds in the norm of $\mathcal{L}(\mathcal{E}^i,\mathcal{E}^j)$.
	\end{proof}
	\vspace{1cm}
	\section{$G$-invertibility at Infinity}
	\label{sec:G-Invertible}
	In order to formulate index theory, we introduce a notion of invertibility at infinity \cite{Bunke} for the non-cocompact $G$-setting. Let $(G,M,E)$ be a $\mathbb{Z}_2$-graded non-cocompact $G$-triple with operator $B$. It follows from our previous results that $B^2\in\mathcal{L}(\mathcal{E}^2,\mathcal{E}^0).$ Moreover, 
	Proposition \ref{prop:multiplicationbounded} gives:
	\begin{lemma}
		Let $f\colon M\rightarrow\mathbb{C}$ be a continuous $G$-invariant function for which $\norm{f}_\infty<\infty$. Then $B^2+f\in\mathcal{L}(\mathcal{E}^2,\mathcal{E}^0)$.
	\end{lemma}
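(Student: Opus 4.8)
The argument is a short assembly of results already established. The plan is to write $B^2+f$ as a sum of two operators, each of which has been shown to define an element of $\mathcal{L}(\mathcal{E}^2,\mathcal{E}^0)$, and then to use that $\mathcal{L}(\mathcal{E}^2,\mathcal{E}^0)$ is a vector space, in particular closed under addition.

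First I would dispose of the $B^2$ term: by Proposition \ref{prop:Bbounded} one has $B\in\mathcal{L}(\mathcal{E}^2,\mathcal{E}^1)$ and $B\in\mathcal{L}(\mathcal{E}^1,\mathcal{E}^0)$, so the composition $B^2$ lies in $\mathcal{L}(\mathcal{E}^2,\mathcal{E}^0)$; equivalently, this is Corollary \ref{prop:Bibounded} with $i=2$ and $j=0$. Next I would dispose of the $f$ term: since $f$ is $G$-invariant with $\norm{f}_\infty<\infty$, the final Proposition of the previous subsection shows that multiplication by $f$ defines an element of $\mathcal{L}(\mathcal{E}^i,\mathcal{E}^0)$ for every $i\geq 0$, and in particular for $i=2$. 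It is worth noting that only the bound $\norm{f}_\infty<\infty$ enters there, not any smoothness of $f$, which is precisely why continuity of $f$ is enough for the present statement.

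To conclude, observe that on the common initial domain $C_c^\infty(E)$ one has $(B^2+f)e = B^2 e + f e$, so the operator $B^2+f$ coincides with the sum of the two adjointable extensions obtained above; hence $B^2+f\in\mathcal{L}(\mathcal{E}^2,\mathcal{E}^0)$, with adjoint the sum of the two adjoints. I do not expect any genuine obstacle here: the statement is a bookkeeping consequence of the boundedness and adjointability results already proved. The only point one must keep straight is to invoke the versions of those results with domain $\mathcal{E}^2$ and codomain $\mathcal{E}^0$, rather than the versions landing in the intermediate module $\mathcal{E}^1$.
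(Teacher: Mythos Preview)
Your proposal is correct and matches the paper's approach exactly: the paper does not give a separate proof but simply remarks that $B^2\in\mathcal{L}(\mathcal{E}^2,\mathcal{E}^0)$ ``follows from our previous results'' and then states the lemma as a consequence, which is precisely your decomposition into the $B^2$ piece (Corollary~\ref{prop:Bibounded}) plus the multiplication-by-$f$ piece (the unnamed Proposition immediately following it). Your write-up is in fact more explicit than the paper's own treatment.
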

	Next recall that an unbounded operator $A$ on a Hilbert module $\mathcal{M}$ is said to be {\it regular} if its graph is orthogonally complementable in $\mathcal{M}\oplus\mathcal{M}$. It can be shown that $A$ is both regular and self-adjoint if and only if there exists $\mu\in i\mathbb{R}$ such that both $A\pm\mu\colon\mathcal{M}\rightarrow\mathcal{M}$ have dense range \cite{Ebert}.
	\begin{definition}
		Let $(G,M,E)$ be a $\mathbb{Z}_2$-graded $G$-triple equipped with a regular, self-adjoint operator $B$. Then $B\colon\mathcal{E}^0\rightarrow\mathcal{E}^0$ is said to be \textit{$G$-invertible at infinity} if there exists a non-negative, $G$-invariant, cocompactly supported smooth function $f$ on $M$ such that $B^2+f\in\mathcal{L}(\mathcal{E}^2,\mathcal{E}^0)$ has an inverse $(B^2+f)^{-1}$ in $\mathcal{L}(\mathcal{E}^0,\mathcal{E}^2)$.
	\end{definition}
	\begin{remark}
		When the acting group $G$ is trivial, we will use the term \textit{invertible at infinity}; this is consistent with the usage in \cite{Bunke}. 
	\end{remark}
	\subsection{Equivariant Fredholmness}
	To prove that $G$-invertible-at-infinity operators have an equivariant index, we adapt Bunke's estimates from \cite{Bunke}. The difference in our approach is that the Hilbert $C^*(G)$-module structure of $\mathcal{E}^i$ arises from the $G$-action. Still, we find that most of the estimates in \cite{Bunke} carry over to our setting.
	
	The next lemma is a $G$-equivariant analogue of Lemma 1.4 in \cite{Bunke}.
	\begin{lemma}\label{lem:d}
		$$d\coloneqq\inf_{\psi\in\mathcal{E}^2,\norm{\psi}_{\mathcal{E}^0}=1}\left(\norm{B\psi}^2_{\mathcal{E}^0}+\norm{\sqrt{f}\psi}^2_{\mathcal{E}^0}\right)>0.$$
	\end{lemma}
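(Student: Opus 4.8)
The plan is to reduce the statement to a single scalar estimate in $C^*(G)$. First note that, since $\norm{\,\cdot\,}_{\mathcal{E}^0}$ is the Hilbert module norm, for $\psi\in\mathcal{E}^2=\mathrm{dom}(B^2)$ we have $\norm{B\psi}_{\mathcal{E}^0}^2=\norm{\langle B\psi,B\psi\rangle_{\mathcal{E}^0}}_{C^*(G)}=\norm{\langle B^2\psi,\psi\rangle_{\mathcal{E}^0}}_{C^*(G)}$, using self-adjointness of $B$, and likewise $\norm{\sqrt{f}\psi}_{\mathcal{E}^0}^2=\norm{\langle f\psi,\psi\rangle_{\mathcal{E}^0}}_{C^*(G)}$. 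Both $\langle B^2\psi,\psi\rangle$ and $\langle f\psi,\psi\rangle$ are positive elements of $C^*(G)$, so the triangle inequality for the $C^*$-norm yields $\norm{B\psi}_{\mathcal{E}^0}^2+\norm{\sqrt{f}\psi}_{\mathcal{E}^0}^2\ge\norm{\langle(B^2+f)\psi,\psi\rangle_{\mathcal{E}^0}}_{C^*(G)}$. Thus it suffices to bound the right-hand side below by a positive constant independent of $\psi$ with $\norm{\psi}_{\mathcal{E}^0}=1$.

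For this, set $T\coloneqq B^2+f\in\mathcal{L}(\mathcal{E}^2,\mathcal{E}^0)$ and let $S\coloneqq T^{-1}$, which by the hypothesis of $G$-invertibility at infinity lies in $\mathcal{L}(\mathcal{E}^0,\mathcal{E}^2)$; composing with the bounded inclusion $\mathcal{E}^2\hookrightarrow\mathcal{E}^0$ we regard $S$ as an element of $\mathcal{L}(\mathcal{E}^0)$. Since $B$ is regular and self-adjoint and $f$ is a bounded self-adjoint perturbation, $T$ is a positive, invertible, self-adjoint regular operator on $\mathcal{E}^0$ (cf. subsections~\ref{subsec:essentialregularity} and~\ref{subsec:incomplete}); hence $S\in\mathcal{L}(\mathcal{E}^0)$ is positive and injective with range $\mathcal{E}^2$. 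Given $\psi\in\mathcal{E}^2$, write $\psi=S\eta$ with $\eta=T\psi\in\mathcal{E}^0$, so that $\langle T\psi,\psi\rangle_{\mathcal{E}^0}=\langle\eta,S\eta\rangle_{\mathcal{E}^0}$. From $0\le S\le\norm{S}\cdot\mathrm{id}$ in $\mathcal{L}(\mathcal{E}^0)$ we get $S^2\le\norm{S}\,S$ (conjugate the first inequality by $S^{1/2}$), hence $\langle S\eta,S\eta\rangle_{\mathcal{E}^0}=\langle\eta,S^2\eta\rangle_{\mathcal{E}^0}\le\norm{S}\,\langle\eta,S\eta\rangle_{\mathcal{E}^0}$ in $C^*(G)_+$. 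Taking norms gives $\norm{\langle\eta,S\eta\rangle_{\mathcal{E}^0}}_{C^*(G)}\ge\norm{S}^{-1}\norm{S\eta}_{\mathcal{E}^0}^2=\norm{S}^{-1}\norm{\psi}_{\mathcal{E}^0}^2$. Combining with the first paragraph, for any unit $\psi\in\mathcal{E}^2$ we obtain $\norm{B\psi}_{\mathcal{E}^0}^2+\norm{\sqrt{f}\psi}_{\mathcal{E}^0}^2\ge\norm{S}^{-1}$, so $d\ge\norm{(B^2+f)^{-1}}_{\mathcal{L}(\mathcal{E}^0)}^{-1}>0$.

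The only genuinely delicate point is the Hilbert module bookkeeping: the quantity $\norm{B\psi}^2+\norm{\sqrt{f}\psi}^2$ is a sum of two $C^*$-norms rather than the norm of one positive element, so one cannot simply identify it with $\norm{\langle(B^2+f)\psi,\psi\rangle}$, and passing from the hypothesis ``$(B^2+f)^{-1}\in\mathcal{L}(\mathcal{E}^0,\mathcal{E}^2)$'' to a positive adjointable operator on the single module $\mathcal{E}^0$ (so that the functional-calculus inequality $S^2\le\norm{S}S$ applies) requires the self-adjointness and regularity of $B^2+f$ on $\mathcal{E}^0$. Once these are in place, everything reduces to elementary $C^*$-algebra inequalities, exactly as in the non-equivariant estimates of \cite{Bunke}.
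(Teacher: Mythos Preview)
Your proof is correct and follows the same strategy that the paper (tacitly) adopts by deferring to \cite{Bunke}: use invertibility of $B^2+f$ to bound $\langle(B^2+f)\psi,\psi\rangle_{\mathcal{E}^0}$ below, then translate this into the desired estimate. The paper itself gives no proof here, so your write-up is strictly more detailed than what appears.

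Two minor remarks. First, the self-adjointness and positivity of $S=(B^2+f)^{-1}\in\mathcal{L}(\mathcal{E}^0)$ that you need for the inequality $S^2\le\norm{S}S$ follow directly from symmetry of $B^2+f$ on its domain $\mathcal{E}^2$ together with the assumed adjointability of the inverse: for $u,v\in\mathcal{E}^0$ one has $\langle Su,v\rangle=\langle Su,(B^2+f)Sv\rangle=\langle(B^2+f)Su,Sv\rangle=\langle u,Sv\rangle$, and similarly $\langle Su,u\rangle=\langle BSu,BSu\rangle+\langle\sqrt{f}Su,\sqrt{f}Su\rangle\ge 0$. So you do not actually need to invoke regularity of $B^2+f$ at this stage; the hypothesis of $G$-invertibility at infinity already packages everything required. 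Second, your final paragraph correctly flags that in the Hilbert module setting one only gets the inequality $\norm{B\psi}^2+\norm{\sqrt{f}\psi}^2\ge\norm{\langle(B^2+f)\psi,\psi\rangle}$ via the triangle inequality for positive elements, rather than the equality available in the Hilbert space case of \cite{Bunke}; this is exactly the adaptation needed and you handle it cleanly.
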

	\begin{corollary} Let $d$ be as above and $\lambda\in\mathbb{R}$. Then
		$$\norm{(B^2+f+\lambda^2)\psi}_{\mathcal{E}^0} \geq (d+\lambda^2)\norm{\psi}_{\mathcal{E}^0} \qquad \forall \psi\in\mathcal{E}^2.$$
	\end{corollary}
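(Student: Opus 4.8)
The plan is to deduce the estimate from Lemma~\ref{lem:d} together with elementary positivity facts for operators on Hilbert $C^*$-modules, avoiding any appeal to a functional calculus. First I would record that $B^2+f$ is a non-negative, self-adjoint, regular operator on $\mathcal{E}^0$ with domain $\mathcal{E}^2$: indeed $B$ is self-adjoint and regular (subsections~\ref{subsec:essentialregularity} and~\ref{subsec:incomplete}), hence so is $B^2=B^*B$, which is moreover non-negative, and adding the bounded, non-negative, self-adjoint multiplication operator $f$ changes neither the domain, the self-adjointness, nor the regularity. The crucial input is that Lemma~\ref{lem:d} should be read as (or its proof should yield) the \emph{operator} inequality
$$\langle B\psi,B\psi\rangle_{\mathcal{E}^0}+\langle\sqrt{f}\psi,\sqrt{f}\psi\rangle_{\mathcal{E}^0}\ \geq\ d\,\langle\psi,\psi\rangle_{\mathcal{E}^0}\qquad\forall\,\psi\in\mathcal{E}^2$$
in $C^*(G)_+$, i.e. that $T:=B^2+f-d$ is itself a non-negative self-adjoint regular operator on $\mathcal{E}^0$ (with domain $\mathcal{E}^2$, since $f$ is bounded and $d$ a scalar).

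Granting this, I would fix $\psi\in\mathcal{E}^2$, set $\mu:=d+\lambda^2>0$, and write $B^2+f+\lambda^2=T+\mu$ with $T\geq 0$. Using self-adjointness of $T$ one expands
\begin{align*}
\langle(T+\mu)\psi,(T+\mu)\psi\rangle_{\mathcal{E}^0}&=\langle T\psi,T\psi\rangle_{\mathcal{E}^0}+2\mu\,\langle T\psi,\psi\rangle_{\mathcal{E}^0}+\mu^2\langle\psi,\psi\rangle_{\mathcal{E}^0}.
\end{align*}
The first summand is a positive element of $C^*(G)$, and so is the second, because $T\geq 0$ gives $\langle T\psi,\psi\rangle_{\mathcal{E}^0}=\langle T^{1/2}\psi,T^{1/2}\psi\rangle_{\mathcal{E}^0}\geq 0$. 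Hence the left-hand side dominates $\mu^2\langle\psi,\psi\rangle_{\mathcal{E}^0}$ in $C^*(G)_+$, and since the $C^*$-norm is monotone on the positive cone,
$$\norm{(B^2+f+\lambda^2)\psi}_{\mathcal{E}^0}^2=\norm{\langle(T+\mu)\psi,(T+\mu)\psi\rangle_{\mathcal{E}^0}}_{C^*(G)}\geq\mu^2\norm{\langle\psi,\psi\rangle_{\mathcal{E}^0}}_{C^*(G)}=\mu^2\norm{\psi}_{\mathcal{E}^0}^2.$$
Taking square roots yields $\norm{(B^2+f+\lambda^2)\psi}_{\mathcal{E}^0}\geq(d+\lambda^2)\norm{\psi}_{\mathcal{E}^0}$, which is the claim.

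The step I expect to be the main obstacle is precisely promoting Lemma~\ref{lem:d} from the stated scalar infimum to the operator inequality $T\geq 0$: a bound $\norm{\langle B\psi,B\psi\rangle}+\norm{\langle\sqrt{f}\psi,\sqrt{f}\psi\rangle}\geq d$ on unit vectors does not, in a general Hilbert module, automatically give $\langle(B^2+f)\psi,\psi\rangle\geq d\,\langle\psi,\psi\rangle$. I would settle this either by verifying that the proof of Lemma~\ref{lem:d} (following Bunke~\cite{Bunke}) in fact produces the pointwise operator estimate, or, if only the scalar statement is available, by interpreting $d$ as the bottom of the spectrum of the non-negative regular self-adjoint operator $B^2+f$ and applying the continuous functional calculus for regular self-adjoint operators on Hilbert modules to the function $x\mapsto(x+\lambda^2)^{-1}$, whose supremum over $[d,\infty)$ is $(d+\lambda^2)^{-1}$. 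Everything else is a routine manipulation of $C^*(G)$-valued inner products.
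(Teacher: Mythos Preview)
The paper gives no explicit proof of this corollary; it simply refers the reader to Bunke~\cite{Bunke} for the estimates. Your argument is correct and, once the operator inequality $\langle(B^2+f)\psi,\psi\rangle_{\mathcal{E}^0}\geq d\,\langle\psi,\psi\rangle_{\mathcal{E}^0}$ is granted, is in fact slightly cleaner than the Cauchy--Schwarz route implicit in Bunke: expanding $\langle(T+\mu)\psi,(T+\mu)\psi\rangle$ directly gives an inequality in $C^*(G)_+$, whereas the Cauchy--Schwarz argument $\norm{\psi}\cdot\norm{(B^2+f+\lambda^2)\psi}\geq\norm{\langle(B^2+f+\lambda^2)\psi,\psi\rangle}$ still requires the same operator inequality to bound the right-hand side below by $(d+\lambda^2)\norm{\psi}^2$. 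So both approaches hinge on the same point, and you are right to flag it.

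Your diagnosis of the obstacle is accurate: the scalar statement $\norm{B\psi}^2+\norm{\sqrt{f}\psi}^2\geq d$ on unit vectors does \emph{not}, in a Hilbert $C^*$-module, imply the $C^*(G)_+$-valued inequality, since for positive $a,b$ one has $\norm{a+b}\leq\norm{a}+\norm{b}$ rather than the reverse. Two remarks on your proposed resolutions. First, for the main case of interest---$G$-Callias-type operators---the paper itself proves the operator inequality as Proposition~\ref{prop:Positivity}, so route~(a) is fully available there. Second, for the abstract ``$G$-invertible at infinity'' hypothesis of section~\ref{sec:G-Invertible}, your route~(b) is the right one: existence of $(B^2+f)^{-1}\in\mathcal{L}(\mathcal{E}^0,\mathcal{E}^2)$ makes $B^2+f$ a regular self-adjoint positive operator with spectrum bounded away from zero, and the functional calculus (which the paper itself invokes in Lemma~\ref{resolventofresolvent}) then yields the operator lower bound with $d$ the bottom of the spectrum. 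One small cosmetic point: your parenthetical use of $T^{1/2}$ to justify $\langle T\psi,\psi\rangle\geq 0$ does appeal to the functional calculus you said you would avoid, but it is unnecessary---positivity of $\langle T\psi,\psi\rangle$ is exactly the operator inequality you have already assumed.
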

	Let us denote the resolvent by $R(\lambda)\coloneqq(B^2+f+\lambda^2)^{-1}:\mathcal{E}^0\rightarrow\mathcal{E}^2$ whenever it exists. The next lemma is an analogue of Lemma 1.5 in \cite{Bunke}. We give a detailed proof for later reference.
	\begin{lemma}
		\label{resolventlemma}
		Suppose $B\colon\mathcal{E}^1\rightarrow\mathcal{E}^0$ is $G$-invertible at infinity. Then
		\begin{enumerate}[(a)]
			\item for all $\lambda\geq 0$, $R(\lambda)\in\mathcal{L}(\mathcal{E}^0,\mathcal{E}^2)$ exists, and $$\norm{R(\lambda)}_{\mathcal{L}(\mathcal{E}^0)}\leq(d+\lambda^2)^{-1};$$
			\item there exists $C$ such that for all $\lambda\geq 0$,
			$$\norm{B^2 R(\lambda)}_{\mathcal{L}(\mathcal{E}^0)}\leq C.$$
		\end{enumerate}
	\end{lemma}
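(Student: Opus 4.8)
The plan is to deduce everything from the single invertibility hypothesis together with the lower bound recorded in the Corollary immediately preceding the statement. Write $A\coloneqq B^2+f$, regarded as a regular, self-adjoint operator on $\mathcal{E}^0$ with domain $\mathcal{E}^2$ (regularity and essential self-adjointness of $B$, hence of $B^2$ and of $A$, being established in subsections \ref{subsec:essentialregularity} and \ref{subsec:incomplete}). By $G$-invertibility at infinity the inverse $R(0)=A^{-1}\in\mathcal{L}(\mathcal{E}^0,\mathcal{E}^2)$ exists. The first step is to note that $R(0)$ is a \emph{positive} element of $\mathcal{L}(\mathcal{E}^0)$: for $\phi\in\mathcal{E}^0$, since $AR(0)\phi=\phi$ and $B$ is self-adjoint,
\[
\langle R(0)\phi,\phi\rangle_{\mathcal{E}^0}=\langle R(0)\phi,AR(0)\phi\rangle_{\mathcal{E}^0}=\norm{BR(0)\phi}_{\mathcal{E}^0}^2+\norm{\sqrt{f}\,R(0)\phi}_{\mathcal{E}^0}^2\geq 0 .
\]

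For (a) with $\lambda>0$, the key observation --- which requires no smallness of $\lambda$ --- is that $1+\lambda^2R(0)\geq 1$ in $\mathcal{L}(\mathcal{E}^0)$, so it is invertible with $\norm{(1+\lambda^2R(0))^{-1}}_{\mathcal{L}(\mathcal{E}^0)}\leq 1$. One then has the factorisations $B^2+f+\lambda^2=(1+\lambda^2R(0))A=A(1+\lambda^2R(0))$ of operators $\mathcal{E}^2\to\mathcal{E}^0$, and a direct check (using $AR(0)=1_{\mathcal{E}^0}$, $R(0)A=1_{\mathcal{E}^2}$) shows $R(\lambda)\coloneqq R(0)(1+\lambda^2R(0))^{-1}\in\mathcal{L}(\mathcal{E}^0,\mathcal{E}^2)$ is a two-sided inverse of $B^2+f+\lambda^2$. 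The norm bound is then immediate from the Corollary above: applied to $\psi=R(\lambda)\phi\in\mathcal{E}^2$ it gives $\norm{\phi}_{\mathcal{E}^0}\geq(d+\lambda^2)\norm{R(\lambda)\phi}_{\mathcal{E}^0}$, i.e. $\norm{R(\lambda)}_{\mathcal{L}(\mathcal{E}^0)}\leq(d+\lambda^2)^{-1}$ (the case $\lambda=0$ being $\norm{R(0)}\leq d^{-1}$).

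For (b), using $B^2\in\mathcal{L}(\mathcal{E}^2,\mathcal{E}^0)$ (Corollary \ref{prop:Bibounded}) I would write, in $\mathcal{L}(\mathcal{E}^0)$,
\[
B^2R(\lambda)=(A-f)R(\lambda)=(A+\lambda^2)R(\lambda)-\lambda^2R(\lambda)-fR(\lambda)=1-\lambda^2R(\lambda)-fR(\lambda),
\]
and estimate the terms with (a): $\lambda^2\norm{R(\lambda)}_{\mathcal{L}(\mathcal{E}^0)}\leq\lambda^2(d+\lambda^2)^{-1}\leq 1$ and $\norm{fR(\lambda)}_{\mathcal{L}(\mathcal{E}^0)}\leq\norm{f}_\infty\norm{R(\lambda)}_{\mathcal{L}(\mathcal{E}^0)}\leq\norm{f}_\infty d^{-1}$. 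Hence $C\coloneqq 2+\norm{f}_\infty d^{-1}$ works uniformly in $\lambda\geq 0$.

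The only genuine obstacle is producing $R(\lambda)$ for \emph{all} $\lambda\geq 0$ rather than merely for small $\lambda$; the factorisation via $1+\lambda^2R(0)$ sidesteps a Neumann-series argument precisely because positivity of $R(0)$ forces $1+\lambda^2R(0)$ to be bounded below by $1$ for every $\lambda$. The remaining manipulations are routine once one keeps careful track of which Sobolev module each bounded adjointable operator acts on.
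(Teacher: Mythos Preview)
Your argument is correct, and part (b) is essentially identical to the paper's. Part (a), however, takes a genuinely different route. The paper proceeds by an iterative Neumann-series bootstrap: assuming the resolvent exists with the stated bound on $[0,\Lambda]$, it writes $R(\lambda)=R(\Lambda)\sum_{i\geq 0}(\Lambda^2-\lambda^2)^iR(\Lambda)^i$ to extend existence to the larger interval $\{\lambda:\lambda^2<2\Lambda^2+d\}$, then applies the Corollary to recover the norm bound on the new interval, and iterates countably many times to exhaust $[0,\infty)$. Your approach avoids the iteration entirely by observing that positivity of $R(0)$ forces $1+\lambda^2R(0)\geq 1$ in $\mathcal{L}(\mathcal{E}^0)$ for \emph{every} $\lambda$, hence is invertible in one stroke, giving the closed formula $R(\lambda)=R(0)(1+\lambda^2R(0))^{-1}$.

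Your method is cleaner for the lemma itself. The paper's Neumann expansion, on the other hand, is reused later: in Lemma~\ref{resolventofresolvent} (to perturb to $B^2+f\pm i\mu$) and in the separability argument for Bochner integrability (showing $\{R(\lambda)\}_{\lambda\geq 0}$ lies in a separable subspace). Both of those applications survive your approach too --- indeed your formula exhibits every $R(\lambda)$ as an element of the (separable) $C^*$-subalgebra generated by $R(0)$, and a small complex perturbation $A\pm i\mu=A(1\pm i\mu R(0))$ is invertible for small $\mu$ by the usual Neumann series --- so nothing downstream is lost.
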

	\begin{proof}
	Assume that (a) is true for all $0\leq\lambda\leq\Lambda$. That this is true for $\Lambda = 0$ follows from $G$-invertible at infinity and the corollary above. Indeed, since the inclusion $\mathcal{E}^2\hookrightarrow\mathcal{E}^0$ is bounded adjointable, $R(\lambda)\in\mathcal{L}(\mathcal{E}^0)$ for such $\lambda$. To get the estimate, notice that for any $\phi\in\mathcal{E}^0$, the above corollary with $\lambda = 0$ applied to the element $R(0)\phi\in\mathcal{E}^2$ gives 
	$$\norm{R(0)\phi}_{\mathcal{E}^0}\leq\frac{1}{d}\norm{(B^2+f)R(0)\phi}_{\mathcal{E}^0}=\frac{1}{d}\norm{\phi}_{\mathcal{E}^0},$$
	which proves $\norm{R(\lambda)}_{\mathcal{L}(\mathcal{E}^0)}\leq(d+\lambda^2)^{-1}$ for all $\lambda$ in this range. With this in hand, we can show existence of $R(\lambda)$ for $\lambda$ in the range $|\lambda^2-\Lambda^2|<d+\Lambda^2.$ For such $\lambda$ it is true that
	$$\norm{(\Lambda^2-\lambda^2)R(\Lambda)}_{\mathcal{L}(\mathcal{E}^0)}\leq |\Lambda^2-\lambda^2|\norm{R(\Lambda)}_{\mathcal{L}(\mathcal{E}^0)}<(d+\Lambda^2)\norm{R(\Lambda)}_{\mathcal{L}(\mathcal{E}^0)}\leq 1,$$
	where the final inequality follows from $\norm{R(\Lambda)}_{\mathcal{L}(\mathcal{E}^0)}\leq\frac{1}{d+\Lambda^2}.$ Thus the series 
	$$\sum_{i=0}^\infty (\Lambda^2-\lambda^2)^i R(\Lambda)^i$$ converges and defines an element of $\mathcal{L}(\mathcal{E}^0)$ with adjoint $\sum_{i=0}^\infty(\Lambda^2-\lambda^2)^i (R(\Lambda)^*)^i.$ Note that we have $$R(\lambda) = R(\Lambda)\left(\sum_{i=0}^\infty (\Lambda^2-\lambda^2)^i R(\Lambda)^i\right).$$
	Thus for all $\lambda$ such that $|\lambda^2-\Lambda^2|<d+\Lambda^2$, $R(\lambda)\in\mathcal{L}(\mathcal{E}^0,\mathcal{E}^2)$ exists. We can now apply the above corollary to $R(\lambda)\phi\in\mathcal{E}^2$ for any $\phi\in\mathcal{E}^0$, which yields the desired estimate for $\lambda$ in this interval:
	$$\norm{R(\lambda)\phi}_{\mathcal{E}^0}\leq\frac{1}{d+\lambda^2}\norm{(B^2+f+\lambda^2)R(\lambda)\phi}_{\mathcal{E}^0}=\frac{1}{d+\lambda^2}\norm{\phi}_{\mathcal{E}^0}.$$
	Iterating this argument countably many times, we exhaust the positive part of $\mathbb{R}$ and get (a). (b) follows from (a) by the triangle inequality applied to
	$$B^2(B^2+f+\lambda^2)^{-1}=(B^2+f+\lambda^2)(B^2+f+\lambda^2)^{-1} - (f+\lambda^2)(B^2+f+\lambda^2)^{-1},$$
	which shows that, for all $\phi\in\mathcal{E}^0$,
	\begin{align*}
	\norm{B^2 R(\lambda)\phi}_{\mathcal{E}^0}&\leq\norm{\phi}_{\mathcal{E}^0} + \norm{(f+\lambda^2)R(\lambda)\phi}_{\mathcal{E}^0}\leq C\norm{\phi}_{\mathcal{E}^0}.\qedhere
	\end{align*}
	\end{proof}
	\begin{remark}
		\label{rem:complexlambda}
		The above proof also shows that $R(\lambda)$ exists for all $\lambda\in\mathbb{C}$ with $\lambda^2>-d$.
	\end{remark}
	We would like to form the operator $R(0)^\frac{1}{2}$ via functional calculus on $R(\lambda)\in\mathcal{L}(\mathcal{E}^0)$, in order to define a bounded version of $B$. Note that $R(\lambda)$ is a self-adjoint element of the $C^*$-algebra $\mathcal{L}(\mathcal{E}^0)$. 
	
	We have the following two estimates relating to $B$ and $R(\lambda)$, as equivariant analogues of Lemmas 1.6 and 1.7 of \cite{Bunke}.
	\begin{lemma}
		\label{lem:BR}
		We have $BR(\lambda)\in\mathcal{L}(\mathcal{E}^0)$ and
		$$\norm{BR(\lambda)}_{\mathcal{L}(\mathcal{E}^0)} \leq C(d+\lambda^2)^{-1/2}$$
		for some $C<\infty$ independent of $\lambda\geq 0$, with adjoint $$(BR(\lambda))^*=BR(\lambda)+R(\lambda)c(df)R(\lambda),$$
		where $c$ denotes Clifford multiplication.
	\end{lemma}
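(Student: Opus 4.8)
The plan is to dispatch the three claims in turn. That $BR(\lambda)\in\mathcal{L}(\mathcal{E}^0)$ is immediate from what has already been established: by Lemma~\ref{resolventlemma}(a), $R(\lambda)\in\mathcal{L}(\mathcal{E}^0,\mathcal{E}^2)$; the inclusion $\mathcal{E}^2\hookrightarrow\mathcal{E}^1$ is bounded adjointable; and $B\in\mathcal{L}(\mathcal{E}^1,\mathcal{E}^0)$ by Proposition~\ref{prop:Bbounded}. Composing these three adjointable maps between Hilbert $C^*(G)$-modules shows $BR(\lambda)\in\mathcal{L}(\mathcal{E}^0)$.

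For the norm estimate I would use self-adjointness of $B$ on $\mathcal{E}^0$ (recall $B$ is regular and self-adjoint, by the standing hypothesis on $G$-triples equipped with such an operator together with the results of subsections~\ref{subsec:essentialregularity} and~\ref{subsec:incomplete}). Fix $\phi\in\mathcal{E}^0$ and set $u\coloneqq R(\lambda)\phi\in\mathcal{E}^2\subseteq\operatorname{dom}(B)$; then $Bu\in\mathcal{E}^1\subseteq\operatorname{dom}(B)$ as well, so $\langle Bu,Bu\rangle_{\mathcal{E}^0}=\langle u,B^2u\rangle_{\mathcal{E}^0}$. The Cauchy--Schwarz inequality for the $C^*(G)$-valued inner product \cite{Lance} and then Lemma~\ref{resolventlemma} give
$$\norm{BR(\lambda)\phi}_{\mathcal{E}^0}^2=\norm{\langle u,B^2u\rangle_{\mathcal{E}^0}}_{C^*(G)}\le\norm{R(\lambda)\phi}_{\mathcal{E}^0}\,\norm{B^2R(\lambda)\phi}_{\mathcal{E}^0}\le\frac{C}{d+\lambda^2}\,\norm{\phi}_{\mathcal{E}^0}^2,$$
using $\norm{R(\lambda)}_{\mathcal{L}(\mathcal{E}^0)}\le(d+\lambda^2)^{-1}$ and $\norm{B^2R(\lambda)}_{\mathcal{L}(\mathcal{E}^0)}\le C$. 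Taking square roots yields the claimed bound with constant $\sqrt{C}$.

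For the adjoint, the central input is the Dirac-type commutator identity $[B,f]=c(df)$ on $C_c^\infty(E)$; we may and do assume $f$ smooth, so that $c(df)$ makes sense, and since $df$ is then a bounded, $G$-invariant, cocompactly supported $1$-form, the same argument as for multiplication by bounded $G$-invariant functions shows $c(df)\in\mathcal{L}(\mathcal{E}^0)$, whence $R(\lambda)c(df)R(\lambda)\in\mathcal{L}(\mathcal{E}^0)$. Since $[B,B^2]=0$, we have $B(B^2+f+\lambda^2)=(B^2+f+\lambda^2)B+c(df)$ on $C_c^\infty(E)$. Now pick $u,v\in C_c^\infty(E)$ and set $\phi\coloneqq(B^2+f+\lambda^2)u$, $\psi\coloneqq(B^2+f+\lambda^2)v$, so $u=R(\lambda)\phi$, $v=R(\lambda)\psi$; as $u,v$ range over $C_c^\infty(E)$, the elements $\phi,\psi$ range over a dense subspace of $\mathcal{E}^0$, because $B^2+f+\lambda^2\colon\mathcal{E}^2\to\mathcal{E}^0$ is a bounded bijection and $C_c^\infty(E)$ is dense in $\mathcal{E}^2$. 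Moving $B$ and $B^2+f+\lambda^2$ across $\langle\,\cdot\,,\,\cdot\,\rangle_{\mathcal{E}^0}$ by formal self-adjointness on smooth compactly supported sections, and using $R(\lambda)^*=R(\lambda)$, one computes
\begin{align*}
\langle BR(\lambda)\phi,\psi\rangle_{\mathcal{E}^0}
&=\langle Bu,(B^2+f+\lambda^2)v\rangle_{\mathcal{E}^0}\\
&=\langle u,B(B^2+f+\lambda^2)v\rangle_{\mathcal{E}^0}\\
&=\langle u,(B^2+f+\lambda^2)Bv\rangle_{\mathcal{E}^0}+\langle u,c(df)v\rangle_{\mathcal{E}^0}\\
&=\langle(B^2+f+\lambda^2)u,Bv\rangle_{\mathcal{E}^0}+\langle R(\lambda)\phi,c(df)R(\lambda)\psi\rangle_{\mathcal{E}^0}\\
&=\langle\phi,BR(\lambda)\psi\rangle_{\mathcal{E}^0}+\langle\phi,R(\lambda)c(df)R(\lambda)\psi\rangle_{\mathcal{E}^0}.
\end{align*}
Both sides are continuous in $\phi,\psi\in\mathcal{E}^0$ since every operator in sight is bounded on $\mathcal{E}^0$, so the identity persists for all $\phi,\psi\in\mathcal{E}^0$; reading it off gives $(BR(\lambda))^*=BR(\lambda)+R(\lambda)c(df)R(\lambda)$.

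The step I expect to be the main obstacle is the last one, and specifically the domain bookkeeping inside the displayed chain of equalities: $B$ and $B^2+f+\lambda^2$ are unbounded on $\mathcal{E}^0$ and are only bounded as maps between consecutive $G$-Sobolev modules, so each use of formal self-adjointness and of the commutator identity must be checked on the dense domain $C_c^\infty(E)$ before the final density argument is invoked. A minor subtlety is that the commutator identity forces the assumption that $f$ is smooth rather than merely continuous, which is harmless. Beyond this, the proof is a direct combination of Lemma~\ref{resolventlemma}, the Hilbert-module Cauchy--Schwarz inequality, and the boundedness and adjointability facts for $G$-Sobolev modules proved above.
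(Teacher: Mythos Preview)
Your proof is correct and follows essentially the same route as the paper, which does not spell out the argument but refers to \cite{Bunke}; the approach there is precisely what you do: compose adjointable maps for membership in $\mathcal{L}(\mathcal{E}^0)$, combine Cauchy--Schwarz with Lemma~\ref{resolventlemma}(a)--(b) for the norm bound, and obtain the adjoint from the commutator identity $[B,f]=c(df)$ together with $R(\lambda)^*=R(\lambda)$. Your domain bookkeeping via the dense range of $B^2+f+\lambda^2$ on $C_c^\infty(E)$ is the right way to make the formal computation rigorous in the Hilbert-module setting.
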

	\begin{lemma}
		The commutator of $B$ and $R(\lambda)$ acts on $\phi\in\mathcal{E}^1$ by
		$$\left[B,R(\lambda)\right]\phi = -R(\lambda)c(df)R(\lambda)\phi.$$
	\end{lemma}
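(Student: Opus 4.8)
The identity is the Hilbert-module incarnation of the resolvent commutator formula $[B,A^{-1}]=-A^{-1}[B,A]A^{-1}$ for $A\coloneqq B^2+f+\lambda^2$, so the plan is to reduce it to this piece of algebra together with the principal-symbol computation for a Dirac-type operator. First I would record that for any smooth function $h$ one has $[B,h]=c(dh)$ --- this is precisely the statement that $B$ has the principal symbol of a Dirac-type operator --- and in particular $[B,f]=c(df)$. Since $f$ is $G$-invariant and cocompactly supported it is smooth with bounded derivatives of all orders, so $c(df)$ is a $G$-invariant bundle endomorphism with bounded coefficients and hence, by the boundedness results of Section~\ref{sec:G-Sobolev}, defines an element of $\mathcal{L}(\mathcal{E}^i,\mathcal{E}^j)$ for $i\ge j$ (indeed of $\mathcal{L}(\mathcal{E}^i)$ for every $i$). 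Expanding $A$ and using that $B^2$ commutes with $B$ on the relevant domains then gives the operator identity $AB-BA=fB-Bf=-[B,f]=-c(df)$.

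Next, fix $\phi\in\mathcal{E}^1$ and set $u\coloneqq R(\lambda)\phi$, so $Au=\phi$. The one point requiring care is that $u$ is regular enough: since $\mathcal{E}^\ell\cong\textnormal{dom}(\overline{B^\ell})$ with the graph norm and $B^2u=\phi-(f+\lambda^2)u\in\mathcal{E}^1$, we get $u\in\mathcal{E}^3$, hence $Bu=BR(\lambda)\phi\in\mathcal{E}^2=\textnormal{dom}(A)$ (alternatively, $u\in\mathcal{E}^3$ follows from the local mapping property $R(\lambda)\colon\mathcal{E}^\ell\to\mathcal{E}^{\ell+2}$, obtained from \cite{Kasparov} Theorem~5.8 and patched together exactly as in the proof of Proposition~\ref{prop:Bbounded}). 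With this in hand, $AR(\lambda)=\id$ on $\mathcal{E}^0$ and $R(\lambda)A=\id$ on $\mathcal{E}^2$, so one computes
\begin{align*}
[B,R(\lambda)]\phi
&=BR(\lambda)\phi-R(\lambda)B\phi\\
&=R(\lambda)\,A\,BR(\lambda)\phi-R(\lambda)\,B\,AR(\lambda)\phi\\
&=R(\lambda)\,(AB-BA)\,R(\lambda)\phi\\
&=-R(\lambda)\,c(df)\,R(\lambda)\phi,
\end{align*}
which is the assertion; all intermediate terms lie in the modules just indicated, and $c(df)R(\lambda)\phi\in\mathcal{E}^0$ so that the outer $R(\lambda)$ applies.

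If one wishes to sidestep the regularity input, the identity can instead be checked on the dense subspace $C_c^\infty(E)\subseteq\mathcal{E}^1$ and then extended by continuity, since by the preceding lemma $BR(\lambda)\in\mathcal{L}(\mathcal{E}^0)$ while $R(\lambda)B$ and $R(\lambda)c(df)R(\lambda)$ are bounded $\mathcal{E}^1\to\mathcal{E}^0$, so both sides are bounded operators $\mathcal{E}^1\to\mathcal{E}^0$ agreeing on a dense set. Either way, the main (and essentially the only) obstacle is the domain bookkeeping around $R(\lambda)\phi$: one must verify that $BR(\lambda)\phi$ lands in $\textnormal{dom}(A)=\mathcal{E}^2$ before invoking the cancellations $R(\lambda)A=\id$ and $AR(\lambda)=\id$; once that is secured the computation is purely formal.
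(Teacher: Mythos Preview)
Your argument is correct and is exactly the standard resolvent--commutator computation one expects here; the paper itself does not spell out a proof but defers to \cite{Bunke}, where the same manipulation is carried out. Your care with the domain bookkeeping (showing $R(\lambda)\phi\in\mathcal{E}^3$ so that $BR(\lambda)\phi\in\mathcal{E}^2=\textnormal{dom}(A)$) is appropriate and matches what is implicitly required.
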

	\begin{lemma}\label{resolventofresolvent}
		The operator 
		$$\left(R(0)+\kappa\right)^{-1}\in\mathcal{L}(\mathcal{E}^0)$$
		exists for all $\kappa\in (-\infty,-\frac{1}{d})\cup (0,\infty)$.
	\end{lemma}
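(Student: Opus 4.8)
\section*{Proof proposal for Lemma~\ref{resolventofresolvent}}

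The plan is to read the statement off from the spectral picture of $R(0)$, viewed as a self-adjoint element of the $C^*$-algebra $\mathcal{L}(\mathcal{E}^0)$ of adjointable operators on the Hilbert $C^*(G)$-module $\mathcal{E}^0$.

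First I would check that $R(0)=(B^2+f)^{-1}$ is a \emph{positive} element of $\mathcal{L}(\mathcal{E}^0)$. For $\phi\in\mathcal{E}^0$ put $\psi:=R(0)\phi\in\mathcal{E}^2$, so that $(B^2+f)\psi=\phi$; then, using self-adjointness of $B$ on $\mathcal{E}^0$ and $f\geq 0$,
\[
\langle\phi,R(0)\phi\rangle_{\mathcal{E}^0}=\langle(B^2+f)\psi,\psi\rangle_{\mathcal{E}^0}=\langle B\psi,B\psi\rangle_{\mathcal{E}^0}+\langle\sqrt{f}\,\psi,\sqrt{f}\,\psi\rangle_{\mathcal{E}^0},
\]
which is a sum of positive elements of $C^*(G)$, hence positive. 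Together with the self-adjointness of $R(0)$ recorded just before the lemma this gives $R(0)\in\mathcal{L}(\mathcal{E}^0)_+$, and combined with the bound $\norm{R(0)}_{\mathcal{L}(\mathcal{E}^0)}\leq 1/d$ from Lemma~\ref{resolventlemma}(a) at $\lambda=0$ it shows that the spectrum of $R(0)$ in $\mathcal{L}(\mathcal{E}^0)$ is contained in $[0,1/d]$.

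Given this, the lemma is immediate, since $R(0)+\kappa$ is invertible in $\mathcal{L}(\mathcal{E}^0)$ as soon as $-\kappa$ lies outside the spectrum of $R(0)$. Explicitly: when $\kappa>0$ we have $R(0)+\kappa\geq\kappa\cdot\id>0$, so $R(0)+\kappa$ is invertible with inverse of norm at most $\kappa^{-1}$; when $\kappa<-1/d$ the inequalities $0\leq R(0)\leq(1/d)\id$ give $-(R(0)+\kappa)\geq(-\kappa-1/d)\cdot\id>0$, so $R(0)+\kappa$ is again invertible. (One also checks that $0$ lies in the spectrum of $R(0)$ --- e.g.\ since $R(0)$ has image in $\mathcal{E}^2\subsetneq\mathcal{E}^0$ and hence is not surjective --- so the interval $[-1/d,0]$ really is excluded.)

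There is no serious obstacle; one need only remember that positivity and the spectrum here are taken in the $C^*$-algebra $\mathcal{L}(\mathcal{E}^0)$ of adjointable operators on a Hilbert $C^*(G)$-module rather than for operators on a Hilbert space. Since $\mathcal{L}(\mathcal{E}^0)$ is a genuine $C^*$-algebra this changes nothing, and every ingredient --- that $R(0)\in\mathcal{L}(\mathcal{E}^0)$, that it is self-adjoint and positive, and the norm bound $\norm{R(0)}\leq 1/d$ --- is already provided by the preceding lemmas.
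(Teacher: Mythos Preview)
Your proof is correct and arrives at the same key fact as the paper --- that the spectrum of $R(0)$ in $\mathcal{L}(\mathcal{E}^0)$ is contained in $[0,1/d]$ --- but by a more elementary route. The paper first uses the Neumann series from Lemma~\ref{resolventlemma} to see that $B^2+f\pm i\mu$ are invertible for small $\mu>0$, then invokes a criterion of Kaad--Lesch to conclude that $B^2+f$ is a regular self-adjoint operator with spectrum in $[d,\infty)$, and finally applies the functional calculus for regular self-adjoint operators to pass to the spectrum of $R(0)=(B^2+f)^{-1}$. You bypass the unbounded-operator machinery entirely and work inside the $C^*$-algebra $\mathcal{L}(\mathcal{E}^0)$: the self-adjointness of $R(0)$ together with your direct positivity computation and the norm bound $\norm{R(0)}\leq 1/d$ from Lemma~\ref{resolventlemma}(a) already force $\sigma(R(0))\subseteq[0,1/d]$. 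Your argument is shorter; the paper's has the incidental benefit of recording regularity of $B^2+f$. One small remark: your parenthetical claim that $0$ lies in the spectrum is not needed for the lemma as stated, and the justification ``image in $\mathcal{E}^2\subsetneq\mathcal{E}^0$'' is a little loose since $\mathcal{E}^2$ is dense in $\mathcal{E}^0$; the cleaner way to say it is that if $R(0)$ were invertible in $\mathcal{L}(\mathcal{E}^0)$ then $B^2+f$ would be bounded on $\mathcal{E}^0$.
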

	\begin{proof}
		By Remark \ref{rem:complexlambda}, that there exists a sufficiently small real number $\mu$ such that $B^2+f+\mu i$ and $B^2+f-\mu i$ are both invertible. By Proposition 4.1 of \cite{KaadLesch}, this means that $B^2+f$ is a regular self-adjoint operator with spectrum contained in $\left(-\infty,d\,\right]$. Upon taking the inverse, the continuous functional calculus for regular self-adjoint operators (Theorem 1.19 in \cite{Ebert}) implies that spectrum of $R(0)\in\mathcal{L}(\mathcal{E}^0)$ is contained in $\left[0,\frac{1}{d}\right]$. It follows that $(R(0)+\kappa)^{-1}\in\mathcal{L}(\mathcal{E}^0)$ exists for all $\kappa\in (-\infty,-\frac{1}{d})\cup (0,\infty)$.
	\end{proof}
	\begin{definition}
		\label{def:definitionofF}
		Suppose  $(G,M,E)$ is a $G$-triple equipped with an operator $B$ that is $G$-invertible at infinity. Then for any $\psi\in\mathcal{E}^1$, the integral
		$$\frac{2}{\pi}\int_{0}^\infty BR(\lambda)\psi\,d\lambda$$
		converges in $\mathcal{E}^0$ and defines a bounded operator $\mathcal{E}^1\rightarrow\mathcal{E}^0$, where elements of $\mathcal{E}^1$ are given the $\mathcal{E}^0$-norm. This operator extends to an odd operator $F\in\mathcal{L}(\mathcal{E}^0)$.
	\end{definition}
		We now proceed as in \cite{Bunke} Lemma 1.8, with $D$ replaced by $B$, $H^i$ replaced by $\mathcal{E}^i$ and $B(H^0)$ replaced by $\mathcal{L}(\mathcal{E}^0)$. One verifies that
		$$R(\lambda)=\frac{1}{\lambda^2}R(0)\left(R(0)+\frac{1}{\lambda^2}\right)^{-1},$$
		where the inverse on the right-hand side exists by Lemma \ref{resolventofresolvent}. A manipulation given in \cite{Bunke} Lemma 1.8 then shows that
		$$\frac{2}{\pi}\int_0^\infty R(\lambda)\,d\lambda = R(0)^{1/2},$$ 
		the right-hand side being defined by functional calculus in $\mathcal{L}(\mathcal{E}^0).$ The operator $R(0)^{1/2}B$ extends by continuity to an element $L\in\mathcal{L}(\mathcal{E}^0)$ such that for $\psi\in\mathcal{E}^1$, $$F\psi=L\psi-\frac{2}{\pi}\int_0^\infty R(\lambda)c(df)R(\lambda)\psi\,d\lambda.$$ 
		The continuous extension of this operator defines $F\in\mathcal{L}(\mathcal{E}^0)$.
	\begin{proposition}
		The above definition of $F$ is equivalent to
		$$F\phi\coloneqq\frac{2}{\pi}\int_0^\infty BR(\lambda)\phi\,d\lambda\qquad\forall\phi\in\mathcal{E}^0.$$
	\end{proposition}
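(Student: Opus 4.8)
The plan is to view the right-hand side as the strong limit, as $T\to\infty$, of the truncated operators $S_T\phi\coloneqq\frac{2}{\pi}\int_0^T BR(\lambda)\phi\,d\lambda$, and to show that $S_T\to F$ strongly on all of $\mathcal{E}^0$. First I would note that $S_T$ is a well-defined element of $\mathcal{L}(\mathcal{E}^0)$ for each finite $T$: the Lemma above bounding $BR(\lambda)$ gives $BR(\lambda)\in\mathcal{L}(\mathcal{E}^0)$ with $\|BR(\lambda)\|\le C(d+\lambda^2)^{-1/2}$, while the resolvent identity $R(\lambda)-R(\lambda')=(\lambda'^2-\lambda^2)R(\lambda)R(\lambda')$ together with Lemma \ref{resolventlemma} shows $\lambda\mapsto BR(\lambda)$ is norm-continuous, so $\frac{2}{\pi}\int_0^T BR(\lambda)\,d\lambda$ converges in operator norm. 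Since Definition \ref{def:definitionofF} already asserts that $S_T\psi\to F\psi$ for every $\psi$ in the dense subspace $\mathcal{E}^1\subseteq\mathcal{E}^0$ (recall $C^\infty_c(E)\subseteq\mathcal{E}^1$), it will be enough to prove that the family $\{S_T\}_{T>0}$ is uniformly bounded in $\mathcal{L}(\mathcal{E}^0)$; then a routine $\varepsilon/3$-argument promotes strong convergence on $\mathcal{E}^1$ to strong convergence on all of $\mathcal{E}^0$, and the resulting strong limit $\tilde F\in\mathcal{L}(\mathcal{E}^0)$, which agrees with $F$ on the dense set $\mathcal{E}^1$, must equal $F$.

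The one non-formal step — and the main obstacle — is the uniform bound $\sup_T\|S_T\|_{\mathcal{L}(\mathcal{E}^0)}<\infty$. One cannot obtain this from $\|S_T\|\le\frac{2}{\pi}\int_0^T\|BR(\lambda)\|\,d\lambda$, since $(d+\lambda^2)^{-1/2}$ is not integrable at infinity; instead one has to use cancellation. For $\psi\in\mathcal{E}^1$ I would apply the commutator Lemma above, $BR(\lambda)\psi=R(\lambda)B\psi-R(\lambda)c(df)R(\lambda)\psi$, to get
$$S_T\psi=\Big(\tfrac{2}{\pi}\int_0^T R(\lambda)\,d\lambda\Big)B\psi-\tfrac{2}{\pi}\int_0^T R(\lambda)c(df)R(\lambda)\psi\,d\lambda.$$
The second term is harmless: $\|R(\lambda)c(df)R(\lambda)\|\le\|c(df)\|(d+\lambda^2)^{-2}$ is integrable, so it is bounded by a $T$-independent multiple of $\|\psi\|$. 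For the first term, the continuous functional calculus for the positive invertible regular self-adjoint operator $B^2+f$ identifies $\tfrac{2}{\pi}\int_0^T R(\lambda)\,d\lambda$ with $g_T(B^2+f)$, where $g_T(a)=\tfrac{2}{\pi\sqrt a}\arctan(T/\sqrt a)$ satisfies $0\le g_T(a)\le a^{-1/2}$ on the spectrum; hence $g_T(B^2+f)^2\le(B^2+f)^{-1}=R(0)$ in $\mathcal{L}(\mathcal{E}^0)_+$, and therefore
$$\big\|g_T(B^2+f)B\psi\big\|^2=\big\|\langle B\psi,g_T(B^2+f)^2B\psi\rangle\big\|\le\big\|\langle B\psi,R(0)B\psi\rangle\big\|=\big\|R(0)^{1/2}B\psi\big\|^2\le\|L\|^2\|\psi\|^2,$$
with $L=\overline{R(0)^{1/2}B}\in\mathcal{L}(\mathcal{E}^0)$ the bounded operator extracted in the paragraph after Definition \ref{def:definitionofF}. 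Combining the two estimates gives $\|S_T\psi\|\le C'\|\psi\|$ for all $\psi\in\mathcal{E}^1$ with $C'$ independent of $T$, and density of $\mathcal{E}^1$ in $\mathcal{E}^0$ together with $S_T\in\mathcal{L}(\mathcal{E}^0)$ yields the desired uniform bound on $\|S_T\|_{\mathcal{L}(\mathcal{E}^0)}$.

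Finally I would assemble the conclusion. For $\phi\in\mathcal{E}^0$ and $\psi\in\mathcal{E}^1$ with $\|\phi-\psi\|$ small, the uniform bound and the convergence on $\mathcal{E}^1$ give $\|S_T\phi-S_{T'}\phi\|\le 2C'\|\phi-\psi\|+\|S_T\psi-S_{T'}\psi\|$, so $\{S_T\phi\}$ is Cauchy in $\mathcal{E}^0$; its limit defines $\tilde F\in\mathcal{L}(\mathcal{E}^0)$, and since $\tilde F=F$ on the dense subspace $\mathcal{E}^1$ we conclude $\tilde F=F$, that is, $F\phi=\frac{2}{\pi}\int_0^\infty BR(\lambda)\phi\,d\lambda$ for every $\phi\in\mathcal{E}^0$. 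Everything here except the uniform bound in the previous paragraph I expect to be routine; the subtlety is precisely that $BR(\lambda)$ is only conditionally, not absolutely, integrable, so the bound must come from the functional-calculus comparison $g_T\le(\,\cdot\,)^{-1/2}$ rather than from a size estimate on $\|BR(\lambda)\|$ itself.
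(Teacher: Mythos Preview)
Your argument is correct, but it takes a different route from the paper's. The paper proves the identity weakly: for $\phi=\lim_n\phi_n$ and any $\psi\in\mathcal{E}^1$, it computes
\[
\langle\psi,F\phi\rangle_{\mathcal{E}^0}=\lim_n\Big\langle\psi,\tfrac{2}{\pi}\int_0^\infty BR(\lambda)\phi_n\,d\lambda\Big\rangle_{\mathcal{E}^0}
=\tfrac{2}{\pi}\int_0^\infty\langle R(\lambda)B\psi,\phi\rangle_{\mathcal{E}^0}\,d\lambda,
\]
using that $\langle\psi,BR(\lambda)\,\cdot\,\rangle=\langle R(\lambda)B\psi,\,\cdot\,\rangle$ on $\mathcal{E}^0$ for $\psi\in\mathcal{E}^1$, and that $\lambda\mapsto\langle R(\lambda)B\psi,\phi\rangle$ is absolutely integrable since $\norm{R(\lambda)}\le(d+\lambda^2)^{-1}$. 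This is shorter because it sidesteps the uniform bound on $S_T$ entirely: moving $B$ past $R(\lambda)$ onto $\psi\in\mathcal{E}^1$ replaces the non-integrable decay $(d+\lambda^2)^{-1/2}$ by the integrable $(d+\lambda^2)^{-1}$. By contrast, your approach proves strong convergence of the truncated integrals $S_T$ on all of $\mathcal{E}^0$, via the functional-calculus comparison $g_T(\,\cdot\,)\le(\,\cdot\,)^{-1/2}$ and the $\varepsilon/3$ argument. Your route is longer but yields a sharper statement (norm, not merely weak, convergence of the $\mathcal{E}^0$-valued integral) and makes completely explicit what the paper's final equality $\tfrac{2}{\pi}\int_0^\infty\langle R(\lambda)B\psi,\phi\rangle\,d\lambda=\langle\psi,\tfrac{2}{\pi}\int_0^\infty BR(\lambda)\phi\,d\lambda\rangle$ leaves implicit about the sense in which that integral exists.
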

	\begin{proof}
		Let $\phi = \lim_{n\rightarrow\infty}\phi_n$, where $\phi_n\in\mathcal{E}^1$. Then for all $\psi\in\mathcal{E}^1$, we have
		\begin{align*}
		\langle\psi,F\phi\rangle_{\mathcal{E}^0}&=\lim\langle\psi,\frac{2}{\pi}\int_0^\infty BR(\lambda)\phi_n\,d\lambda\rangle_{\mathcal{E}^0}\\
		&=\frac{2}{\pi}\int_0^\infty\langle R(\lambda)B\psi,\lim\phi_n\rangle_{\mathcal{E}^0}\,d\lambda\\
		&=\langle\psi,\frac{2}{\pi}\int_0^\infty BR(\lambda)\phi\,d\lambda\rangle_{\mathcal{E}^0}.\qedhere
		\end{align*}
	\end{proof}
	The next result follows from the proof of Lemma 1.11 in \cite{Bunke} and Theorem \ref{thm:G-Rellich} of the present paper.
	\begin{proposition}
		Let $B$ be $G$-invertible at infinity. Then $F^2\sim 1$ modulo $\mathcal{K}(\mathcal{E}^0)$.
	\end{proposition}
	\begin{proof}
		Note that by \cite{Bunke} Lemmas 1.8 and 1.9, $R(0)^{1/2}B$ extends by continuity to an operator $L\in\mathcal{L}(\mathcal{E}^0)$ and that
		$F$ differs from $L$ by a compact operator.
		Furthermore, $F - F^*\in\mathcal{K}(\mathcal{E}^0)$. Thus it is sufficient to show that $LL^* - 1\in\mathcal{K}(\mathcal{E}^0)$. For $\psi\in\mathcal{E}^2$, we have
		\begin{align*}
		(LL^* - 1)\psi &= R(0)^{1/2}B^2RR(0)^{1/2}(B^2+f)\psi-\psi\\
		&=-R(0)^{1/2}fR(0)^{1/2}\psi.
		\end{align*}
		One calculates that $[f,R(\lambda)]=R(\lambda)(Bc(df)+c(df)B)R(\lambda)\in\mathcal{L}(\mathcal{E}^0).$ Observing Theorem \ref{thm:G-Rellich}, one sees that $fR(0)^{1/2}$ differs from $R(0)^{1/2}f$ by a compact operator.
	\end{proof}
	This allows us to state our first main result:
	\begin{theorem}
		\label{thm:G-Index}
		Let $(G,M,E)$ be a $\mathbb{Z}_2$-graded $G$-triple equipped with an odd operator $B$ that is $G$-invertible at infinity. Then the bounded transform of $B$, $F\in\mathcal{L}(\mathcal{E}^0)$, is $C^*(G)$-Fredholm with an index in $K_0(C^*(G)).$
	\end{theorem}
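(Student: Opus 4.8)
The plan is to recognise $(\mathcal{E}^0, F)$ as a Kasparov cycle for the pair $(\mathbb{C}, C^*(G))$ and to take the desired index to be its class in $KK(\mathbb{C}, C^*(G)) \cong K_0(C^*(G))$. Because the left algebra $\mathbb{C}$ acts on $\mathcal{E}^0$ by scalars, the commutator condition in the definition of a Kasparov cycle is automatic, so all that must be checked is: $F\in\mathcal{L}(\mathcal{E}^0)$ is odd for the $\mathbb{Z}_2$-grading; $F - F^* \in \mathcal{K}(\mathcal{E}^0)$; and $F^2 - 1 \in \mathcal{K}(\mathcal{E}^0)$.

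All three points are already available. Oddness of $F$ is immediate from Definition \ref{def:definitionofF}, since $B$ is odd and each resolvent $R(\lambda)$ is even, so every $BR(\lambda)$ is odd and hence so is the norm-convergent integral $\tfrac{2}{\pi}\int_0^\infty BR(\lambda)\,d\lambda$. The relation $F - F^* \in \mathcal{K}(\mathcal{E}^0)$ was extracted in the proof of the previous theorem: $F$ differs by a compact operator from $L = \overline{R(0)^{1/2}B}\in\mathcal{L}(\mathcal{E}^0)$, and $L - L^*$ is compact as well, using the computed adjoint of $BR(\lambda)$ and the commutator formula $[B,R(\lambda)] = -R(\lambda)c(df)R(\lambda)$. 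Finally, $F^2 - 1 \in \mathcal{K}(\mathcal{E}^0)$ is exactly the content of the preceding theorem: its proof reduces $F^2 - 1$, modulo compacts, to $-R(0)^{1/2} f R(0)^{1/2}$, which modulo compacts equals $-fR(0)$, and this lies in $\mathcal{K}(\mathcal{E}^0)$ because $R(0)\in\mathcal{L}(\mathcal{E}^0,\mathcal{E}^2)$ while multiplication by the cocompactly supported $G$-invariant function $f$ sends $\mathcal{E}^2$ to $\mathcal{E}^0$ compactly, by the $G$-Rellich Lemma \ref{lem:G-Rellich}.

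With the Kasparov axioms verified, I would conclude as follows. The module $\mathcal{E}^0$ is countably generated over $C^*(G)$ (it is the completion of $C_c^\infty(E)$ for the second countable manifold $M$ and second countable group $G$), so the standard Hilbert-module Fredholm theory applies: an odd $F\in\mathcal{L}(\mathcal{E}^0)$ that is invertible and self-adjoint modulo $\mathcal{K}(\mathcal{E}^0)$ is $C^*(G)$-Fredholm, and, after a compact perturbation, its odd part has finitely generated projective kernel and cokernel, whose formal difference is precisely the class $[\mathcal{E}^0, F]\in KK(\mathbb{C},C^*(G)) \cong K_0(C^*(G))$. I would take this class as the definition of $\ind_G(F)$.

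None of this assembly is difficult once the earlier estimates are in place; the substantive work sits entirely in the preceding lemmas. If I had to name the main obstacle, it is the one that has already been overcome: making the heuristic identity $F^2 - 1 = -f(B^2+f)^{-1}$ rigorous at the level of Hilbert $C^*(G)$-modules, which requires both the mapping properties of the resolvent along the scale of $G$-Sobolev modules (Lemma \ref{resolventlemma} and its corollaries) and the $G$-equivariant Rellich lemma to force the error term into $\mathcal{K}(\mathcal{E}^0)$. I would also be careful that every boundedness, adjointability and compactness statement used is uniform in the choice of $C^*$-completion of $G$, so that the theorem holds verbatim for $C^*_r(G)$ as well as for the maximal $C^*(G)$.
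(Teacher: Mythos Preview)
Your proposal is correct and matches the paper's approach: the paper also treats Theorem~\ref{thm:G-Index} as an immediate consequence of the preceding theorem ($F^2-1\in\mathcal{K}(\mathcal{E}^0)$) together with the observation $F-F^*\in\mathcal{K}(\mathcal{E}^0)$ made in its proof, and then (in Theorem~\ref{thm:CalliasKKClass}) packages $(\mathcal{E}^0,F)$ as a Kasparov $(\mathbb{C},C^*(G))$-cycle whose class gives $\ind_G(F)\in K_0(C^*(G))$. Your explicit assembly of the Kasparov axioms is exactly what the paper leaves implicit between these two theorems.
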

	\noindent We shall write $\ind_G(F)$ for the $C^*(G)$-index of $F$.
	
	\subsection{A Simplified Definition of $F$}
	\label{subsec:equivalentdef}
	We now show that in fact
	$$F = \frac{2}{\pi}\int_0^\infty BR(\lambda)\,d\lambda = BR(0)^{1/2}.$$ 
	This will simplify certain calculations, for instance in section \ref{sec:PSC}. The argument uses facts about regular operators and Bochner integration.
	
	It follows from the proofs of Lemmas 9.1 and 9.2 in \cite{Lance} that the operator $R(0)^{1/2}$ has range equal to $\mathcal{E}^1$. Moreover, using the functional calculus for regular operators (see \cite{Kustermans} section 7, \cite{Ebert} Theorem 1.19 and \cite{Lance} Chapter 10), we deduce that $R(0)^{1/2}\in\mathcal{L}(\mathcal{E}^0,\mathcal{E}^1)$. Using these facts, we have that:
	
	\begin{proposition}\label{prop:equivalentdefinition}
		Let $F\in\mathcal{L}(\mathcal{E}^0)$ be as in the previous subsection. Then $F=BR(0)^{1/2}$.
	\end{proposition}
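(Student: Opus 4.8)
The plan is to verify the identity $F\phi = BR(0)^{1/2}\phi$ on the dense submodule $\mathcal{E}^1\subseteq\mathcal{E}^0$ and then extend by continuity. Both operators already belong to $\mathcal{L}(\mathcal{E}^0)$: for $F$ this is how it was defined, while $BR(0)^{1/2}\in\mathcal{L}(\mathcal{E}^0)$ because $R(0)^{1/2}\in\mathcal{L}(\mathcal{E}^0,\mathcal{E}^1)$ (recalled above) and $B\in\mathcal{L}(\mathcal{E}^1,\mathcal{E}^0)$ by Proposition \ref{prop:Bbounded}. Hence it suffices to prove the identity on $\mathcal{E}^1$.

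The crux is to upgrade the Bochner integral $\frac{2}{\pi}\int_0^\infty R(\lambda)\phi\,d\lambda$ --- which we already know converges in $\mathcal{E}^0$, with $\frac{2}{\pi}\int_0^\infty R(\lambda)\,d\lambda = R(0)^{1/2}$ in $\mathcal{L}(\mathcal{E}^0)$ --- to an integral converging in the finer norm of $\mathcal{E}^1$ when $\phi\in\mathcal{E}^1$. To this end I would estimate $\norm{R(\lambda)\phi}_{\mathcal{E}^1}$ through the graph-norm bound $\norm{u}_{\mathcal{E}^1}^2\leq\norm{u}_{\mathcal{E}^0}^2+\norm{Bu}_{\mathcal{E}^0}^2$. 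By Lemma \ref{resolventlemma}(a), $\norm{R(\lambda)\phi}_{\mathcal{E}^0}\leq(d+\lambda^2)^{-1}\norm{\phi}_{\mathcal{E}^0}$. For the derivative term, the commutator formula established above (valid on $\mathcal{E}^1$) gives
$$BR(\lambda)\phi = R(\lambda)B\phi + [B,R(\lambda)]\phi = R(\lambda)B\phi - R(\lambda)c(df)R(\lambda)\phi,$$
so $\norm{BR(\lambda)\phi}_{\mathcal{E}^0}\leq(d+\lambda^2)^{-1}\norm{B\phi}_{\mathcal{E}^0}+\norm{c(df)}\,(d+\lambda^2)^{-2}\norm{\phi}_{\mathcal{E}^0}$, again by Lemma \ref{resolventlemma}(a). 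Combining these yields $\norm{R(\lambda)\phi}_{\mathcal{E}^1}\leq C(d+\lambda^2)^{-1}\norm{\phi}_{\mathcal{E}^1}$, which is integrable over $[0,\infty)$; hence $\lambda\mapsto R(\lambda)\phi$ is Bochner integrable as an $\mathcal{E}^1$-valued function. Applying the bounded, injective inclusion $\mathcal{E}^1\hookrightarrow\mathcal{E}^0$, which commutes with Bochner integration, identifies the $\mathcal{E}^1$-valued integral $\frac{2}{\pi}\int_0^\infty R(\lambda)\phi\,d\lambda$ with $R(0)^{1/2}\phi\in\mathcal{E}^1$.

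Since $B\in\mathcal{L}(\mathcal{E}^1,\mathcal{E}^0)$ is bounded, it passes through the $\mathcal{E}^1$-valued Bochner integral, so
$$BR(0)^{1/2}\phi = B\left(\frac{2}{\pi}\int_0^\infty R(\lambda)\phi\,d\lambda\right) = \frac{2}{\pi}\int_0^\infty BR(\lambda)\phi\,d\lambda = F\phi$$
for every $\phi\in\mathcal{E}^1$, the last equality being Definition \ref{def:definitionofF}. Density of $\mathcal{E}^1$ in $\mathcal{E}^0$ together with boundedness of $F$ and of $BR(0)^{1/2}$ then gives $F=BR(0)^{1/2}$ on all of $\mathcal{E}^0$. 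I expect the main obstacle to be the middle step: the a priori estimate $\norm{BR(\lambda)}_{\mathcal{L}(\mathcal{E}^0)}=O((d+\lambda^2)^{-1/2})$ coming from the earlier lemma on $BR(\lambda)$ is not integrable, so one genuinely has to exploit $\phi\in\mathcal{E}^1$ and the commutator identity to gain the extra power of $(d+\lambda^2)^{-1}$ needed to run the integral in $\mathcal{E}^1$; once this is in hand, commuting $B$ past the integral and passing to the limit are routine.
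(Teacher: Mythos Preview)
Your proof is correct and follows the same overall strategy as the paper: verify $F\phi=BR(0)^{1/2}\phi$ on the dense submodule $\mathcal{E}^1$ by arranging for the integral $\frac{2}{\pi}\int_0^\infty R(\lambda)\phi\,d\lambda$ to converge in $\mathcal{E}^1$, so that the bounded map $B\colon\mathcal{E}^1\to\mathcal{E}^0$ can be pulled through, and then extend by continuity.

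The difference lies in how $\mathcal{E}^1$-convergence is obtained. The paper works with the \emph{operator-valued} Bochner integral $\big(\int_0^\infty R(\lambda)\,d\lambda\big)_1$ taken in $\mathcal{L}(\mathcal{E}^1)$, and must then (i) argue that this agrees on $\mathcal{E}^1$ with the square root computed in $\mathcal{L}(\mathcal{E}^0)$, and (ii) invoke a separate lemma (via Neumann series and Pettis' theorem) to ensure strong measurability of $\lambda\mapsto R(\lambda)$ in the non-separable space $\mathcal{L}(\mathcal{E}^0,\mathcal{E}^1)$. Your route is more elementary: for a fixed $\phi\in\mathcal{E}^1$ you estimate the \emph{vector-valued} integrand $R(\lambda)\phi$ directly in $\mathcal{E}^1$, using the commutator identity $[B,R(\lambda)]=-R(\lambda)c(df)R(\lambda)$ (already established as a lemma) to bound $\norm{BR(\lambda)\phi}_{\mathcal{E}^0}$ by an integrable function of $\lambda$. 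This sidesteps the operator-valued integral entirely, and measurability is immediate from continuity of $\lambda\mapsto R(\lambda)\phi$ in $\mathcal{E}^1$. The trade-off is that the paper's argument, once the machinery is set up, identifies the integral with $R(0)^{1/2}$ via functional calculus in one stroke, whereas you must pass through the inclusion $\mathcal{E}^1\hookrightarrow\mathcal{E}^0$ to make that identification; but this is routine, as you note.
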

	\begin{proof}
		Recall that bounded linear maps commute with Bochner integration (\cite{Border} Lemma 11.45). Since $F$ is the continuous extension of $$\mathcal{E}^1\ni\psi\mapsto\frac{2}{\pi}\int_0^\infty BR(\lambda)\psi\,d\lambda\in\mathcal{E}^0,$$
		$F$ acts on a general element $\phi=\lim_{n\rightarrow\infty}\psi_n\in\mathcal{E}^0$, where $\psi_n\in\mathcal{E}^1$, by 
		$$F\phi = \lim_{n\rightarrow\infty}\frac{2}{\pi}\int_0^\infty BR(\lambda)\psi_n\,d\lambda.$$ Since $R(0)^{1/2}$ is bounded $\mathcal{E}^0\rightarrow\mathcal{E}^1$,
		\begin{align*}
		BR(0)^{1/2}\phi = BR(0)^{1/2}\lim_{n\rightarrow\infty}\psi_n= B\lim_{n\rightarrow\infty}R(0)^{1/2}\psi_n,
		\end{align*}
		where the second limit is taken in $\mathcal{E}^1$. This is equal to
		\begin{align*}
		\lim_{n\rightarrow\infty}BR(0)^{1/2}\psi_n&= \lim_{n\rightarrow\infty}\frac{2B}{\pi}\left(\int_0^\infty R(\lambda)\,d\lambda\right)_0\psi_n,
		\end{align*}
		where the integration is performed in either $\mathcal{L}(\mathcal{E}^0)$ or $\mathcal{L}(\mathcal{E}^1)$. This equals 
		$$\lim_{n\rightarrow\infty}\frac{2B}{\pi}\int_0^\infty R(\lambda)\psi_n\,d\lambda,$$ since pairing with $\psi_n$ is a bounded linear map $\mathcal{L}(\mathcal{E}^1)\rightarrow\mathcal{E}^1$. Since the bounded operator $B\colon\mathcal{E}^1\rightarrow\mathcal{E}^0$ commutes with integration in $\mathcal{E}^1$, this equals
		\begin{align*}BR(0)^{1/2}\phi &= \frac{2}{\pi}\lim_{n\rightarrow\infty}\int_0^\infty BR(\lambda)\psi_n\,d\lambda.\qedhere\end{align*}
	\end{proof}
	This gives another proof of:
	\begin{corollary}
		$F = BR(0)^{1/2}\colon\mathcal{E}^0\rightarrow\mathcal{E}^0$ is an odd operator.
	\end{corollary}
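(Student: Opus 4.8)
The plan is to give the ``another proof'' by checking directly that the composite $BR(0)^{1/2}$ anticommutes with the grading, without invoking Definition~\ref{def:definitionofF}. Write $\gamma\in\mathcal{L}(\mathcal{E}^0)$ for the self-adjoint unitary implementing the $\mathbb{Z}_2$-grading induced by $E=E^+\oplus E^-$ (so $\gamma$ restricts to $\pm 1$ on $\mathcal{E}^0(E^\pm)$), and call an adjointable operator $T$ \emph{even} if $\gamma T=T\gamma$ and \emph{odd} if $\gamma T=-T\gamma$. By hypothesis $B=B^-\oplus B^+$ is odd, i.e.\ $\gamma B=-B\gamma$ as operators $\mathcal{E}^{j+1}\to\mathcal{E}^j$.

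First I would record that $B^2$ is even: on $\mathcal{E}^2$, $\gamma B^2=(\gamma B)B=-B(\gamma B)=B^2\gamma$. Since $f$ is multiplication by a scalar-valued function it preserves the decomposition of $E$, hence commutes with $\gamma$, and so $B^2+f+\lambda^2\in\mathcal{L}(\mathcal{E}^2,\mathcal{E}^0)$ is even for every $\lambda$. Conjugating the identity $(B^2+f+\lambda^2)R(\lambda)=\mathrm{id}$ by the invertible $\gamma$ and using uniqueness of the two-sided inverse then shows $\gamma R(\lambda)\gamma^{-1}=R(\lambda)$ whenever $R(\lambda)$ exists; in particular $R(0)\in\mathcal{L}(\mathcal{E}^0)$ is even. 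Next I would pass to the square root. We know $R(0)$ is a positive element of the $C^*$-algebra $\mathcal{L}(\mathcal{E}^0)$ (its spectrum lies in $[0,1/d]$, cf.\ the proof of Lemma~\ref{resolventofresolvent}) and $R(0)^{1/2}$ is obtained by applying $t\mapsto\sqrt t$ through the continuous functional calculus inside $\mathcal{L}(\mathcal{E}^0)$. Since $\mathrm{Ad}(\gamma)$ is a $*$-automorphism of $\mathcal{L}(\mathcal{E}^0)$ fixing $R(0)$, it commutes with the functional calculus, so $\gamma R(0)^{1/2}\gamma^{-1}=(\gamma R(0)\gamma^{-1})^{1/2}=R(0)^{1/2}$; that is, $R(0)^{1/2}$ is even. (One can also see this by approximating $\sqrt t$ uniformly on $[0,1/d]$ by polynomials vanishing at $0$, each giving an evidently even operator.) Finally, composing the odd operator $B$ with the even operator $R(0)^{1/2}$ gives $\gamma\,BR(0)^{1/2}=(\gamma B)R(0)^{1/2}=-B(\gamma R(0)^{1/2})=-BR(0)^{1/2}\gamma$, so $F=BR(0)^{1/2}$ is odd.

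The computation is essentially formal, so there is no serious obstacle; the only points needing a little care are (i) that $\mathrm{Ad}(\gamma)$ commutes with the continuous functional calculus of the self-adjoint $R(0)$, which is standard $C^*$-algebra theory applied inside $\mathcal{L}(\mathcal{E}^0)$, and (ii) the bookkeeping that $B$ only maps $\mathcal{E}^1\to\mathcal{E}^0$, so the parity identities for $B$ and the concluding computation are first carried out on the dense submodule $\mathcal{E}^1$ --- on which $R(0)^{1/2}$ takes values --- and then extended by continuity to all of $\mathcal{E}^0$, using that $F,\gamma\in\mathcal{L}(\mathcal{E}^0)$.
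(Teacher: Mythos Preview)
Your proof is correct and follows essentially the same approach as the paper: the paper's argument is simply that the functional calculus of an even operator is even, so $R(0)^{1/2}$ is even, and the composition of the odd $B$ with the even $R(0)^{1/2}$ is odd. You have expanded the details (explicitly verifying $B^2+f+\lambda^2$ is even, invoking $\mathrm{Ad}(\gamma)$ as a $*$-automorphism to pass evenness through the square root, and noting the domain bookkeeping), but the underlying idea is identical.
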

	\begin{proof}
		The functional calculus of an even operator is even, hence $R(0)^{1/2}$ is even. $F$ is the composition of the odd operator $B$ with $R(0)^{1/2}$.
	\end{proof}
	\begin{remark}
		To make sense of the Bochner integrals of $R(\lambda)$ used above in the context of the non-separable Banach spaces $\mathcal{L}(\mathcal{E}^0,\mathcal{E}^1)$ and $\mathcal{L}(\mathcal{E}^1)$, it is necessary for the integrand to be a strongly measurable function of $\lambda\in[0,\infty)$. By Pettis' measurability theorem (\cite{Pettis} Theorem 1.1) and the fact that $\lambda\mapsto R(\lambda)$ is continuous, it suffices to show that the image $R(\lambda)$ for all $\lambda$ is contained in a closed separable subspace of the codomain. Indeed this follows by expanding $R(\lambda)$ as a Neumann series over countably many intervals, as shown in the following lemma, which we state for $\mathcal{L}(\mathcal{E}^0,\mathcal{E}^1)$, but also holds for $\mathcal{L}(\mathcal{E}^1)$.
		\begin{lemma}
			The image of the map $[0,\infty)\rightarrow\mathcal{L}(\mathcal{E}^0,\mathcal{E}^1),$ $\lambda\mapsto R(\lambda)$ lies in a separable subspace of $\mathcal{L}(\mathcal{E}^0,\mathcal{E}^1)$.
		\end{lemma}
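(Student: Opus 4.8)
The plan is to exploit the Neumann-series description of the resolvent already produced in the proof of Lemma~\ref{resolventlemma}. That argument yields a sequence of base points $0=\Lambda_0<\Lambda_1<\Lambda_2<\cdots$ with associated intervals
\[
I_n\coloneqq\{\lambda\ge 0:\ |\lambda^2-\Lambda_n^2|<d+\Lambda_n^2\},
\]
and shows that on $I_n$ one has the expansion
\[
R(\lambda)=R(\Lambda_n)\sum_{i=0}^{\infty}(\Lambda_n^2-\lambda^2)^i R(\Lambda_n)^i ,
\]
the series converging in the operator norm of $\mathcal{L}(\mathcal{E}^0)$. First I would record that the $\Lambda_n$ may be chosen so that the $I_n$ cover $[0,\infty)$: each step of the iteration in the proof of Lemma~\ref{resolventlemma} enlarges the region on which $R$ is known to exist from $\{\lambda^2<a\}$ to one containing $\{\lambda^2<2a+d\}$ — take $\Lambda_n^2$ just below the current threshold $a$ — so, starting from the threshold $d>0$ supplied by Lemma~\ref{lem:d}, the thresholds grow to $+\infty$ and countably many intervals suffice. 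I would also note that each $R(\Lambda_n)$ is an element of $\mathcal{L}(\mathcal{E}^0,\mathcal{E}^1)$: Lemma~\ref{resolventlemma}(a) places it in $\mathcal{L}(\mathcal{E}^0,\mathcal{E}^2)$, and the inclusion $\mathcal{E}^2\hookrightarrow\mathcal{E}^1$ is bounded.

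Granting this, the rest is routine separability bookkeeping. For each $n$ let $V_n\subseteq\mathcal{L}(\mathcal{E}^0)$ be the closed linear span of the countable set $\{R(\Lambda_n)^i:i\ge 0\}$; then $V_n$ is separable, since the finite $(\Q+i\Q)$-linear combinations of the $R(\Lambda_n)^i$ are dense in it. For $\lambda\in I_n$ the operator $\sum_{i\ge 0}(\Lambda_n^2-\lambda^2)^i R(\Lambda_n)^i$ is a norm limit of partial sums lying in $V_n$, hence lies in $V_n$. Since left composition by the fixed operator $R(\Lambda_n)$ is a bounded linear map $\mathcal{L}(\mathcal{E}^0)\to\mathcal{L}(\mathcal{E}^0,\mathcal{E}^1)$, the closure $W_n$ of the image of $V_n$ under it is a separable subspace of $\mathcal{L}(\mathcal{E}^0,\mathcal{E}^1)$, and the displayed expansion gives $R(\lambda)\in W_n$ for all $\lambda\in I_n$. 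Finally I would take $W$ to be the closed linear span of $\bigcup_n W_n$; a countable union of separable sets is separable and the closed linear span of a separable set is separable, so $W$ is a separable closed subspace of $\mathcal{L}(\mathcal{E}^0,\mathcal{E}^1)$, and it contains $R(\lambda)$ for every $\lambda\ge 0$ because the $I_n$ exhaust $[0,\infty)$. The statement for $\mathcal{L}(\mathcal{E}^1)$ follows by the same argument, using that each $R(\Lambda_n)$ also defines a bounded operator $\mathcal{E}^1\to\mathcal{E}^1$.

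I do not anticipate a genuine obstacle; the only points that need care are extracting from the proof of Lemma~\ref{resolventlemma} an explicit countable family of intervals that actually covers $[0,\infty)$, and keeping precise track of the Hilbert modules between which the various operators act, so that $R(\Lambda_n)\cdot(\text{Neumann series})$ is read as a map $\mathcal{E}^0\to\mathcal{E}^1$ and not merely $\mathcal{E}^0\to\mathcal{E}^0$. For what it is worth, there is also a shortcut: the resolvent identity shows that $\lambda\mapsto R(\lambda)$ is norm-continuous as a map $[0,\infty)\to\mathcal{L}(\mathcal{E}^0,\mathcal{E}^1)$, and the continuous image of the separable space $[0,\infty)$ is separable, so its closed linear span already does the job — but since the route via the Neumann series has been announced above, that is the one I would write out in detail.
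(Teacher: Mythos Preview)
Your proposal is correct and follows essentially the same route as the paper: both extract from the Neumann-series iteration in Lemma~\ref{resolventlemma} a countable family of base points (the paper uses the explicit choice $\Lambda_k=\sqrt{(k-1)d/2}$) whose associated intervals cover $[0,\infty)$, and observe that on each such interval $R(\lambda)$ lies in the closed span of the powers $\{R(\Lambda_k)^i\}_{i\ge 0}$. Your tracking of the target space $\mathcal{L}(\mathcal{E}^0,\mathcal{E}^1)$ via post-composition with $R(\Lambda_n)$ is if anything more explicit than the paper's, and the continuity shortcut you note at the end is valid but not invoked there.
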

		\begin{proof}
			Define a sequence $(\Lambda_k)_{k\in\mathbb{N}}$ by $\Lambda_k\coloneqq\sqrt{\frac{(k-1)d}{2}}$, with $d$ as in Lemma \ref{lem:d}. From the proof of Lemma \ref{resolventlemma} one sees that, for $k\geq 1$ and $\lambda\in [a_k,a_{k+1}]\eqqcolon J_k$, $$R(\lambda)=R(\Lambda_k)\left(\sum_{i=0}^\infty (\Lambda_k^2-\lambda^2)^i R(\Lambda_k)^i\right).$$ 
			Thus for $\lambda$ belonging to each of the countably many intervals $J_k$, $k\in\mathbb{N}$, the resolvent $R(\lambda)$ belongs to the closure of the span of $\mathcal{A}_k\coloneqq\{R(\Lambda_k)^i\colon i\geq 0\}$. It follows that the closure of the span of $\bigcup_{k\in\mathbb{N}}\mathcal{A}_k,$ contains  $R(\lambda)$ for all $\lambda\in [0,\infty)$.
		\end{proof}
	\end{remark}
	\subsection{$G$-invertible Operators as $KK$-elements}
	\label{subsec:GInvertibleKK}
	Given the results of the previous section, we can now complete the proof of:
\begin{theorem}
	\label{thm:CalliasKKClass}
	Let $(G,M,E)$ be a $G$-triple equipped with an operator $B$ that is $G$-invertible at infinity. Let $F$ be the bounded transform of $B$ defined using a cocompactly supported function $f$, as in Definition \ref{def:definitionofF}. Then $(\mathcal{E}^0,F)$ is a Kasparov module over the pair of $C^*$-algebras $(\mathbb{C},C^*(G))$. The class $[\mathcal{E}^0,F]\in KK(\mathbb{C},C^*(G))$ is independent of the choice of the function $f$.
\end{theorem}
\begin{proof}
In view of the previous results, it remains to prove the last assertion. This follows from Theorem \ref{thm:G-Rellich} and the computation in the proof of \cite{Bunke} Lemma 1.10.
\end{proof}
%

	The image of $[\mathcal{E}^0,F]$ under the isomorphism (see \cite{Blackadar} 17.5.5) $$KK(\mathbb{C},C^*(G))\cong K_0(C^*(G))$$ coincides with $\ind_G(F)$ defined after Theorem \ref{thm:G-Index}, hence we will denote this map also by $\ind_G$. 
\begin{remark}
	With $F$ as above, one can in fact show that $[F,h]\in\mathcal{K}(\mathcal{E}^0)$ for all $h$ in $C_g^G(M)$, where $C_g^G(M)$ is the space of continuous functions on the Higson $G$-compactification of $M$ (see section \ref{sec:Phi}). Indeed, it suffices to establish this for all $h\in C_g^{\infty,G}(M)$. One can, for example, proceed as in \cite{Bunke} Lemma 1.12, replacing $C_g^{\infty}(M)$ with $C_g^{\infty,G}(M)$. However, we give a simpler proof is as follows. First note that $hR(0)^{1/2}$ and $R(0)^{1/2}h$ differ by the operator
	$$\frac{2}{\pi}\int_0^\infty R(\lambda)(Bc(d h)+c(dh)B)R(\lambda)\,d\lambda.$$
	Since $h\in C_g^{\infty,G}(M)$, $\norm{dh}_{T^*M}\in C_0^{\infty,G}(M)$. Thus $dh$ can be approximated by cocompactly supported endomorphisms, for which the integral converges absolutely. On the other hand, $hB$ and $Bh$ differ by $c(dh)$, which is again a limit of cocompactly supported endomorphisms. Thus $hBR(0)^{1/2}-BR(0)^{1/2}h$ is compact.
\end{remark}
	\vspace{1cm}

	\section{$G$-Callias-type Operators}
\label{sec:G-Callias}

\noindent We now define, for $G$ a general Lie group, {\it $G$-invariant Callias-type operators} and prove that they are $G$-invertible at infinity. This notion generalises the Callias-type operators studied in \cite{Bunke} section 2 to the equivariant, non-cocompact setting.

\begin{definition}
	\label{def:Gadmissible}
	Let $E\rightarrow M$ be a $\mathbb{Z}_2$-graded $G$-Clifford bundle with Dirac operator $D$. An odd-graded, $G$-invariant endomorphism $\Phi\in C^1(M,\End{E})$ is called {\it $G$-admissible for $D$} (or simply {\it $G$-admissible}) if
	\begin{enumerate}[(a)]
		\item $\Phi D + D\Phi$ is a bounded, order-0 bundle endomorphism; 
		\item $\Phi$ is self-adjoint with respect to the inner product on $E$;
		\item there exists a cocompact subset $K\subseteq M$ and $C>0$ such that 
		$$\Phi D + D\Phi + \Phi^2\geq C \textnormal{ on } M\backslash K.$$
	\end{enumerate}
\end{definition}
For example, let $N$ be a cocompact $G$-equivariantly spin manifold and $D$ the spin-Dirac operator. Let $\chi:\mathbb{R}\rightarrow\mathbb{R}$ be the identity function, and extend it naturally to a function $\tilde{\chi}$ on $N\times\mathbb{R}$. Then multiplication by $\tilde{\chi}$ is an endomorphism on the spinor bundle that is $G$-admissible for $D$.

In subsection \ref{subsec:ConstructingPhi}, we will use an equivariant version of the Higson corona of $M$ to construct more examples of $G$-admissible endomorphisms.

\begin{definition}
	Let $E$ and $D$ be as in Definition \ref{def:Gadmissible}. A {\it $G$-invariant Callias-type operator} (or simply a {\it $G$-Callias-type operator}) is an operator of the form $$B\coloneqq D+\Phi,$$
	where the endomorphism $\Phi$ is $G$-admissible for $D$.
\end{definition}

We first show that such an operator is $G$-invertible at infinity and hence $C^*(G)$-Fredholm. 
\subsection{Positivity of $B^2+f$}
Let $B=D+\Phi$ be a $G$-Callias-type operator. We first show that there exists a cocompactly supported, $G$-invariant function $f$ such that $B^2+f$ is a positive unbounded operator with respect to the $C^*(G)$-valued inner product on $\mathcal{E}^0$.
\begin{lemma}
	Let $B = D+\Phi$ be a $G$-Callias-type operator on $E\rightarrow M$. Then there exist a $G$-invariant, cocompactly supported function $f$ and a constant $C>0$ such that for all $s\in H^2(E)$,
	$$\langle(B^2+f)s,s\rangle_{H^0}\geq C\langle s,s\rangle_{H^0}.$$
\end{lemma}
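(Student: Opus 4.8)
The plan is to expand $B^2 = (D+\Phi)^2$ and use the three defining conditions of $G$-admissibility to produce the required function $f$. First I would write
$$B^2 = D^2 + (D\Phi + \Phi D) + \Phi^2.$$
By condition (c) there is a cocompact set $K \subseteq M$ and $C_0 > 0$ with $D\Phi + \Phi D + \Phi^2 \geq C_0$ on $M \setminus K$. Since $D^2 \geq 0$ (as $D$ is a formally self-adjoint Dirac operator), on $M \setminus K$ we have $B^2 \geq C_0$ pointwise as a bundle endomorphism, hence $\langle B^2 s, s\rangle_{H^0} \geq C_0 \langle s, s\rangle_{H^0}$ for sections supported there.

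Next I would deal with the cocompact set $K$. Choose a $G$-invariant, smooth, cocompactly supported function $f \colon M \to [0,\infty)$ that is $\geq C_0 + \norm{A}_\infty$ on a cocompact neighbourhood of $K$, where $A := D\Phi + \Phi D$ is the bounded order-$0$ endomorphism from condition (a); such an $f$ exists because $K$ is contained in a cocompact set and one can build $G$-invariant bump functions by averaging over $G$ against the cut-off function $\mathfrak{c}$ (the integral $\int_G \mathfrak{c}(g^{-1}x)\,dg = 1$ guarantees the average of a compactly supported bump is a cocompactly supported $G$-invariant function bounded below on the relevant region). On $K$ (and its neighbourhood) we then estimate $B^2 + f = D^2 + A + \Phi^2 + f \geq 0 + A + 0 + f \geq C_0$ pointwise, using $D^2, \Phi^2 \geq 0$ and $A + f \geq A + \norm{A}_\infty + C_0 \geq C_0$. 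Off $K$ we already have $B^2 \geq C_0$, so adding $f \geq 0$ preserves this. Combining the two regions via a partition argument (or simply noting the pointwise bound $B^2 + f \geq C_0$ holds everywhere once $f$ is chosen as above), we get the pointwise operator inequality $B^2 + f \geq C_0$ on all of $M$, which integrates to $\langle (B^2+f)s, s\rangle_{H^0} \geq C_0 \langle s, s\rangle_{H^0}$ for all $s \in H^2(E)$. Taking $C = C_0$ finishes the proof.

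The main technical point to be careful about is the transition across $\partial K$: condition (c) only gives the lower bound $D\Phi + \Phi D + \Phi^2 \geq C_0$ on the complement of $K$, while on $K$ itself the cross term $D\Phi + \Phi D$ could be very negative relative to $\Phi^2$. This is exactly why $f$ must be chosen to dominate the bounded endomorphism $A = D\Phi + \Phi D$ on a full cocompact neighbourhood of $K$ — boundedness of $A$ from condition (a) is what makes a cocompactly supported $f$ suffice rather than requiring $f$ unbounded. The only other thing to verify is that such a $G$-invariant $f$ genuinely exists and is cocompactly supported; this is routine given properness of the action and the existence of the cut-off function $\mathfrak{c}$, but it is worth stating explicitly since it is the one place the $G$-structure (as opposed to a purely local computation) enters.
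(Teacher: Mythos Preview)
Your proposal is correct and follows essentially the same route as the paper: expand $B^2=D^2+(D\Phi+\Phi D)+\Phi^2$, use $D^2\geq 0$, invoke condition (c) off the cocompact set $K$, and on $K$ add a large nonnegative $G$-invariant cocompactly supported function to force the zero-order part $D\Phi+\Phi D+\Phi^2+f\geq C$ everywhere. The only cosmetic difference is that the paper constructs $f$ by passing to the quotient $M/G$ (where $\pi(K)$ is compact and one can take a compactly supported bump on $M/G$), whereas you build $f$ on $M$ by $G$-averaging a bump against the cut-off $\mathfrak{c}$; both are standard and equivalent. One phrasing to tighten: writing ``$B^2\geq C_0$ pointwise as a bundle endomorphism'' is not quite right since $D^2$ is a differential operator --- what you mean (and what the rest of your argument uses) is that the \emph{zero-order} endomorphism $D\Phi+\Phi D+\Phi^2+f$ satisfies the pointwise bound, and then $\langle D^2 s,s\rangle_{H^0}\geq 0$ handles the top-order part.
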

\begin{proof}
	Let $\pi\colon M\rightarrow M/G$ be the projection. The $G$-bundle $E$ descends to a topological vector bundle $\check{E}$ over $M/G$, while the $G$-invariant bundle map $\Phi D + D\Phi + \Phi^2$ descends to a continuous bundle map $\chi$ on $\check{E}$. Let $K$ be the cocompact subset in Definition \ref{def:Gadmissible} (c). Then $\Phi D + D\Phi + \Phi^2$ is bounded below by the same constant as for $\chi$ over $\pi(K)$, namely
	$$\inf_{x\in \pi(K)}\left(\inf_{v\in \check{E}_x}\left(\frac{\langle \chi v,v\rangle}{\norm{v}^2}\right)\right) \geq \inf_{x\in \pi(K)}(-\norm{\chi}).$$
	
	\noindent Adding a sufficiently large, compactly supported function $\check{f}\colon M/G\rightarrow\mathbb[0,\infty)$ to $\Phi D+D\Phi+\Phi^2$ makes it positive on $K$. The result now follows by taking $f$ to be the pullback of $\check{f}$ to $M$, observing that on $M\backslash K$ we have $\Phi D + D\Phi + \Phi^2\geq C$, where $C$ is the constant in Definition \ref{def:Gadmissible} (c).
\end{proof}
The above lemma and Corollary \ref{prop:Bibounded} imply that, given a $G$-Callias-type operator $B$, $B^2+f$ defines an element of $\mathcal{L}(\mathcal{E}^2,\mathcal{E}^0)$. We now show that this operator is positive in the sense of $C^*(G)$. The proof is inspired by Kasparov's proof of \cite{Kasparov} Lemma 5.3.
\begin{proposition}
	\label{prop:Positivity}
	Let $(G,M,E)$ be a $\mathbb{Z}_2$-graded $G$-triple and $B$ a $G$-Callias-type operator on $E$. Then there exists a $G$-invariant cocompactly supported function $f$ and a constant $C>0$ such that for all $s\in\mathcal{E}^2$,
	$$\langle(B^2+f)s,s\rangle_{\mathcal{E}^0}\geq C\langle s,s\rangle_{\mathcal{E}^0}.$$
\end{proposition}
\begin{proof}
	Let $B=D+\Phi$ and $\mathfrak{c}$ be a cut-off function on $M$. Since $\langle D^2 e,e\rangle_{\mathcal{E}^0}\geq 0$ for all $e\in C_c^\infty(E)$, it suffices to show that there exist $f$ and $C>0$ such that
	$$\langle (B^2+f-D^2)e,e\rangle_{\mathcal{E}^0} = \langle (D\Phi+\Phi D+\Phi^2+f)e,e\rangle_{\mathcal{E}^0}\geq C\langle e,e\rangle_{\mathcal{E}^0}.$$
	In other words, we want to show
	$$\langle (D\Phi+\Phi D+\Phi^2+f-C)e,e\rangle_{\mathcal{E}^0}\in C^*(G)_+.$$ Choose $f$ as in the previous lemma, so that $D\Phi+\Phi D+\Phi^2+f-C$ is a bounded positive operator on $L^2(E)$. Since $G$ acts by unitaries on $L^2(E)$, and conjugating by unitaries preserves the functional calculus, $D\Phi+\Phi D+\Phi^2+f-C$ has a bounded, $G$-invariant positive square root $Q$. Now the operator $Q\mathfrak{c}Q$ has cocompactly compactly supported Schwartz kernel 
	$$k_{Q\mathfrak{c}Q}(x,y)=\int_M k_Q(x,z)\mathfrak{c}(z)k_Q(z,y)\,d\mu(z),$$ 
	since $\mathfrak{c}$ has cocompactly compact support. Let $\tilde{K}$ be a cocompactly compact subset of $M$ such that supp$(k_{Q\mathfrak{c}Q})\subseteq\tilde{K}\times\tilde{K}$. Define $a\colon G\rightarrow\mathbb{\mathbb{R}}$ to be the function taking $g\in G$ to:
	$$\langle Q\mathfrak{c}Qg(e),g(e)\rangle_{H^0(E)}=\int_M\bigg\langle\int_M k_{Q\mathfrak{c}Q}(x,y)g(e)(y)\,d\mu(y),g(e)(x)\bigg\rangle_E d\mu(x).$$
	Then $\textnormal{supp}(a)$ is contained in
	$$\{g\in G\,|\,\textnormal{supp}(g(e))\cap\tilde{K}\neq\emptyset\}\subseteq\{g\in G\,|\,\textnormal{supp}(g(e))\cap G\cdot\textnormal{supp}(e)\cap\tilde{K}\neq\emptyset\}.$$
	Since the set $G\cdot\textnormal{supp}(e)\cap\tilde{K}$ is compact, $\textnormal{supp}(a)$ is a compact subset of $G$, by properness of the $G$-action. Thus the map $G\mapsto H^0(E)$, $g\mapsto\sqrt{\mathfrak{c}}Q(g(e))$ has compact support in $G$. It follows that for any unitary representation of $G$ on a Hilbert space $(H,(\,,\,)_H)$ and $h\in H$,
	$$v\coloneqq \int_G \mu^{-1/2}(g)\sqrt{\mathfrak{c}}Q(g(e))\otimes g(h)\,dg$$
	is a well-defined vector in $H^0(E)\otimes H$. Its norm $\norm{v}_{H^0(E)\otimes H}$ is equal to
	\begin{align*}
	&\int_G\int_G\mu^{-1/2}(g')\mu^{-1/2}(g)\langle\sqrt{\mathfrak{c}}Qg(e),\sqrt{\mathfrak{c}}Q(g'(e))\rangle_{H^0}\cdot(g(h),g'(h))_H\,dg\,dg'\\
	&=\int_G\int_G\mu^{-1/2}(g)\mu^{-1/2}(g')\langle Q\mathfrak{c}Qg(e),g'(e)\rangle_{H^0}\cdot(g(h),g'(h))_H\,dg'\,dg.
	\end{align*}
	Since $\langle\,,\,\rangle_{H^0(E)}$ and $(\,,\,)_H$ are $G$-invariant (see Remark \ref{rem:unitary}), this equals
	$$\int_G\int_G\mu^{-1/2}(g)\mu^{-1/2}(g')\langle g'^{-1}(Q\mathfrak{c}Q)g(e),e\rangle_{H^0}\cdot(g'^{-1}g(h),h)_H\,dg'\,dg.$$
	$$=\int_G\int_G\mu^{-1/2}(g)\mu^{-1/2}(g')\langle g^{-1}(Q\mathfrak{c}Q)g'^{-1}g(e),e\rangle_{H^0}\cdot(g'^{-1}g(h),h)_H\,dg'\,dg.$$
	Substituting $g'\mapsto gg'$ and following simliar computations to the proof of \cite{Kasparov} Lemma 5.3, one sees that this is equal to		$$\int_G\bigg\langle\left(\int_G g(Q\mathfrak{c}Q)\,dg\right)(e),e\bigg\rangle_{\mathcal{E}^0}(g')\cdot(g'(h),h)_H\,dg'.$$
	Thus $\langle(\int_G g(Q\mathfrak{c}Q)\,dg)(e),e\rangle_{\mathcal{E}^0}$ is a positive operator on $H$ for all unitary representations of $G$, where we let $f\in C_c^\infty(G)$ act on $H$ by 
	$$f\cdot h\coloneqq \int_G f(g)g(h)\,dg.$$
	It follows that the element
	$$\langle (D\Phi+\Phi D+\Phi^2+f-C)e,e\rangle_{\mathcal{E}^0}=\langle Q^2 e,e\rangle_{\mathcal{E}^0}=\bigg\langle\bigg(\int_G g(Q\mathfrak{c}Q)\,dg\bigg)(e),e\bigg\rangle_{\mathcal{E}^0}$$
	is in $C^*(G)_+$, where we have used the fact that, since $Q$ is $G$-invariant,
	\begin{align*}\int_G g(Q\mathfrak{c}Q)\,dg &= Q^2.\qedhere\end{align*}
\end{proof}
Next, we show that $B$ has a regular self-adjoint extension. In fact, we show that $B$ is essentially self-adjoint with regular closure.
\subsection{Essential Self-adjointness and Regularity of $B$}
\label{subsec:essentialregularity}
The notation in this subsection and the next will distinguish between the formally self-adjoint operator $B$ and its closure $\overline{B}$.
\begin{proposition}
	\label{prop:extensiontononcocompact}
	Let $(G,M,E)$ be a $\mathbb{Z}_2$-graded $G$-triple with $M$ complete. Let $B$ be a $G$-Callias-type operator on $E$. Then $B$ is essentially self-adjoint in $\langle\,,\,\rangle_{\mathcal{E}^0}$, and $\overline{B}$ is regular.
\end{proposition}
\begin{proof}
	When $(G,M,E)$ is cocompact with $M$ complete, the regularity and self-adjointness of $\overline{B}$ was established in \cite{Kasparov} Theorem 5.8. Now suppose $M$ is complete and non-cocompact. There exists a family $\{a_\epsilon\colon\epsilon > 0\}$ of compactly supported smooth functions taking values in $[0,1]$ satisfying:
	\begin{enumerate}[(i)]
	\item $\displaystyle\bigcup_{\epsilon>0}\{a_\epsilon^{-1}(1)\}=M;$
	\item $\displaystyle\sup_{x\in M}\norm{da_\epsilon (x)}\leq\epsilon;$
	\item $\displaystyle a_{\epsilon_1}^{-1}(1)\subseteq a_{\epsilon_2}^{-1}(1)\textnormal{ if }\epsilon_2\leq\epsilon_1.$
	\end{enumerate}

	Let $s\in C_c^\infty(E)$. Choose $\epsilon$ so that $a_\epsilon\equiv 1$ on supp$(s)$. Take an exhaustion of $M$ by cocompact, $G$-stable open subsets $\{U_k\colon k\in\mathbb{N}\}$ as in the proof of Proposition \ref{prop:Bbounded}, and pick $i$ large enough so that $U_i$ contains supp$(a_\epsilon)$. Now form the double $M'$ of the closure $\overline{U_i}$, using a $G$-equivariant collar neighbourhood (which exists by \cite{Kankaanrinta} Theorem 3.5), and extend the action of $G$ to $M'$ naturally. Then $M'$ is a cocompact $G$-Riemannian manifold without boundary, and all $G$-structures on $U_i$ naturally extend to $M'$ also.
	
	In particular, this gives a $G$-Callias-type operator $B'$, acting on $E'\rightarrow M'$. Since $M'$ is cocompact, the closure $\overline{B'}$ is regular and self-adjoint. Now use $\overline{B'}$ to form $G$-Sobolev modules $\{\mathcal{E}'^{,i}\}$ on $M'$, following section \ref{sec:G-Sobolev}. Because $\overline{B'}+ i\colon\mathcal{E}'^{,1}\rightarrow\mathcal{E}'^{,0}$ is onto, we can find $e\in\mathcal{E}'^{,1}$ for which $(\overline{B'}+i)e=s$. We have $\langle s,s\rangle_{\mathcal{E}^{,0}}\geq\langle e,e\rangle_{\mathcal{E}^{,0}}\in C^*(G)_+$ and
	\begin{align*}(\overline{B}+i)(a_\epsilon e)&=(\overline{B'}+i)(a_\epsilon x)\\
	&=[\overline{B'},a_\epsilon]e+a_\epsilon(\overline{B'}+i)e\\
	&=c(\nabla a_\epsilon)e+a_\epsilon(\overline{B'}+i)e.
	\end{align*}
	Note that $c(\nabla a_\epsilon)$ is a bounded operator on $\mathcal{E}^0$, but since $a_\epsilon$ was not assumed to be $G$-invariant, it could fail to be adjointable. Nevertheless, boundedness is enough, since it enables us to conclude 
	$$\norm{(\overline{B}+i)(a_\epsilon e)}_{\mathcal{E}^0} = \norm{c(\nabla a_\epsilon)e+a_\epsilon(\overline{B'}+i)e}_{\mathcal{E}^0}\leq(1+\epsilon)\norm{s}_{\mathcal{E}^0}.$$
	Taking a sequence $\epsilon_i \rightarrow 0$, this implies that $$(\overline{B}+i)(a_{\epsilon_i}e)\rightarrow a_{\epsilon_i}(\overline{B'}+i)e=a_{\epsilon_i} s = s,$$
	thus $\overline{B}+i$ has dense range.
\end{proof}
\subsection{$G$-invertibility at Infinity of $B$}
In this subsection we establish the key result:
\begin{theorem}
	\label{thm:CalliasInvertibility}
	Let $(G,M,E)$ be a $\mathbb{Z}_2$-graded $G$-triple with $B=D+\Phi$ a $G$-Callias-type operator. Then $B$ is $G$-invertible at infinity.
\end{theorem}
\begin{proposition}
	For $0\neq\mu\in\mathbb{R}$, $B^2+\mu^2\colon\mathcal{E}^2\rightarrow\mathcal{E}^0$ is bijective, with an inverse in $\mathcal{L}(\mathcal{E}^0,\mathcal{E}^2)$.
\end{proposition}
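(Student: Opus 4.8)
The plan is to reduce the statement to the regularity and self-adjointness of (the closure of) $B$ proved in \S\ref{subsec:essentialregularity}--\S\ref{subsec:incomplete}, and then to factor $B^2+\mu^2=(B-i\mu)(B+i\mu)$. Since $B$ is regular and self-adjoint on $\mathcal E^0$ with $\operatorname{dom}(B)=\mathcal E^1$, its spectrum is contained in $\mathbb R$ (see \cite{Lance}, \cite{Ebert}); hence for every $0\neq\mu\in\mathbb R$ the purely imaginary number $\pm i\mu$ lies in the resolvent set, so $B\pm i\mu\colon\mathcal E^1\to\mathcal E^0$ is a bijection, whose (a priori merely set-theoretic) inverse I will denote $R_\pm$. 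The goal is then to transfer this bijectivity to $B^2+\mu^2$ on $\mathcal E^2$.

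First I would carry out the domain bookkeeping. For $s\in\mathcal E^2=\operatorname{dom}(B^2)$ one has $Bs\in\mathcal E^1$ by definition, so $(B+i\mu)s=Bs+i\mu s\in\mathcal E^1$ and $B+i\mu$ sends $\mathcal E^2$ into $\mathcal E^1$; expanding $(B-i\mu)(B+i\mu)s$ and cancelling the cross terms gives $(B-i\mu)(B+i\mu)s=B^2s+\mu^2s$. Conversely, given $t\in\mathcal E^1$, the element $s\coloneqq R_+t\in\mathcal E^1$ satisfies $Bs=t-i\mu s\in\mathcal E^1$, hence $s\in\operatorname{dom}(B^2)=\mathcal E^2$; so $B+i\mu$ restricts to a bijection $\mathcal E^2\to\mathcal E^1$. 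Composing with the bijection $B-i\mu\colon\mathcal E^1\to\mathcal E^0$ shows that $B^2+\mu^2\colon\mathcal E^2\to\mathcal E^0$ is a bijection, with inverse $R_+R_-$.

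It remains to see that this inverse is bounded and adjointable, i.e.\ an element of $\mathcal L(\mathcal E^0,\mathcal E^2)$. For this note that $B^2+\mu^2$ already lies in $\mathcal L(\mathcal E^2,\mathcal E^0)$: it is $B^2$, adjointable by Corollary~\ref{prop:Bibounded}, plus $\mu^2$ times the adjointable inclusion $\mathcal E^2\hookrightarrow\mathcal E^0$. A bijective element $T$ of $\mathcal L(\mathcal E^2,\mathcal E^0)$ automatically has $T^{-1}\in\mathcal L(\mathcal E^0,\mathcal E^2)$: by the open mapping theorem $T^{-1}$ is bounded, so $T$ is bounded below, whence $T^*T$ is a positive invertible element of $\mathcal L(\mathcal E^2)$ and $T^{-1}=(T^*T)^{-1}T^*$ is adjointable (cf.\ \cite{Lance}). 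I expect the only genuinely delicate point to be the identification $\operatorname{dom}(B^2)=\mathcal E^2$ with equivalent norms --- equivalently, that $C_c^\infty(E)$ is a core for the square of the closure of $B$ and that the $\mathcal E^2$-norm is controlled by the graph norm of $B^2$ via the interpolation estimate $\norm{Bs}_{\mathcal E^0}^2\leq\norm{s}_{\mathcal E^0}\norm{B^2s}_{\mathcal E^0}$ --- which is exactly where the essential self-adjointness and regularity results of \S\ref{subsec:essentialregularity}--\S\ref{subsec:incomplete} are needed. Everything else is routine manipulation of resolvents of a regular self-adjoint operator.
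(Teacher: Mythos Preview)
Your argument is correct and follows essentially the same route as the paper: both reduce bijectivity of $B^2+\mu^2$ to the regularity and self-adjointness of $B$ established in \S\ref{subsec:essentialregularity}, and then invoke the open mapping theorem together with a general Hilbert-module fact that a bijective adjointable operator has an adjointable inverse. The only cosmetic difference is that you make the factorisation $B^2+\mu^2=(B-i\mu)(B+i\mu)$ explicit and verify the domain bookkeeping by hand, whereas the paper outsources surjectivity to \cite{Cecchini} and uses a slightly different (but equivalent) adjointability argument via invertibility of $T^*$ rather than your formula $T^{-1}=(T^*T)^{-1}T^*$.
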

\begin{proof}
	By part (e) of \cite{Cecchini} 3.3, surjectivity follows from the fact that $B$ has regular closure, which was shown in the previous subsection. For all $v\in\mathcal{E}^2$, we have $\norm{(B^2+\mu^2)v}_{\mathcal{E}^0}\geq\mu^2\norm{v}_{\mathcal{E}^0}.$ This implies injectivity. By the open mapping theorem, the inverse $(B^2+\mu^2)^{-1}$ is bounded. Adjointability is guaranteed, since from general theory one knows that a bounded inverse of an invertible bounded adjointable operator $T$ between Hilbert $A$-modules $\mathcal{M},$ $\mathcal{N}$ is must be adjointable. Indeed, it follows from \cite{BlackadarCStar} Theorem II.7.2.9 that $\textnormal{ran}(T^*)=\textnormal{ker}(T)^{\perp}=\{0\}=\mathcal{M}$ and $\textnormal{ker}(T^*)=\textnormal{ran}(T)^{\perp}=F^\perp=\{0\}.$ Hence $T^*$ has an inverse satisfying
	\begin{align*}
	\langle T^{-1}y,x\rangle_{\mathcal{M}}=\langle T^{-1}y,T^*(T^*)^{-1}x\rangle_{\mathcal{M}}=\langle y,(T^*)^{-1}x\rangle_{\mathcal{N}}.
	\end{align*}
	Thus the inverse of $T^{-1}$ is $(T^*)^{-1}$.
\end{proof}
\begin{remark}
	The formula $((B^2+\mu^2)^{-1})^*=B^2+(1-\mu^2)(B^2+\mu)^{-1}B^2+(B^2+\mu)^{-1}$ can be proved as in \cite{Cecchini} Proposition 4.9.
\end{remark}
\begin{proof}[Proof of Theorem~\ref{thm:CalliasInvertibility}]
	The analogue of the argument in \cite{Cecchini} subsection 4.10 also works in our situation, thus we only sketch the argument. First one shows that for $\mathbb{R}\ni\mu\neq 0$ and $f\colon M\rightarrow\mathbb{R}$ a $G$-invariant uniformly bounded smooth function with $\mu^2>\norm{f}_\infty$, the operator $B^2+\mu^2+f$ has an inverse in $\mathcal{L}(\mathcal{E}^0)\cap\mathcal{L}(\mathcal{E}^0,\mathcal{E}^2)$. This inverse is given by the Neumann series
	$$(B^2+\mu^2+f)^{-1}=(B^2+\mu^2)^{-1}\sum_{k=0}^\infty(-1)^{k}\left\{f(B^2+\mu^2)^{-1}\right\}^k.$$
	By Proposition \ref{prop:Positivity}, we can pick a $G$-invariant cocompactly supported function $f$ and a constant $C>0$ such that for all $s\in\mathcal{E}^2$, $\langle(B^2+f)s,s\rangle_{\mathcal{E}^0}\geq C\langle s,s\rangle_{\mathcal{E}^0}.$ Pick $\mu\neq 0$ such that $\mu^2>\norm{f}_{\infty}$. We have $$B^2+f=(1-\mu^2(B^2+\mu^2+f)^{-1})(B^2+\mu^2+f).$$ 
	The Cauchy-Schwartz inequality then yields $$\norm{\mu^2(B^2+\mu^2+f)^{-1}}_{\mathcal{L}(\mathcal{E}^0)}\leq\frac{\mu^2}{\mu^2+C}<1,$$ so that the Neumann series
	$$(B^2+f)^{-1}=(B^2+\mu^2+f)^{-1}\sum_{k=0}^{\infty}\left\{\mu^2(B^2+\mu^2+f)^{-1}\right\}^k$$ converges in norm to an element of $\mathcal{L}(\mathcal{E}^0)\cap\mathcal{L}(\mathcal{E}^0,\mathcal{E}^2).$
\end{proof}
	\vspace{1cm}

	\section{The Endomorphism $\Phi$ and the Higson Corona}
\label{sec:Phi}
We now construct $G$-admissible endomorphisms $\Phi$ using the $K$-theory of an equivariant version of the Higson corona used in \cite{Bunke}.

Let $C_b^G(M)$ denote the algebra of bounded, continuous $G$-invariant functions on $M$. Let $C_g^{\infty,G}(M)\subset C_b^G(M)$ denote the subalgebra of smooth functions $f$ such that for all $\epsilon > 0$, there exists a cocompact subset $M_0\subseteq M$ such that 
$$\norm{df}_{T^*M}<\epsilon$$ 
on $M\backslash M_0$. Let $C_0^G(M)$ be the algebra of $G$-invariant functions $f$ on $M$ such that for all $\epsilon > 0$ there exists a cocompact subset $M_1\subseteq M$ such that $|f|<\epsilon$ on $M\backslash M_1$. Thus  $C_g^{\infty,G}(M)$ consists of those bounded smooth $G$-invariant functions $f$ for which $\norm{df}_{T^*M}\in C_0^G(M)$. 
\begin{remark}
	In \cite{Bunke}, the notation $C_g(M)$ is used for the closure in $\norm{\,\cdot\,}_\infty$ of the smooth functions $f$ on $M$ for which $\norm{df}_{T^*M}\in C_0(M)$.
\end{remark}
\begin{definition}
	Let the algebra $C_g^{G}(M)$ be the completion of $C_g^{\infty,G}(M)$ in $C_b^G(M)$ with respect to $\norm{\,\cdot\,}_\infty$. Define 
	$$C(\partial_h^{G}(\overline{M}))= C_g^G(M)/C_0^G(M).$$ 
	The maximal ideal space $\overline{M}^G$ of $C_g^G(M)$ is called the {\it Higson $G$-compactification} of $M$. The maximal ideal space $\partial_h^G\overline{M}$ of $C(\partial_h^G\overline{M})$ is called the {\it Higson $G$-corona} of $M$. 
\end{definition}

The Higson $G$-compactification for $G=\{e\}$ was studied in \cite{Bunke}. Note that there, the notation $\partial_h M$ was used where we have used $\partial_h\overline{M}$.
\begin{lemma}
	$\overline{M}^G$ is a compactification of $M/G$.
\end{lemma}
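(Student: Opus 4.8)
The plan is to read the statement off from Gelfand duality together with one structural fact about $C_g^G(M)$. By definition $\overline{M}^G$ is the spectrum of the commutative unital $C^*$-algebra $C_g^G(M)$, so it is automatically compact and Hausdorff, and there is a canonical continuous map $\iota\colon M/G\to\overline{M}^G$, $Gx\mapsto(\,f\mapsto f(Gx)\,)$, well defined because every element of $C_g^G(M)$ is a $G$-invariant function on $M$ and hence descends to a continuous function on $M/G$. Since $G$ acts properly, $M/G$ is locally compact Hausdorff, the projection $\pi$ is open, and the sup-norm of a $G$-invariant function on $M$ equals that of the induced function on $M/G$; thus $C_g^G(M)$ sits isometrically in $C_b(M/G)$ as a unital $C^*$-subalgebra. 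Density of $\iota(M/G)$ is then automatic, since an element of $C_g^G(M)=C(\overline{M}^G)$ vanishing on $\iota(M/G)$ vanishes on all of $M/G$.

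The one substantive point is to show that $C_0^G(M)$ --- the algebra of $G$-invariant functions on $M$ vanishing at infinity, which is just $C_0(M/G)$ --- is contained in $C_g^G(M)$. Granting this, $C_0(M/G)$ is a closed ideal of $C_b(M/G)$, hence of $C_g^G(M)$, so the spectrum splits as $\overline{M}^G=\operatorname{spec}\bigl(C_0(M/G)\bigr)\sqcup\operatorname{spec}\bigl(C_g^G(M)/C_0(M/G)\bigr)$ with the first summand, namely $M/G$, embedded as an open subset; one checks that this embedding agrees with $\iota$ on $M/G$. Since $C_0(M/G)$ already separates points of $M/G$ and separates points from closed sets, $\iota$ is in addition a homeomorphism onto its image, and $\overline{M}^G\setminus\iota(M/G)$ is $\partial_h^G\overline{M}$. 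To prove the containment, note that $C_c(M/G)$ is dense in $C_0(M/G)$, so it suffices to approximate an arbitrary $\phi\in C_c(M/G)$ uniformly by elements of $C_g^{\infty,G}(M)$; in fact I approximate by $G$-invariant \emph{smooth cocompactly supported} functions, which lie in $C_g^{\infty,G}(M)$ because their differentials vanish off a cocompact set. Using the cut-off function $\mathfrak c$, write $\phi\circ\pi=\int_G g\bigl(\mathfrak c\cdot(\phi\circ\pi)\bigr)\,dg$, where $\mathfrak c\cdot(\phi\circ\pi)$ has compact support, being the product of a cocompactly compact support with a cocompact one. Mollify $\mathfrak c\cdot(\phi\circ\pi)$ on $M$ to a smooth function $\eta$ with support in a fixed compact set $N$ and $\norm{\eta-\mathfrak c\cdot(\phi\circ\pi)}_\infty$ small, and put $\widetilde\eta(z)\coloneqq\int_G\eta(g^{-1}z)\,dg$; this is smooth, $G$-invariant, and cocompactly supported.

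The estimate I expect to be the crux is $\norm{\widetilde\eta-\phi\circ\pi}_\infty\le\norm{\eta-\mathfrak c\cdot(\phi\circ\pi)}_\infty\cdot\sup_{z\in M}\bigl|\{g\in G: g^{-1}z\in N\}\bigr|$, which is useful only once the supremum on the right is known to be finite --- and this is precisely where properness of the action enters. By left-invariance of $dg$ the supremum is bounded by the Haar measure of $P\coloneqq\{g\in G: gN\cap N\ne\emptyset\}$, and $P$ is the image in $G$ of the compact set $\{(g,x)\in G\times M: gx\in N,\ x\in N\}$ --- compact because the map $(g,x)\mapsto(gx,x)$ is proper --- hence $P$ is compact and $|P|<\infty$. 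Taking the mollification fine enough then makes $\widetilde\eta$ uniformly as close to $\phi\circ\pi$ as desired, establishing $C_0(M/G)\subseteq C_g^G(M)$ and with it the claim that $\overline{M}^G$ is a compactification of $M/G$.
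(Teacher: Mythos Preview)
Your proof is correct and follows essentially the same strategy as the paper's: both hinge on the containment $C_0^G(M)\subseteq C_g^G(M)$ together with the standard Gelfand-theoretic criterion for a unital subalgebra of $C_b(M/G)$ to determine a compactification. The paper simply asserts this containment in one line, whereas you supply an explicit argument for it via averaging a mollification against the cut-off function and using properness of the action to control the Haar measure of $\{g:g^{-1}z\in N\}$; this is a genuine contribution of detail, but not a different route.
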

\begin{proof}
	$C_g^G(M)$ contains $C_0^G(M)\cong C_0(M/G)$, which is the closure of the space of $G$-invariant cocompactly supported functions on $M$ under $\norm{\,\cdot\,}_\infty$. Thus $C_g^G(M)$ separates points of $M/G$ from closed subsets and contains the constant functions. Thus $\overline{M}^G$ is a compactification of $M/G$.
\end{proof}
$\overline{M}^G$ is the unique compactification of $M/G$ such that a function $f\colon M/G\rightarrow\mathbb{C}$ extends continuously to $\overline{M}^G$ if and only if $f\in C_g^G(M)$. 

We will show that the $K$-theory of the Higson $G$-corona is highly non-trivial, which provides motivation for the study of $G$-Callias-type index theory. First, we give a construction of $G$-admissible endomorphisms using the $K$-theory of $\overline{M}^G$.
\subsection{Constructing $\Phi$}
\label{subsec:ConstructingPhi}
Suppose first that $M$ is an odd-dimensional complete non-cocompact $G$-Riemannian manifold. Let $E_0\rightarrow M$ be a complex Clifford bundle with an ungraded Dirac operator $D_{E_0}$. Let $P_0$ be a projection in $\textnormal{Mat}_l(C(\partial_h^G(\overline{M})))$ for some $l\geq 0$. Then $P_0$ lifts to an element $P\in\textnormal{Mat}_l(C_g^{\infty,G}(M))$ such that $P=P^*$ and $P^2-P\in\textnormal{Mat}_l(C_0^{\infty,G}(M))$. Form the $\mathbb{Z}_2$-graded Clifford bundle $E\coloneqq E_0\otimes(\mathbb{C}^l\oplus(\mathbb{C}^l)^{op})$, with the associated Dirac operator 
$$D \coloneqq  \left[
\begin{array}{cc}
0 & D_{E_0}\otimes 1\\
D_{E_0}\otimes 1& 0
\end{array}\right],$$
where $D_{E_0}$ denotes $D_{E_0}$ twisted by the trivial connection $d^l$ on $\mathbb{C}^l\rightarrow M$. Define the endomorphism
$$\Phi \coloneqq  i\otimes\left[
\begin{array}{cc}
0 & 1-2P \\
2P-1 & 0
\end{array}\right]\in C^1(M,\End(E)).$$
\begin{proposition}
	$\Phi$ is a $G$-admissible endomorphism.
\end{proposition}
\begin{proof}
	Clearly $\Phi$ is odd-graded and self-adjoint. Let $\nabla^{E_0}$ denote the Clifford connection on $E_0$. Then the twisted connection on $E_0\otimes\mathbb{C}^l$ is $\nabla^{E_0}\otimes 1+1\otimes d^l$. Since $1-2P$ acts only on the factor $\mathbb{C}^l$, it commutes with $\nabla^{E_0}\otimes 1$. Also, $[1\otimes d^l,1\otimes(1-2P)]=d^{l^2}P,$ where $d^{l^2}$ is the trivial connection on $\End(\mathbb{C}^l)$. Thus we have
	$$D\Phi+\Phi D = 2i\otimes\left[
	\begin{array}{cc}
	-c(d^{l^2}P) & 0\\
	0 & c(d^{l^2}P)
	\end{array}\right].$$
	By definition of $C_g^{G,\infty}(M)$, $c(d^{l^2}P)\rightarrow 0$ and $\Phi^2\rightarrow 1$ at infinity in the direction transverse to the $G$-orbits. Thus there exists a cocompact subset $K\subseteq M$ such that on $M\backslash K$, $D\Phi+\Phi D+\Phi^2\geq c$ for some constant $c>0$.
\end{proof}
By Theorem \ref{thm:CalliasInvertibility}, the operator $B\coloneqq D+\Phi$ is $G$-invertible at infinity. Thus $B$ has an index in $K_0(C^*(G))$ by Theorem \ref{thm:G-Index}. Let $F$ be the bounded transform of $B$. By the same reasoning as in \cite{Bunke} section 2, $\ind_G(F)$ depends only on the class of projection $P_0$ in $K_0(C(\partial_h^G(\overline{M}))$.

Note that the endomorphism $\Phi$ arising from the zero element of $K_0(C(\partial_h^G(\overline{M})))$ gives rise to the invertible operator
$$B=\left[
\begin{array}{cc}
0 & D+i \\
D-i & 0
\end{array}\right],$$
for which $\ind_G(BR(0)^{1/2}) = 0\in K_0(C^*(G)).$

Suppose instead that $M$ is an even-dimensional, complete non-cocompact $G$-Riemannian manifold. Let $E_0\rightarrow M$ be a $\mathbb{Z}_2$-graded Clifford bundle with grading $z$. An element $[U_0]\in K_1(C(\partial_h^G(\overline{M})))$ is represented by a unitary $U_0\in\textnormal{Mat}_l(C(\partial_h^G(\overline{M})))$ for some $l\geq 0$. Let $U$ be a lift of $U_0$ to $\textnormal{Mat}_l(C_g^{\infty,G}(M))$. Form the $\mathbb{Z}_2$-graded bundle $E\coloneqq E_0\otimes(\mathbb{C}^l\oplus(\mathbb{C}^{l,op}))$ with Dirac operator $D$ formed from the connection on $E_0$ twisted by $d^l$. Let
$$\Phi \coloneqq  z\otimes\left[
\begin{array}{cc}
0 & U^* \\
U & 0
\end{array}\right]\in C^1(M,\End(E)),$$
and define $B=D+\Phi$.
\begin{proposition}
	$\Phi$ is a $G$-admissible endomorphism.
\end{proposition}
\begin{proof}
	Clearly $\Phi$ is odd-graded and self-adjoint. Similarly to the calculation in the previous proposition, one has
	$$D\Phi+\Phi D = z\left[
	\begin{array}{cc}
	0 & -c(d^{l^2}U^*)\\
	-c(d^{l^2}U) & 0 
	\end{array}\right].$$
	By definition of $C_g^{G,\infty}(M)$, $c(d^{l^2}(U))\rightarrow 0$ and $\Phi^2\rightarrow 1$ at infinity in the direction transverse to the $G$-orbits. It follows that there is some cocompact subset $K\subseteq M$ such that on $M\backslash K$ $D\Phi+\Phi D+\Phi^2\geq c > 0$ for some $c$.
\end{proof}
As in the odd-dimensional case, one can verify that the $\ind_G(F)$, where $F$ is the bounded transform of $B$, depends only on $[U_0]\in K_1(C(\partial_h^G(\overline{M})))$.

For $\dim M \equiv i\,\, (\textnormal{mod } 2)$, $i = 0,1$, the map
$$K_i(C(\partial_h^G(\overline{M})))\rightarrow K_0(C^*(G)),$$
$$[R]\mapsto\ind_G(F_R),$$
is a homomorphism of abelian groups. Here $R$ denotes a projection or unitary matrix representative of $K_0$ or $K_1$, depending on $\dim M$, and $F_R$ denotes the bounded transform of $B=D+\Phi_R$, where $\Phi_R$ is formed using $R$.

\subsection{Interpretation as a $KK$-product}
\label{subsec:KK}
Let $F$ be the bounded transform of a $G$-Callias-type operator $B=D+\Phi$. In this subsection we interpret the cycle $[\mathcal{E}^0,F]$ in terms of a $KK$-pairing, similar to those constructed in \cite{Bunke} and \cite{Kucerovsky}.

Suppose first that $M$ is even-dimensional, with a Dirac operator $D_0$ acting on a $\mathbb{Z}_2$-graded $G$-Clifford bundle $E_0\rightarrow M$. Given a $G$-admissible $\Phi$ of the kind constructed in the previous subsection, one can form the bundle $E$ and the operator $D$. Let $B=D+\Phi$. Using the procedure in section \ref{sec:G-Sobolev}, form the $G$-Sobolev modules $\mathcal{E}^i_{D_0}$ and $\mathcal{E}^i$ associated to $D_0$ and $B$ respectively. Note that $\mathcal{E}^0 = \mathcal{E}^0_{D_0}\otimes\mathbb{C}^2$.

Let $R_{D_0}(0)\coloneqq (D_0^2+1)^{-1}.$ Then by the same kind of analysis as in section \ref{sec:G-Invertible}, one knows that $R_{D_0}(0)^{1/2}$ is a bounded adjointable operator $\mathcal{E}_{D_0}^0\rightarrow\mathcal{E}_{D_0}^1$. Define its bounded transform $F'\coloneqq D_0 R_{D_0}(0)^{1/2}.$
\begin{proposition}
	The pair $(F',\mathcal{E}^0_{D_0})$ defines a cycle $$[D_0]\coloneqq [F',\mathcal{E}^0_{D_0}]\in KK(C_0^G(M),C^*(G)).$$
\end{proposition}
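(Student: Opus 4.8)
The plan is to verify the three defining conditions of a Kasparov $(C_0^G(M), C^*(G))$-cycle for the pair $(F', \mathcal{E}^0_{D_0})$: namely, that for every $h \in C_0^G(M)$ the operators $h(F'^2 - 1)$, $h(F' - F'^*)$, and $[F', h]$ all lie in $\mathcal{K}(\mathcal{E}^0_{D_0})$, where $C_0^G(M)$ acts on $\mathcal{E}^0_{D_0}$ by pointwise multiplication (which is adjointable by the boundedness/adjointability results of section \ref{sec:G-Sobolev}, since $C_0^G(M) \subseteq C_b^G(M)$) and $C^*(G)$ acts on the right as usual. The module $\mathcal{E}^0_{D_0}$ is a countably generated Hilbert $C^*(G)$-module, so the framework of Kasparov modules applies; the $\mathbb{Z}_2$-grading is inherited from the grading $z$ of $E_0$, and $F'$ is odd since $D_0$ is odd and $R_{D_0}(0)^{1/2}$ is even (the latter because functional calculus of the even operator $D_0^2$ produces even operators, exactly as in the Corollary following Proposition \ref{prop:equivalentdefinition}).

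First I would handle self-adjointness. Since $D_0$ is essentially self-adjoint with regular closure on the complete manifold $M$ (this is the cocompact-free special case of Proposition \ref{prop:extensiontononcocompact}, or directly \cite{Kasparov} Theorem 5.8 applied after the doubling argument, with $\Phi = 0$), the operator $D_0^2 + 1$ is positive, regular, self-adjoint and invertible on $\mathcal{E}^0_{D_0}$, so $R_{D_0}(0) = (D_0^2+1)^{-1}$ and its square root are genuine self-adjoint elements of $\mathcal{L}(\mathcal{E}^0_{D_0})$, not merely adjointable. Then $F' = D_0 R_{D_0}(0)^{1/2}$ has adjoint $R_{D_0}(0)^{1/2} D_0 = F'^*$; the discrepancy between $D_0 R_{D_0}(0)^{1/2}$ and $R_{D_0}(0)^{1/2} D_0$ on the common core $\mathcal{E}^1_{D_0}$ is governed by the commutator $[D_0, R_{D_0}(0)^{1/2}]$, which one computes, exactly as in the analysis preceding Theorem \ref{thm:G-Index} (with $f$ replaced by the constant $1$, so $df = 0$), to vanish. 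Hence in fact $F' = F'^*$ exactly, and the condition $h(F' - F'^*) \in \mathcal{K}$ is trivial.

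Next, the Fredholm condition $h(F'^2 - 1) \in \mathcal{K}(\mathcal{E}^0_{D_0})$. By the same computation as in the theorem ``$F^2 \sim 1$ modulo $\mathcal{K}$'' preceding Theorem \ref{thm:G-Index}, one has $F'^2 - 1 = D_0 R_{D_0}(0) D_0 - 1 = -R_{D_0}(0)$ (up to a compact error coming from commuting $D_0$ past the functional calculus, which is negligible here since the analogous correction term again involves $df = 0$). Thus $h(F'^2 - 1) = -h R_{D_0}(0)$, and since $R_{D_0}(0) \in \mathcal{L}(\mathcal{E}^0_{D_0}, \mathcal{E}^2_{D_0})$ factors through $\mathcal{E}^2_{D_0}$, it suffices to know that multiplication by $h \in C_0^G(M)$ is a compact operator $\mathcal{E}^2_{D_0} \to \mathcal{E}^0_{D_0}$. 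This is exactly Lemma \ref{lem:G-Rellich} (the $G$-Rellich lemma) once we reduce to the case where $h$ is cocompactly supported and $G$-invariant: a general $h \in C_0^G(M)$ is a uniform limit of such functions by definition of $C_0^G(M)$, and $\mathcal{K}(\mathcal{E}^2_{D_0}, \mathcal{E}^0_{D_0})$ is norm-closed, so the limit stays compact.

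Finally, the commutator condition $[F', h] \in \mathcal{K}(\mathcal{E}^0_{D_0})$ for $h \in C_0^G(M)$. I expect this to be the main obstacle, though a manageable one: one proceeds as in the Remark following Theorem \ref{thm:CalliasKKClass}. It suffices to treat $h$ smooth, cocompactly supported, and $G$-invariant. Writing $F' = D_0 R_{D_0}(0)^{1/2}$, one expands $[F', h] = [D_0, h] R_{D_0}(0)^{1/2} + D_0 [R_{D_0}(0)^{1/2}, h]$; the first term is $c(dh) R_{D_0}(0)^{1/2}$ with $c(dh)$ a cocompactly supported endomorphism, hence compact by Lemma \ref{lem:G-Rellich} after noting $R_{D_0}(0)^{1/2} \in \mathcal{L}(\mathcal{E}^0_{D_0}, \mathcal{E}^1_{D_0})$, and the second is handled by the integral formula $[R_{D_0}(0)^{1/2}, h] = \frac{2}{\pi}\int_0^\infty R_{D_0}(\lambda)(D_0 c(dh) + c(dh) D_0) R_{D_0}(\lambda)\, d\lambda$, where the integrand is a norm-convergent Bochner integral of compact operators (each factor $R_{D_0}(\lambda) c(dh)$ or $c(dh) R_{D_0}(\lambda)$ being compact by $G$-Rellich, with the $(d + \lambda^2)^{-1}$-type decay from Lemma \ref{resolventlemma} ensuring absolute convergence), so the whole thing lands in $\mathcal{K}(\mathcal{E}^0_{D_0})$. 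Assembling the three verified conditions gives that $(F', \mathcal{E}^0_{D_0})$ is a Kasparov $(C_0^G(M), C^*(G))$-module, whose class we denote $[D_0]$.
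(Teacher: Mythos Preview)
Your proposal is correct and follows essentially the same approach as the paper's proof: reduce to cocompactly supported $G$-invariant functions, use the identity $(F')^2-1=-R_{D_0}(0)$ together with the $G$-Rellich Lemma \ref{lem:G-Rellich} for the Fredholm condition, and handle the commutator $[F',h]$ by the same integral-formula argument indicated in the Remark after Theorem \ref{thm:CalliasKKClass}. Your treatment is in fact more explicit than the paper's---you spell out the self-adjointness of $F'$ (which the paper takes as implicit from functional calculus) and unpack the commutator calculation rather than simply citing the earlier argument---but there is no genuine difference in strategy.
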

\begin{proof}
	Each $a\in C_0^G(M)$ defines a bounded adjointable operator $\mathcal{E}_{D_0}^0\rightarrow\mathcal{E}_{D_0}^0$. Let $C_c^G(M)\subseteq C_b(M)$ be the subring of $G$-invariant, cocompactly supported functions on $M$, with closure $C_0^G(M)$ (see subsection \ref{subsec:GInvertibleKK}). It suffices to show that for all $a\in C_c^G(M)$, we have
	$$a((F')^2-1)\in\mathcal{K}(\mathcal{E}^0_{D_0}),\qquad F'a-aF'\in\mathcal{K}(\mathcal{E}^0_{D_0}).$$
	The second relation follows from the same sort of argument used to prove Theorem \ref{thm:CalliasKKClass}, applied to $F'$ instead of $F$. By functional calculus of the regular operator $D_0$ one has
	$$(F')^2-1=-R_{D_0}(0)\in\mathcal{L}(\mathcal{E}^0,\mathcal{E}^2).$$
	Hence by Theorem \ref{thm:G-Rellich}, $a((F')^2-1))$ is a compact operator $\mathcal{E}^0_{D_0}\rightarrow\mathcal{E}^0_{D_0}$.
\end{proof}
From here, our construction is similar that in \cite{Bunke} 2.3.3 and 2.4.3 for the non-equivariant case, so we will be rather brief. The differences are: instead of the algebra $C_g(M)$ used in \cite{Bunke}, we use $\mathbb{C}$; instead of $C_0(M)$ we use $C_0^G(M)$; and the $\mathbb{C}$-Fredholm cycle $[h,F_2]$ used there is replaced by the $C^*(G)$-Fredholm cycle $[D]=[\mathcal{E}^0_{D_0},F']$. 

Define the $\mathbb{Z}_2$-graded Hilbert $C_0^G(M)$-module
$$L\coloneqq C_0^G(M)\otimes\mathbb{C}^l\otimes\mathbb{C}^2,$$ 
and consider the cycle $[\Phi]\coloneqq [L,\Phi]\in KK(\mathbb{C},C_0^G(M))$. We have:
\begin{proposition}
	\label{evenKK}
	Let $E,E_0,D,D_0,B$ be as before, with $\dim M$ even. Then $$[\mathcal{E}^0,F]=[\Phi]\otimes_{C_0^G(M)}[D_{0}]\in KK(\mathbb{C},C^*(G)).$$
\end{proposition}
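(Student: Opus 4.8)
The plan is to recognise $[\mathcal{E}^0,F]$ as an internal Kasparov product by verifying Kucerovsky's criterion \cite{Kucerovsky}, following the non-equivariant computation of \cite{Bunke} (Sections~2.3.3 and 2.4.3) almost verbatim, but replacing $H^i$ by $\mathcal{E}^i$, $C_g(M)$ by $\mathbb{C}$, $C_0(M)$ by $C_0^G(M)$, and the ordinary Rellich lemma by the $G$-Rellich Lemma~\ref{lem:G-Rellich}.

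First I would identify the internal tensor product. Since $L=C_0^G(M)\otimes\mathbb{C}^l\otimes\mathbb{C}^2$ is free of finite rank over $C_0^G(M)$, the graded Hilbert $C^*(G)$-module $L\otimes_{C_0^G(M)}\mathcal{E}^0_{D_0}$ is canonically $\mathcal{E}^0_{D_0}\otimes\mathbb{C}^l\otimes\mathbb{C}^2$, which is exactly the module $\mathcal{E}^0$ associated to $E=E_0\otimes(\mathbb{C}^l\oplus(\mathbb{C}^l)^{\mathrm{op}})$ (one must check that the two $\mathbb{Z}_2$-gradings match). Under this identification, for $a\in C_c^G(M)$ and $v\in\mathbb{C}^l\otimes\mathbb{C}^2$ the creation operator $T_{a\otimes v}\colon\mathcal{E}^0_{D_0}\to\mathcal{E}^0$ sends $\eta\mapsto(a\eta)\otimes v$, i.e.\ it is multiplication by $a$ followed by tensoring with $v$, and the elements $a\otimes v$ span a dense submodule of $L$. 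Because $\Phi$ is an order-zero bundle endomorphism, it lifts unambiguously from $L$ to the tensor product, and this lift is precisely the endomorphism $\Phi$ on $E$ occurring in $B=D+\Phi$; this removes the connection ambiguity in the ``$\Phi\otimes 1$'' term of Kucerovsky's positivity condition.

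The candidate product operator is the bounded transform $F$ of $B=D+\Phi$ of Definition~\ref{def:definitionofF}: heuristically $B=(1\otimes_\nabla D_0)+\Phi$ is the unbounded Kasparov product of the order-zero ``operator'' $\Phi$ on $L$ with the unbounded cycle $(\mathcal{E}^0_{D_0},D_0)$, the twisting connection being $d^l$, so $F$ is the bounded transform of the product. To make this rigorous I would verify the hypotheses of Kucerovsky's theorem. \textbf{(i)} $(\mathcal{E}^0,F)$ is a Kasparov $(\mathbb{C},C^*(G))$-cycle: this is Theorem~\ref{thm:CalliasKKClass} together with Theorem~\ref{thm:G-Index}. \textbf{(ii)} \emph{Connection condition:} $F$ is an $F'$-connection, i.e.\ for $\xi=a\otimes v$ in the dense submodule above, $F\,T_\xi-(-1)^{\partial\xi}T_\xi\,F'$ and the analogous expression with adjoints lie in $\mathcal{K}(\mathcal{E}^0_{D_0},\mathcal{E}^0)$. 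Writing $F=\tfrac{2}{\pi}\int_0^\infty BR(\lambda)\,d\lambda$ and $F'=\tfrac{2}{\pi}\int_0^\infty D_0R_{D_0}(\lambda)\,d\lambda$ and commuting multiplication by $a$ past these integrals, the discrepancy is controlled by Clifford multiplication $c(da)$ sandwiched between resolvents; since $a\in C_c^G(M)$, $c(da)$ is a cocompactly supported order-zero endomorphism and, combined with the two orders of $G$-Sobolev regularity gained from the resolvents, is $C^*(G)$-compact by Lemma~\ref{lem:G-Rellich}. The $\Phi$-part of $B$ changes $F$ only by a further $F'$-connection, since $\Phi$ is a bounded endomorphism and $d\Phi\in C_0^G$. \textbf{(iii)} \emph{Positivity condition:} $F\Phi+\Phi F\geq 0$ modulo $\mathcal{K}(\mathcal{E}^0)$. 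Here condition~(c) of Definition~\ref{def:Gadmissible} gives $\Phi D+D\Phi+\Phi^2\geq C$ outside a cocompact $K$, so that $\Phi B+B\Phi=D\Phi+\Phi D+2\Phi^2\geq C$ outside $K$ using $\Phi^2\geq 0$; sandwiching with the positive operator $R(0)^{1/2}$, together with the commutator estimates of Section~\ref{sec:G-Invertible} and the fact that operators supported near $K$ are compact by Lemma~\ref{lem:G-Rellich}, yields the claim. Kucerovsky's theorem then gives $[\mathcal{E}^0,F]=[\Phi]\otimes_{C_0^G(M)}[D_0]$.

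The step I expect to be the main obstacle is the connection condition~\textbf{(ii)}: one must carefully follow the creation and annihilation operators $T_\xi,T_\xi^*$ through the resolvent integral defining $F$ on the internal tensor product and confirm that every error term is genuinely $C^*(G)$-compact --- this is exactly where the $G$-equivariant Rellich lemma does work that has no non-equivariant counterpart, and it is the only place where the argument departs in substance from \cite{Bunke}. A secondary point needing care is the compatibility of the two $\mathbb{Z}_2$-gradings, and of the sign conventions in the ``$\mathrm{op}$'' summand, under the module identification of the first step.
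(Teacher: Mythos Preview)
Your proposal is correct and follows precisely the route the paper intends: the paper gives no self-contained proof of Proposition~\ref{evenKK} but explicitly defers to \cite{Bunke} Sections~2.3.3 and 2.4.3 (and \cite{Kucerovsky}) with exactly the substitutions you list ($C_g(M)\rightsquigarrow\mathbb{C}$, $C_0(M)\rightsquigarrow C_0^G(M)$, $H^i\rightsquigarrow\mathcal{E}^i$, ordinary Rellich $\rightsquigarrow$ Lemma~\ref{lem:G-Rellich}). Your identification of the module, the connection condition via resolvent commutators with $c(da)$ for $a\in C_c^G(M)$, and the positivity condition from $G$-admissibility are the expected ingredients, and your flagging of~(ii) as the place where the $G$-Rellich lemma genuinely replaces its classical counterpart is accurate.
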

Next, we consider odd-dimensional $M$, with the initial Dirac operator $D_{0}$ being ungraded. Let $C^{1,0}$ be the Clifford algebra generated by a single element $X$ satisfying the relation $X^2 = -1$. Let $z$ be the grading of $C^{1,0}$, and form the operator 
$$D=D_{0}\otimes zX,$$ 
which defines a class $[D]\in KK(C_0^G(M)\otimes C^{1,0},C^*(G))$. Consider 
$$L\coloneqq C_0^G(M)\otimes\mathbb{C}^l\otimes C^{1,0}$$
as a $\mathbb{Z}_2$-graded Hilbert $C_0^G(M)\otimes C^{1,0}$-module. Write
$$\Phi=i(1-2P)\otimes X,$$ 
which defines a bounded adjointable operator on $L$. One verifies as in \cite{Bunke} 2.3.3 that the pair $(L,\Phi)$ defines a Kasparov module 
$$[\Phi]\coloneqq [L,\Phi]\in KK(\mathbb{C},C_0^G(M)\otimes C^{1,0}).$$

\noindent Let $\tau_{C^{1,0}}$ be the isomorphism
$$KK(\mathbb{C},C^*(G))\xrightarrow{\sim}KK(\mathbb{C}\otimes C^{1,0},C^*(G)\otimes C^{1,0})$$
defined in \cite{Blackadar} 17.8.5. Then we have:
\begin{proposition}
	\label{oddKK}
	Let $E,D,B$ be as before, with $\dim M$ odd. Then $$[\mathcal{E}^0,F]=\tau_{C^{1,0}}^{-1}([\Phi]\otimes_{C_0(M)}[D])\in KK(\mathbb{C},C^*(G)).$$
\end{proposition}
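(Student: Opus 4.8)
The plan is to prove Proposition \ref{oddKK} in parallel with the even case (Proposition \ref{evenKK}), reducing everything to Kucerovsky's criterion \cite{Kucerovsky} for recognising a (bounded-transformed) unbounded cycle as a Kasparov product, with the analytic input supplied by the resolvent estimates of Section \ref{sec:G-Invertible} and the $G$-Rellich Lemma \ref{lem:G-Rellich}. The only genuine novelty over the even case is the Clifford algebra $C^{1,0}$, used to ``even out'' the odd dimension and tracked by the isomorphism $\tau_{C^{1,0}}$.

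First I would fix representatives of the three classes: $[\Phi]$ is given by the finitely generated projective $\mathbb{Z}_2$-graded Hilbert $C_0^G(M)\otimes C^{1,0}$-module $L = C_0^G(M)\otimes\mathbb{C}^l\otimes C^{1,0}$ with the odd self-adjoint bounded $\Phi = i(1-2P)\otimes X$; and $[D]$ by $\mathcal{E}^0_{D_0}\otimes C^{1,0}$ with the odd regular self-adjoint operator $D_0\otimes zX$, whose bounded transform is $F'\otimes zX$ with $F' = D_0 R_{D_0}(0)^{1/2}$ (note $(zX)^2=1$, so $(D_0\otimes zX)^2 = D_0^2\otimes 1$; regularity and the module structure come from Sections \ref{sec:G-Sobolev}--\ref{sec:G-Invertible} applied to the ungraded $D_0$). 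I would then identify the internal tensor product $L\otimes_{C_0^G(M)\otimes C^{1,0}}(\mathcal{E}^0_{D_0}\otimes C^{1,0})$ with the $G$-Sobolev module $\mathcal{E}^0$ built from $B=D+\Phi$, using $\mathcal{E}^0 \cong \mathcal{E}^0_{D_0}\otimes\mathbb{C}^l\otimes C^{1,0}$ as graded Hilbert $C^*(G)$-modules, so that $(\mathcal{E}^0, F)$ becomes an admissible candidate for the product.

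Next I would verify Kucerovsky's two conditions. For the \emph{connection condition}, I would show that for $\xi$ in a dense subset of $L$ the creation operators $T_\xi$ satisfy $T_\xi(F'\otimes zX) - (-1)^{\partial\xi}F\,T_\xi \in \mathcal{K}(\mathcal{E}^0_{D_0}\otimes C^{1,0}, \mathcal{E}^0)$, together with the adjoint statement; since $\Phi$ is a bounded order-$0$ endomorphism, the discrepancy between $F = BR(0)^{1/2}$ and the $D$-part involves only $\Phi$ and the bounded order-$0$ commutators $[\Phi, D]$, and composing with the extra Sobolev smoothing produced by the resolvents and invoking Lemma \ref{lem:G-Rellich} together with the remark after Theorem \ref{thm:CalliasKKClass} (that $[F,h]\in\mathcal{K}(\mathcal{E}^0)$ for $h\in C_g^G(M)$) makes these differences compact. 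For the \emph{positivity condition}, I would expand the anticommutator of $F$ with a lift of $F'\otimes zX$ in terms of $B R(0)^{1/2}$, $D_0 R_{D_0}(0)^{1/2}$ and the resolvent bounds of Lemmas \ref{lem:d} and \ref{resolventlemma}; up to manifestly nonnegative terms and compact operators this reduces to positivity of $D\Phi+\Phi D+\Phi^2$ outside a cocompact set, which is exactly the $G$-admissibility condition (c) of Definition \ref{def:Gadmissible}. Granting these, Kucerovsky's theorem identifies $(\mathcal{E}^0, F)$ with the product at the appropriate level, and transporting through $\tau_{C^{1,0}}^{-1}$ (\cite{Blackadar} 17.8.5) yields the stated equality in $KK(\mathbb{C}, C^*(G))$.

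The main obstacle I expect is the positivity condition: unlike the even case one cannot pair against a dense subalgebra of the ``small'' algebra other than $\mathbb{C}$, so the anticommutator estimate must hold essentially on the nose modulo $\mathcal{K}(\mathcal{E}^0)$, and the cross terms between the two square-root resolvents $R(0)^{1/2}$ and $R_{D_0}(0)^{1/2}$ on the Hilbert $C^*(G)$-module $\mathcal{E}^0$ — where ``compact'' means compact over $C^*(G)$, so that one really needs the equivariant Rellich Lemma \ref{lem:G-Rellich} rather than the classical one — must be controlled with some care. A secondary, purely bookkeeping obstacle is keeping the $C^{1,0}$-gradings and the operator $zX$ consistent so that the final application of $\tau_{C^{1,0}}^{-1}$ is the correct normalisation; here I would lean on the even-dimensional computation of Proposition \ref{evenKK} and on \cite{Bunke} 2.4.3.
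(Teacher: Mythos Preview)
Your overall plan---identify the internal tensor product with $\mathcal{E}^0$ and then verify the connection and positivity conditions for the Kasparov product---is exactly what the paper implicitly does by deferring to \cite{Bunke} 2.3.3 and 2.4.3, and your treatment of the connection condition is on the right track. However, you have the positivity condition wrong. In both Kucerovsky's unbounded criterion and the bounded Connes--Skandalis form, positivity pairs the product candidate with the \emph{first} factor, not the second: one needs $a[F_1\otimes 1, F]a^*\geq 0$ modulo $\mathcal{K}(\mathcal{E}^0)$ for $a\in A$ (here $A=\mathbb{C}$ and $F_1=\Phi$), or at the unbounded level $\langle(\Phi\otimes 1)x, Bx\rangle + \langle Bx, (\Phi\otimes 1)x\rangle \geq -K\langle x,x\rangle$. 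Since $\Phi$ is a bounded self-adjoint endomorphism, the latter is just $\langle x, (\Phi B + B\Phi)x\rangle$ with $\Phi B + B\Phi = D\Phi + \Phi D + 2\Phi^2$, which is bounded below simply because $D\Phi + \Phi D$ is a bounded order-zero endomorphism (Definition \ref{def:Gadmissible}(a)) and $\Phi^2\geq 0$. So positivity is essentially automatic, not the ``main obstacle''; you do not need condition (c) of $G$-admissibility for it (that condition is what makes $(\mathcal{E}^0,F)$ a cycle in the first place, via $G$-invertibility at infinity), and the ``anticommutator of $F$ with a lift of $F'\otimes zX$'' you propose to compute does not enter any form of the product criterion---computing it would give you $2D^2$ plus lower-order terms, not $D\Phi+\Phi D+\Phi^2$.

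The genuine analytic work lies entirely in the connection condition, and there your sketch is correct: establishing $F T_\xi - (-1)^{\partial\xi}T_\xi(F'\otimes zX)\in\mathcal{K}$ for $\xi$ dense in $L$ is precisely where the resolvent estimates of Section \ref{sec:G-Invertible} and the $G$-Rellich Lemma \ref{lem:G-Rellich} are used, together with the fact that $\Phi$ is order zero and commutes with the $C_0^G(M)$-action.
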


\subsection{The Higson $G$-corona}
\label{subsec:Gcorona}
We now turn to the $K$-theory of the Higson $G$-corona $\overline{M}^G$. We are motivated by the work of Keesling \cite{Keesling} on $K$-theory of the Higson compactification of metric spaces. Recall that for $X$ a metric space and $\phi$ a function on $X$, the map $V_r(\phi):M\rightarrow\mathbb{R}^+$ defined by
$$V_r(\phi)(x)=\sup\{|\phi(y)-\phi(x)|:y\in B_r(x)\}$$
is called the variation of $\phi$ at scale $r$. Define $C_h(X)$ to be the $C^*$-algebra of all bounded continuous functions $\phi$ on $X$ such that, for every $r$, $V_r(\phi)\rightarrow 0$ at infinity. The maximal ideal space $\overline{X}^d$ of $C_h(X)$ is the Higson compactification of $X$. In \cite{Keesling}, it is proved that:
\begin{theorem}
	Let $(X,d)$ be a non-compact connected metric space with proper metric $d$. Then $\check{H}^1(\overline{X}^d)$ contains a subgroup isomorphic to $(\mathbb{R},+)$, where $\check{H}^1$ denotes the first \v{C}ech cohomology group.
\end{theorem}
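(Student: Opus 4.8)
The plan is to write down an explicit injective homomorphism of abelian groups $\iota\colon(\mathbb{R},+)\to\check H^1(\overline X^d)$ and take its image. Throughout I will use the classical Bruschlinsky identification $\check H^1(Y;\mathbb{Z})\cong[Y,S^1]$, valid for any compact Hausdorff $Y$, under which addition in cohomology corresponds to pointwise multiplication of circle-valued maps; equivalently, writing $C(\overline X^d)=C_h(X)$ for the $C^*$-algebra of bounded Higson functions on $X$ (this is the defining property of the Higson compactification), one has $\check H^1(\overline X^d)\cong U(C_h(X))/\exp\!\big(i\,C_h(X;\mathbb{R})\big)$.

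First I would fix a basepoint $x_0\in X$ and put $\rho\coloneqq d(x_0,\cdot)\colon X\to[0,\infty)$, a $1$-Lipschitz continuous function. Since $d$ is a proper metric, the sublevel sets $\{\rho\le R\}$ are compact; as $X$ is non-compact this forces $\rho$ to be unbounded, and it identifies ``$x\to\infty$ in $X$'' with ``$\rho(x)\to\infty$''. For $t\in\mathbb{R}$ define $u_t\in C(X,S^1)$ by $u_t(x)\coloneqq\exp\!\big(it\log(1+\rho(x))\big)$. The essential point is that each $u_t$ is a Higson function: for $r>0$ and points with $d(x,y)\le r$, the inequality $|\log a-\log b|\le|a-b|/\min(a,b)$ together with the Lipschitz bound on $\rho$ gives
$$|u_t(x)-u_t(y)|\le|t|\,\big|\log(1+\rho(x))-\log(1+\rho(y))\big|\le\frac{|t|\,r}{1+\rho(x)-r},$$
which tends to $0$ as $\rho(x)\to\infty$, i.e.\ as $x$ leaves compact subsets of $X$. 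Hence $u_t$ extends continuously to $\overline X^d$ and defines a class $[u_t]\in\check H^1(\overline X^d)$, and $\iota\colon t\mapsto[u_t]$ is a homomorphism since $u_tu_s=u_{t+s}$ pointwise.

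It remains to prove $\iota$ is injective, which is the only step requiring care. Suppose $[u_t]=0$ for some $t\ne0$. Then $u_t$ is null-homotopic as a map $\overline X^d\to S^1$ and therefore lifts through the covering $\mathbb{R}\to S^1$, $s\mapsto e^{is}$ (equivalently: $[u_t]=0$ in $U(C(\overline X^d))/\exp(iC(\overline X^d,\mathbb{R}))$ directly gives this), so there is $h\in C(\overline X^d,\mathbb{R})$ with $u_t=e^{ih}$. Restricting to the dense subspace $X$ and using the definition of $u_t$, the function $h|_X-t\log(1+\rho)\colon X\to\mathbb{R}$ is continuous with values in $2\pi\mathbb{Z}$; since $X$ is connected it equals a constant $2\pi n$, so $h|_X=2\pi n+t\log(1+\rho)$. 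But $h$ is continuous on the compact space $\overline X^d$, hence bounded, while the right-hand side is unbounded on $X$ because $\rho$ is unbounded and $t\ne0$ --- a contradiction. Thus $\ker\iota=\{0\}$ and $\iota(\mathbb{R})\cong(\mathbb{R},+)$ is a subgroup of $\check H^1(\overline X^d)$.

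Two remarks on the structure of the argument. Connectedness of $X$ is used exactly once, to pin down the $2\pi\mathbb{Z}$-ambiguity of the logarithm by forcing a locally constant integer-valued function to be globally constant; in the corona variant of the theorem one additionally quotients by $C_0(X)$ and $X\setminus K$ may have many components, so there one must instead argue on a single unbounded component of the complement of a large compact set. Properness of $d$ enters only to identify escape to infinity with $\rho\to\infty$ and to force $\rho$ unbounded. I expect the genuinely load-bearing ingredient --- and the natural place for things to go wrong --- to be the Higson estimate above: the factor $\log(1+\rho)$ is engineered so that its increment over $r$-balls decays like $r/\rho$ while the function itself still diverges (any $\rho^{\alpha}$ with $0<\alpha<1$ would do equally well), and the remainder is bookkeeping around the identification $\check H^1(\overline X^d)\cong[\overline X^d,S^1]$ and the homotopy-lifting property of $\mathbb{R}\to S^1$.
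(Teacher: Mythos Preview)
Your proof is correct. The paper does not itself prove this particular statement --- it is cited from Keesling without proof --- but it does prove a $G$-equivariant analogue later in the same subsection, and your argument coincides with that proof in substance: the same logarithmically damped distance function $t\log(1+\rho)$ supplies the Higson condition, and your injectivity step is exactly the paper's exactness argument (lift a null-homotopic circle-valued map through $\mathbb{R}\to S^1$, observe the lift is bounded on the compactification, use connectedness to pin down the deck transformation, and contradict unboundedness of $t\log(1+\rho)$ for $t\ne 0$).

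The only packaging difference is that the paper frames this as an exact sequence $0\to C_g^{G,b}(M,\mathbb{R})\hookrightarrow C_g^{G,u}(M,\mathbb{R})\xrightarrow{a}\check H^1(\overline M^G)$ and then argues that the quotient $C_g^{G,u}/C_g^{G,b}$ is isomorphic to $(\mathbb{R},+)$ by a cardinality count (rank exactly $2^{\aleph_0}$ via separability) together with the structure theorem for divisible torsion-free abelian groups, whereas you bypass this and directly exhibit the injective one-parameter subgroup $t\mapsto[u_t]$. Your route is more economical and avoids the appeal to structure theory; the paper's route has the mild advantage of identifying the entire image of $a$ rather than just a one-parameter piece of it.
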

\begin{corollary}
	Let $(X,d)$ be as in the above proposition. Then $K^1(\overline{X}^d)$ is uncountable.
\end{corollary}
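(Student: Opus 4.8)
The plan is to deduce the corollary from the cited theorem of Keesling by a standard Chern-character / Atiyah-Hirzebruch argument, exploiting the fact that $\overline{X}^d$ is a compact Hausdorff space and that $\check{H}^1$ with $\mathbb{R}$-coefficients detects uncountability. First I would recall that for a compact Hausdorff space $Y$ there is a natural isomorphism $K^1(Y)\otimes\mathbb{Q}\cong\bigoplus_{n\ \mathrm{odd}}\check{H}^n(Y;\mathbb{Q})$, coming from the Chern character, and more simply that the odd Chern character gives a homomorphism $\mathrm{ch}\colon K^1(Y)\to\prod_{n\ \mathrm{odd}}\check{H}^n(Y;\mathbb{Q})$ whose first component $K^1(Y)\to\check{H}^1(Y;\mathbb{Q})$ is surjective after tensoring with $\mathbb{Q}$, since in degree $1$ there are no lower-degree corrections. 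Thus $\check{H}^1(Y;\mathbb{Q})$ is a quotient of $K^1(Y)\otimes\mathbb{Q}$.

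Next I would observe that by the universal coefficient theorem for \v{C}ech cohomology, $\check{H}^1(Y;\mathbb{Q})\supseteq\check{H}^1(Y;\mathbb{Z})\otimes\mathbb{Q}$, and in fact one can work with $\check{H}^1(Y;\mathbb{R})=\check{H}^1(Y;\mathbb{Z})\otimes\mathbb{R}$ (since $\check{H}^2$ contributes only a $\mathrm{Tor}$ term that vanishes with $\mathbb{R}$-coefficients, or one simply uses continuity of \v{C}ech cohomology and the corresponding statement for polyhedra). Keesling's theorem tells us $\check{H}^1(\overline{X}^d;\mathbb{Z})$ — or at any rate $\check{H}^1(\overline{X}^d)$ with real coefficients — contains a copy of $(\mathbb{R},+)$, hence is uncountable as a set. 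Combining with the previous paragraph, $K^1(\overline{X}^d)\otimes\mathbb{Q}$ surjects onto an uncountable group, so $K^1(\overline{X}^d)\otimes\mathbb{Q}$ is uncountable; since tensoring a countable abelian group with $\mathbb{Q}$ yields a countable group, $K^1(\overline{X}^d)$ itself must be uncountable.

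The one point requiring a little care — and the step I expect to be the main obstacle — is matching up the coefficient conventions in Keesling's statement with what the Chern character sees: one must make sure that the subgroup isomorphic to $(\mathbb{R},+)$ sitting inside $\check{H}^1(\overline{X}^d)$ survives into $\check{H}^1(\overline{X}^d;\mathbb{Q})$ (equivalently, is not killed by torsion phenomena) so that it is genuinely detected by $\mathrm{ch}_1\otimes\mathbb{Q}$. This is fine because $(\mathbb{R},+)$ is a $\mathbb{Q}$-vector space and hence torsion-free and divisible, so its image in $\check{H}^1(\overline{X}^d)\otimes\mathbb{Q}$ is again uncountable; alternatively, invoking continuity of \v{C}ech theory one reduces to compact polyhedra, where $K^1\to\check{H}^1(-;\mathbb{Q})$ is visibly rationally surjective, and passes to the inverse limit. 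With this in place the corollary follows, and the same argument will be reused in the proof of Theorem \ref{thm:infiniteKofCorona} once the Higson $G$-corona is identified (up to the relevant $K$-theoretic information) with the metric-space Higson corona of $M/G$.
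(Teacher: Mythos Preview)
Your argument is correct and follows essentially the same route as the paper: invoke the rational Chern character to pass from $K^1(\overline{X}^d)\otimes\mathbb{Q}$ to odd \v{C}ech cohomology with rational coefficients, use Keesling's theorem to see that $\check{H}^1(\overline{X}^d)\otimes\mathbb{Q}$ is uncountable (since $(\mathbb{R},+)$ is already a $\mathbb{Q}$-vector space), and conclude. Your extra care about coefficient conventions and about why the copy of $\mathbb{R}$ survives tensoring with $\mathbb{Q}$ is entirely appropriate---the paper's proof is terser and leaves exactly these points implicit.
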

\begin{proof}
	The Chern character gives an isomorphism
	$$\ch\colon K^1(\overline{X}^d)\otimes\mathbb{Q}\xrightarrow{\sim}\check{H}^{\textnormal{odd}}(\overline{X}^d,\mathbb{Q})\cong\check{H}^{\textnormal{odd}}(\overline{X}^d)\otimes\mathbb{Q}.$$
	By the previous theorem, $\check{H}^1(X)^d\otimes\mathbb{Q}$, and hence $K^1(\overline{X}^d)\otimes\mathbb{Q}$, is uncountable.
\end{proof}
Let us return to the setting of the smooth manifold $M$ and the Higson $G$-compactification $\overline{M}^G$. We can characterise $\overline{M}^G$ using functions with values in a compact submanifold $Y\subseteq\mathbb{R}^N$ for some $N$. Recall that $f\in C_g^{G,\infty}(M)$ if $f$ is bounded, smooth and $G$-invariant, and $\norm{df}_{T^*M}\in C_0^G(M)$.
\begin{definition}
	Let $M$ be a non-cocompact $G$-Riemannian manifold and $Y$ a compact submanifold of $\mathbb{R}^N$ for some $N\geq 0$. Let $\pi_i\colon\mathbb{R}^N\rightarrow\mathbb{R}$ be the projection map onto the $i$-th coordinate. Suppose $f\colon M\rightarrow Y$ is a $G$-invariant function. Then we say that $f\in C_g^{G}(M,Y)$ if $\pi_i\circ f\in C_g^G(M)$ for each $1\leq i\leq N$.
\end{definition}
The proofs of the next two propositions are adapted from the work of Keesling \cite{Keesling}.
\begin{proposition}
	Let $M$ be a complete non-cocompact $G$-Riemannian manifold. Let $Y\subseteq\mathbb{R}^N$ be a compact submanifold for some $N\geq 0$ and $f\colon M/G\rightarrow Y$ continuous. Then $f$ has a continuous extension to the Higson $G$-compactification $\overline{M}^G$ if and only if $f\in C_g^G(M,Y)$. Further, $\overline{M}^G$ is the unique such compactification of $M/G$.
\end{proposition}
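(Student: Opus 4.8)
The strategy is to reduce to the scalar-valued characterisation of $\overline{M}^G$ recorded just above: $\overline{M}^G$ is the unique compactification of $M/G$ for which a function $M/G\to\C$ extends continuously exactly when it lies in $C_g^G(M)$; in particular $C(\overline{M}^G)|_{M/G}=C_g^G(M)$, since this restriction algebra is by definition the set of functions that extend. The bridge between the scalar and the $Y$-valued statements is to pass to the $N$ coordinate functions $\pi_i\circ f$.

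For the equivalence: if $f$ extends to a continuous $\tilde f\colon\overline{M}^G\to Y$, then each $\pi_i\circ\tilde f\colon\overline{M}^G\to\R$ is continuous, so $\pi_i\circ f=(\pi_i\circ\tilde f)|_{M/G}\in C(\overline{M}^G)|_{M/G}=C_g^G(M)$ and hence $f\in C_g^G(M,Y)$. Conversely, if $\pi_i\circ f\in C_g^G(M)$ for every $i$, extend each to $g_i\in C(\overline{M}^G)$ and set $\tilde f\coloneqq(g_1,\dots,g_N)\colon\overline{M}^G\to\R^N$, a continuous extension of $f$. Since $M/G$ is dense in $\overline{M}^G$ and $Y$, being compact, is closed in $\R^N$, we get $\tilde f(\overline{M}^G)\subseteq\overline{\tilde f(M/G)}=\overline{f(M/G)}\subseteq Y$, so $\tilde f$ takes values in $Y$ and is the required extension.

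For the uniqueness clause --- the step I expect to be the real obstacle --- let $Z$ be a compactification of $M/G$ such that, for every compact submanifold $Y\subseteq\R^N$, a continuous $f\colon M/G\to Y$ extends to $Z$ iff $f\in C_g^G(M,Y)$. Having just shown that $\overline{M}^G$ enjoys the same property, it suffices to see the property determines the compactification; and a compactification $W$ of $M/G$ is determined by the subalgebra $C(W)|_{M/G}\subseteq C_b(M/G)$. Every element of such an algebra is a bounded continuous $\C$-valued function, hence has image in some closed disk $D\subseteq\R^2$, and such a $\phi$ extends to $W$ iff the map $\phi\colon M/G\to D$ extends to $W$ --- any extension automatically lands in $\overline{\phi(M/G)}\subseteq D$ by density of $M/G$ and compactness of $D$. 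Applying the hypothesis with $Y=D$ (regarded as a compact $2$-manifold with boundary in $\R^2$) now yields: $\phi\colon M/G\to D$ extends to $Z$ iff $\phi\in C_g^G(M,D)$ iff $\phi\colon M/G\to D$ extends to $\overline{M}^G$. Hence $C(Z)|_{M/G}=C(\overline{M}^G)|_{M/G}$, and $Z\cong\overline{M}^G$ over $M/G$.

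Should the convention for ``submanifold'' exclude boundary, one argues identically after composing with a stereographic embedding $\sigma\colon\C\hookrightarrow S^2\subseteq\R^3$: the image of $\phi$ lies in the proper compact subset $\sigma(D)\subsetneq S^2$, so any extension of $\sigma\circ\phi$ to $W$ still has image in $\sigma(D)$ and may be post-composed with the smooth inverse of $\sigma|_{\sigma(D)}$; here one uses the elementary fact, provable by Stone--Weierstrass since $C_g^G(M)$ is a norm-closed subalgebra containing the constants, that $C_g^G(M)$ is stable under post-composition with continuous functions on compact subsets of Euclidean space. This establishes both assertions.
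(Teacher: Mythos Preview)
Your proof is correct and follows essentially the same approach as the paper: reduce the $Y$-valued extension problem to the scalar case via coordinate projections, and for uniqueness pass back from the $Y$-valued characterisation to the scalar one to pin down the function algebra. You are in fact more careful than the paper on two points: you explain why the coordinate-wise extension lands in $Y$ (density of $M/G$ and closedness of $Y$), and you spell out how the $Y$-valued hypothesis on a competing compactification $Z$ recovers the scalar algebra $C(Z)|_{M/G}=C_g^G(M)$ via disks or spheres, whereas the paper asserts this step without argument.
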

\begin{proof}
	Let $f\in C_g^{G}(M,Y)$. Without loss of generality, since $Y$ is compact, we may take $Y$ to be a submanifold of $[0,1]^{N}$. For each $j$, let $\pi_j\colon[0,1]^{N}\rightarrow [0,1]$ be the projection onto the $j$-th coordinate. Then $\pi_j\circ f\in C_g^{G}(M)$ and hence can be extended to a continuous function $\overline{\pi_j\circ f}$ on $\overline{M}^G$. The function $\overline{f}\colon\overline{M}^G\rightarrow [0,1]^{N},$
	$x\mapsto\left(\overline{\pi_1\circ f}(x),\ldots,\overline{\pi_{N}\circ f}(x)\right)$ is a continuous extension of $f$ to $\overline{M}^G$, taking values in $Y\subseteq [0,1]^{N}$. On the other hand, suppose $f\colon M\rightarrow Y$ is a continuous map not in $C_g^{G}(M,Y)$. Then there exists some $j$ for which $\pi_j\circ f\notin C_g^{G}(M)$. Thus $\pi_j\circ f$ does not extend continuously to $\overline{M}^G$. It follows that $f$ does not extend continuously to $\overline{M}^G$, for otherwise $\pi_j\circ f$ would also.
	
	For uniqueness, suppose $\mathcal{C}$ is a compactification of $M/G$ with the above properties. Then the set of continuous functions $M/G\rightarrow \mathbb{C}$ that extend to $\mathcal{C}$ is the closed subring $C_g^G(M)\subseteq C_b^G(M).$ Taking maximal ideals of $C_g^G(M)$ with the convex-hull topology recovers $\overline{M}^G$.
\end{proof}
We apply this proposition to $Y=S^1\subseteq\mathbb{R}^2$. The first \v{C}ech cohomology $\check{H}^1(\overline{M}^G)$ can be identified with the group $[\overline{M}^G,S^1]$ of homotopy classes of maps $\overline{M}^G\rightarrow S^1$, with a certain group operation derived from pointwise multiplication of functions. Let $e\colon\mathbb{R}\rightarrow S^1$ be the covering map $x\mapsto e^{2\pi i x}\in S^1\subseteq\mathbb{C}$. Let the algebra $C_g^{G,u}(M)$ be defined by the same conditions as $C_g^G(M)$ except that the functions need not be bounded. 

For clarity, let us denote $C_g^{G,b}(M)\coloneqq C_g^G(M)$, and let $C_g^{G,b}(M,\mathbb{R})$ and $C_g^{G,u}(M,\mathbb{R})$ be the real-valued functions in $C_g^{G,b}(M)$ and $C_g^{G,u}(M)$.

If $f\in C_g^{G,u}(M,\mathbb{R})$, it is easy to see that $e\circ f\in C_g^G(M,S^1)$, and so by the above proposition, $f$ has an extension $\overline{e\circ f}\colon\overline{M}^G\rightarrow S^1$. Define the map 	$a\colon C_g^{G,u}(M)\rightarrow\check{H}^1(\overline{M}^G)$ by
$$f\mapsto [\overline{e\circ f}]\in [\overline{M}^G,S^1]\cong\check{H}^1(\overline{M}^G).$$
\begin{proposition}
	There is an exact sequence of abelian groups 
	$$0\rightarrow C_g^{G,b}(M,\mathbb{R})\hookrightarrow C_g^{G,u}(M,\mathbb{R})\xrightarrow{a}\check{H}^1(\overline{M}^G).$$
\end{proposition}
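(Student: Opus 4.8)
The plan is to verify exactness at each of the two relevant spots: injectivity of the inclusion $C_g^{G,b}(M,\mathbb{R})\hookrightarrow C_g^{G,u}(M,\mathbb{R})$ (which is trivial, as it is literally a subset inclusion of function spaces), and exactness at $C_g^{G,u}(M,\mathbb{R})$, i.e.\ that $\ker(a) = C_g^{G,b}(M,\mathbb{R})$. The map $a$ is a group homomorphism because $e\colon\mathbb{R}\to S^1$ is a homomorphism and the group operation on $\check H^1(\overline M^G)\cong[\overline M^G,S^1]$ is induced by pointwise multiplication in $S^1$; I would note this at the outset. The content is therefore the computation of the kernel.

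First I would show $C_g^{G,b}(M,\mathbb{R})\subseteq\ker(a)$. If $f$ is bounded, then $f$ itself extends continuously to $\overline{f}\colon\overline M^G\to\mathbb{R}$ by the preceding proposition (applied with $Y$ a closed interval containing the image of $f$), and $\overline{e\circ f} = e\circ\overline f$ by continuity and density of $M/G$ in $\overline M^G$. Since $\mathbb{R}$ is contractible, $e\circ\overline f$ is null-homotopic: a homotopy is given by $H_t = e\circ(t\overline f)$, $t\in[0,1]$, which is continuous $\overline M^G\times[0,1]\to S^1$. Hence $[\overline{e\circ f}] = 0$ and $f\in\ker(a)$.

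The harder inclusion is $\ker(a)\subseteq C_g^{G,b}(M,\mathbb{R})$. Suppose $f\in C_g^{G,u}(M,\mathbb{R})$ with $a(f)=0$, i.e.\ $\overline{e\circ f}\colon\overline M^G\to S^1$ is null-homotopic. Since $\overline M^G$ is compact, a null-homotopic map to $S^1$ lifts through the covering $e\colon\mathbb{R}\to S^1$ — concretely, a null-homotopy yields a lift by the homotopy lifting property, starting from a constant lift of the constant map. So there is a continuous $\widetilde{g}\colon\overline M^G\to\mathbb{R}$ with $e\circ\widetilde g = \overline{e\circ f}$. Restricting to $M/G$ and pulling back to $M$, we get a continuous $G$-invariant $g\colon M\to\mathbb{R}$ with $e\circ g = e\circ f$ on $M$, so $g - f$ takes values in $\mathbb{Z}$; since $M$ is connected (or, on each component), $g - f$ is a constant integer, whence $f = g - n$ for some $n\in\mathbb{Z}$. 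But $g$ is the restriction of a continuous function on the compact space $\overline M^G$, hence bounded; therefore $f$ is bounded. It remains to check $f\in C_g^{G,b}(M)$, i.e.\ that $\norm{df}_{T^*M}\in C_0^G(M)$ — but this is part of the hypothesis $f\in C_g^{G,u}(M,\mathbb{R})$, which imposes exactly the Higson-type derivative condition and differs from $C_g^{G,b}$ only by the boundedness requirement we have just established. Hence $f\in C_g^{G,b}(M,\mathbb{R})$.

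The main obstacle is the lifting step for $\ker(a)\subseteq C_g^{G,b}$: one must be careful that the lift is through $e$ as a map on the \emph{compactification} $\overline M^G$ and then pulled back, rather than lifting only on $M$ (where one could always lift a map to $S^1$ locally but without control at infinity). The compactness of $\overline M^G$ is what makes the lifted function bounded, which is the whole point. A minor subtlety is the identification $\check H^1(\overline M^G)\cong[\overline M^G,S^1]$ together with the fact that $\overline{e\circ f} = e\circ(\text{lift})$ forces the lift to agree with $f$ modulo $\mathbb{Z}$ on the dense subset $M/G$; by continuity it then agrees on all of $\overline M^G$, but we only need the statement on $M$. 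Everything else is formal.
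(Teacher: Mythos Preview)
Your proof is correct and follows essentially the same approach as the paper: for $C_g^{G,b}\subseteq\ker(a)$ you use that a bounded $f$ extends to $\overline{M}^G$ and that this extension is a lift of $\overline{e\circ f}$, hence the latter is null-homotopic; for $\ker(a)\subseteq C_g^{G,b}$ you lift the null-homotopic $\overline{e\circ f}$ to a continuous real-valued function on the compact space $\overline{M}^G$, observe that on $M$ this lift differs from $f$ by a locally constant integer (a deck transformation, in the paper's language), and conclude that $f$ is bounded. Your write-up is slightly more explicit in places (the homotopy $e\circ(t\overline f)$, the remark that only boundedness distinguishes $C_g^{G,b}$ from $C_g^{G,u}$), but the argument is the same.
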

\begin{proof}
	Clearly the sequence is exact at $C_g^{G,b}(M,\mathbb{R})$. To show that $C_g^{G,b}(M,\mathbb{R})\subseteq\ker(a)$, let $f\in C_g^{G,b}(M,\mathbb{R})$. Then $f$ has an extension $\overline{f}\colon\overline{M}^G\rightarrow\mathbb{R}$. Since $\overline{f}$ is a lift for $\overline{e\circ f}$, the latter map is null-homotopic. Next, to show that $\ker(a)\subseteq C_g^{G,b}(M,\mathbb{R})$, suppose $f\in C_g^{G,u}(M,\mathbb{R})$ and $a(f)=0\in [\overline{M}^G,S^1]$. This implies that $a(f) = [\overline{e\circ g}]$ is null-homotopic and hence $\overline{e\circ f}$ has a lift $q\colon\overline{M}^G\rightarrow\mathbb{R}$. Since both $q|_{M/G}$ and $f$ are lifts for $\overline{e\circ f}|_{M/G}$, they must be related by a deck transformation. Since $q$ is bounded, $f$ must be bounded, and $f\in C_g^{G,b}(M,\mathbb{R})$. Hence $\ker(a)=C_g^{G,b}(M,\mathbb{R})$.
\end{proof}
\begin{corollary}
	$\check{H}^1(\overline{M}^G)$ contains a subgroup isomorphic to $C_g^{G,u}(M,\mathbb{R})/C_g^{G,b}(M,\mathbb{R})$.
\end{corollary}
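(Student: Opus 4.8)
The plan is to deduce the corollary immediately from the exact sequence established in the preceding proposition, invoking only the first isomorphism theorem for abelian groups. That proposition supplies a homomorphism $a\colon C_g^{G,u}(M,\mathbb{R})\rightarrow\check{H}^1(\overline{M}^G)$ together with the identification $\ker(a)=C_g^{G,b}(M,\mathbb{R})$. Consequently the image $\textnormal{im}(a)$ is a subgroup of $\check{H}^1(\overline{M}^G)$, and the canonical factorisation of $a$ through its kernel yields an isomorphism
$$C_g^{G,u}(M,\mathbb{R})/C_g^{G,b}(M,\mathbb{R})\xrightarrow{\sim}\textnormal{im}(a)\subseteq\check{H}^1(\overline{M}^G),$$
which is exactly the asserted subgroup. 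No further estimates are needed: all of the analytic content — in particular the lifting argument identifying $\ker(a)$ with the bounded functions — has already been carried out in the proposition, so I do not anticipate any obstacle in the corollary itself.

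The only point worth elaborating, and the reason the corollary is stated, is that the quotient $C_g^{G,u}(M,\mathbb{R})/C_g^{G,b}(M,\mathbb{R})$ is nontrivial, indeed uncountable, whenever $M/G$ is non-compact: one may pull back along $\pi\colon M\rightarrow M/G$ a smooth proper exhaustion function on $M/G$ and rescale it so that its differential tends to zero transverse to the $G$-orbits, producing an unbounded element of $C_g^{G,u}(M,\mathbb{R})$; real scalar multiples of such an element, taken modulo the bounded functions, give an uncountable family of distinct classes in the quotient. Thus the genuine work lies entirely upstream, and the input this corollary feeds into is the passage from $\check{H}^1$ to $K^1$ via the Chern character in the proof of Theorem \ref{thm:infiniteKofCorona}.
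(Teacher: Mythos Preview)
Your proof is correct and matches the paper's approach exactly: the corollary is stated there without proof, being an immediate consequence of the exact sequence via the first isomorphism theorem, precisely as you have written. Your second paragraph anticipates the content of the \emph{next} two propositions in the paper (constructing an unbounded element of $C_g^{G,u}(M,\mathbb{R})$ from a smooth approximation to the distance function and then exhibiting a copy of $(\mathbb{R},+)$ in the quotient), which is useful context but not part of the corollary itself.
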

We now show that $C_g^{G,u}(M,\mathbb{R})/C_g^{G,b}(M,\mathbb{R})$ is uncountable. Since the action of $G$ on $M$ is proper and isometric, $M/G$ has a natural distance function $d_{M/G}$ given by the minimal geodesic distance between orbits (see \cite{Alexandrino} p.~74). $d_{M/G}$ lifts to a function $d_M$ on $M$ that is well-defined on pairs of orbits. For a fixed point $x_0\in M$, the function
\begin{align*}
d_{x_0}\colon M &\rightarrow\mathbb{R},\qquad x\mapsto d_M(x_0,x)
\end{align*}
is $G$-invariant and $1$-Lipschitz. We now show that $d_{x_0}$ can be approximated by a smooth $G$-invariant Lipschitz function.
\begin{proposition}
	For any $\epsilon > 0$, there is a smooth $G$-invariant function $d_{x_0,\epsilon}$ on $M$ such that for any $x\in M$,
	$$|d_{x_0,\epsilon}(x) - d_{x_0}(x)| < \epsilon,\qquad\norm{d(d_{x_0,\epsilon})}_{T^*M} \leq 1 + \epsilon.$$
\end{proposition}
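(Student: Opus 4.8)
The plan is to reduce the problem to smoothing over cocompact pieces of $M$, smooth $G$-invariantly on each piece by $G$-averaging an ordinary mollification, and then reassemble using a $G$-invariant partition of unity, taking the accuracy on each piece small enough to absorb the error terms produced by differentiating the partition functions. For Step~1, proceed as in the proof of Proposition~\ref{prop:Bbounded}: fix a countable, locally finite cover $\{U_k\}_{k\in\mathbb{N}}$ of $M$ by $G$-stable cocompact open sets together with a $G$-invariant partition of unity $\{\rho_k\}$ subordinate to it (Palais), and after shrinking assume $\overline{\operatorname{supp}\rho_k}\subseteq U_k$. Since $\rho_k$ is $G$-invariant and cocompactly supported it descends to a compactly supported smooth function on $M/G$, so $A_k:=\sup_M\norm{d\rho_k}_{T^*M}<\infty$. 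View $U_k$ as an open $G$-submanifold of the cocompact closed $G$-manifold $\overline{U_k}^+$ (the double used in Proposition~\ref{prop:Bbounded}), given a complete $G$-invariant metric, and let $f_k$ be a $G$-invariant $1$-Lipschitz function on $\overline{U_k}^+$ extending $d_{x_0}|_{\overline{U_k}}$; such an $f_k$ is obtained by taking a standard Lipschitz extension, on the compact quotient $\overline{U_k}^+/G$, of the $1$-Lipschitz function induced by $d_{x_0}$, and pulling it back (the orbit-space projection is $1$-Lipschitz, so pullbacks of $1$-Lipschitz functions are $1$-Lipschitz for the Riemannian metric).

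For Step~2, work on the cocompact manifold $N:=\overline{U_k}^+$ with a cut-off function $\mathfrak{c}_N$; because $N/G$ is compact, $\operatorname{supp}\mathfrak{c}_N$ is compact, and properness of the action (applied to $\operatorname{supp}\mathfrak{c}_N$ and a compact set meeting every orbit) gives $K_N:=\sup_{x\in N}\int_G\norm{(d\mathfrak{c}_N)_{g^{-1}x}}_{T^*N}\,dg<\infty$. By a standard Riemannian mollification on the complete manifold $N$, for any $\eta>0$ there is a smooth $g_k$ with $\norm{g_k-f_k}_\infty<\eta$ and $\norm{dg_k}_{T^*N}\le 1+\eta$. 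Set
\[
\tilde d_k(x):=\int_G\mathfrak{c}_N(g^{-1}x)\,g_k(g^{-1}x)\,dg ,
\]
which is smooth (the integrand has locally uniformly compact support in $G$) and $G$-invariant (by left-invariance of $dg$). Using $\int_G\mathfrak{c}_N(g^{-1}x)\,dg\equiv 1$, hence $\int_G d_x[\mathfrak{c}_N(g^{-1}x)]\,dg=0$, together with $G$-invariance of $f_k$, one finds $\norm{\tilde d_k-f_k}_\infty\le\eta$ and
\[
d\tilde d_k(x)=\int_G\bigl(g_k(g^{-1}x)-f_k(x)\bigr)\,d_x[\mathfrak{c}_N(g^{-1}x)]\,dg+\int_G\mathfrak{c}_N(g^{-1}x)\,(dg_k)_{g^{-1}x}\!\circ\! (dg^{-1})_x\,dg ,
\]
so that $\norm{d\tilde d_k}_{T^*N}\le\eta K_N+(1+\eta)$ (the action is isometric). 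Choosing $\eta$ small, depending on a parameter $\delta_k>0$ to be fixed below, we get $\norm{\tilde d_k-d_{x_0}}_\infty<\delta_k$ on $\overline{U_k}$ and $\norm{d\tilde d_k}_{T^*M}\le 1+\delta_k$ there.

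For Step~3, set $\delta_k:=\min\{\epsilon/3,\ \epsilon/(3\cdot 2^k(A_k+1))\}$ and $d_{x_0,\epsilon}:=\sum_k\rho_k\,\tilde d_k$ (extending each $\rho_k\tilde d_k$ by zero); the sum is locally finite, so $d_{x_0,\epsilon}$ is smooth and $G$-invariant. Since $\sum_k\rho_k\equiv1$ we have $d_{x_0,\epsilon}-d_{x_0}=\sum_k\rho_k(\tilde d_k-d_{x_0})$ and $d(d_{x_0,\epsilon})=\sum_k(\tilde d_k-d_{x_0})\,d\rho_k+\sum_k\rho_k\,d\tilde d_k$, whence $|d_{x_0,\epsilon}-d_{x_0}|\le\max_k\delta_k\le\epsilon/3<\epsilon$ and
\[
\norm{d(d_{x_0,\epsilon})}_{T^*M}\le\sum_k\delta_kA_k+\sum_k\rho_k(1+\delta_k)\le\frac{\epsilon}{3}+1+\frac{\epsilon}{3}<1+\epsilon ,
\]
using that $\sum_k\delta_k A_k\le\sum_k\epsilon/(3\cdot 2^k)=\epsilon/3$. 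This is the required function.

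The main obstacle is the tension between the two estimates on a \emph{non-cocompact} base: neither the gradients $\norm{d\rho_k}$ nor the local multiplicity of the cover is globally bounded, so no single smoothing scale works; the resolution is to let the local accuracy $\delta_k$ decay fast enough to be summable against the $A_k$ — which is legitimate precisely because each $\operatorname{supp}\rho_k$ is cocompact, so $A_k<\infty$ — combined with the fact, special to cocompact $G$-manifolds, that a cut-off function satisfies $\int_G\norm{d\mathfrak{c}}\,dg<\infty$ uniformly, which is what makes the $G$-averaging in Step~2 preserve the Lipschitz bound up to $\eta$. Everything else (the $G$-invariant cocompact cover and partition of unity, the double $\overline{U_k}^+$, ordinary Riemannian mollification with controlled Lipschitz constant) is standard and has already been invoked in the paper.
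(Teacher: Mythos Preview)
Your proof is correct and rests on the same core ingredients as the paper's---Lipschitz-preserving mollification, $G$-averaging against a cut-off together with the identity $\int_G d_x[\mathfrak{c}(g^{-1}x)]\,dg=0$, and a $G$-invariant partition of unity over cocompact pieces---but organises them differently. The paper performs a \emph{single} global mollification on $M$, invoking Azagra's theorem to allow a variable accuracy $\delta:M\to(0,\infty)$, followed by a single $G$-averaging with the global cut-off $\mathfrak{c}$; the partition of unity enters only afterwards, to assemble the function $\delta(x)=\sum_i\psi_i(x)\delta_{U_i}$ so that $\delta(x)\cdot\int_G\norm{d\mathfrak{c}(g^{-1}x)}\,dg$ is uniformly small. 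You instead localise first: mollify and $G$-average separately on each cocompact double $\overline{U_k}^+$ with a constant accuracy $\eta$, and then glue via $\sum_k\rho_k\tilde d_k$, which produces an extra term $\sum_k(\tilde d_k-d_{x_0})\,d\rho_k$ that you absorb by letting $\delta_k$ decay summably against the gradient bounds $A_k$. Your route trades the need for Azagra's variable-$\delta$ smoothing for this gluing error; the paper's route does the reverse. One minor caveat: your Lipschitz estimates on $\overline{U_k}^+$ are with respect to the double's metric, which near $\partial\overline U_k$ need not agree with the restriction of the metric on $M$; this is harmless because you only use the estimates on $\operatorname{supp}\rho_k\Subset U_k$, where the two metrics can be arranged to coincide, but it would be cleaner to skip the double and simply mollify $d_{x_0}$ on $M$ itself and observe that the averaging constant $\sup_{x\in U_k}\int_G\norm{d\mathfrak{c}(g^{-1}x)}\,dg$ is finite on each cocompact $U_k$.
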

\begin{proof}
	Since $M$ is a complete Riemannian manifold, it is a proper metric space and hence separable. By \cite{Azagra}, for any function $\delta:M\rightarrow(0,\infty)$ and $r>0$, one can construct a smooth approximation $f_{x_0}$ such that for all $x\in M$,
	$$|f_{x_0}(x)-d_{x_0}(x)|<\delta(x),\qquad\norm{df_{x_0}(x)}_{T^*M}<1+r.$$ 
	Since the $G$-action is proper, we can find a cut-off function $\mathfrak{c}$ on $M$. Let $\tilde{f}_{x_0}$ be the $G$-average of $f_{x_0}$, defined by $\tilde{f}_{x_0}(x)\coloneqq \int_G \mathfrak{c}(g^{-1}x)f_{x_0}(g^{-1}x)\,dg.$ Clearly $\tilde{f}_{x_0}$ is $G$-invariant. We now argue that $\tilde{f}_{x_0}$ is Lipschitz. Observe that $\norm{d\tilde{f}_{x_0}}_{T^*M}$ is equal to
	\begin{align*}
	&\qquad\norm{\int_G\left(d\mathfrak{c}(g^{-1}x)\right)f_{x_0}(g^{-1}(x))+\mathfrak{c}(g^{-1}x)df_{x_0}(g^{-1}x)\,dg}_{T^*M}\qquad\\
	\leq&\norm{\int_G d\mathfrak{c}(g^{-1}x)\left(d(g^{-1}(x))+\delta_x\right)dg}_{T^*M}+\int_G\mathfrak{c}(g^{-1}x)\norm{df_{x_0}(g^{-1}x)}_{T^*M}\,dg,
	\end{align*}
	where $\delta_x\coloneqq f_{x_0}(g^{-1}(x))-d(g^{-1}(x))$. Suppose we choose $\delta_x\leq\delta$ uniformly for some constant $\delta>0$. Then the above expression is bounded by
	$$\norm{\int_G\left(d\mathfrak{c}(g^{-1}x)\right)d(g^{-1}(x))\,dg}_{T^*M} + \delta\int_G\norm{d\mathfrak{c}(g^{-1}x)}_{T^*M}\,dg$$
	$$+\int_G\mathfrak{c}(g^{-1}x)\norm{df_{x_0}(g^{-1}x)}_{T^*M}\,dg.$$
	The first summand vanishes because $d$ is $G$-invariant, since for any $G$-invariant function $l\colon M\rightarrow\mathbb{R}$, we have 
	$$\int_G d\mathfrak{c}(g^{-1}x)l(g^{-1}x)\,dg=l(x)\int_G d\mathfrak{c}(g^{-1}x)\,dg=l(x)d(1)=0.$$ 
	For a fixed $x\in M$, let $\textnormal{supp}\,\mathfrak{c}(g^{-1}x)$ be the support in $G$ of the function $g\mapsto\mathfrak{c}(g^{-1}x)$. Let $U$ be a $G$-stable, cocompact subset of $M$, and let $\delta_U$ be an upper-bound for $\delta$ on $U$. For $x\in U$,
	\begin{align*}
	\delta_U\int_G\norm{d\mathfrak{c}(g^{-1}x)}_{T^*M}\,dg&\leq\delta_U\left(\sup_{x\in U}\norm{d\mathfrak{c}(x)}\right) \left(\sup_{x\in U}\,\left\{\textnormal{vol}\left(\textnormal{supp}\,\mathfrak{c}(g^{-1}x)\right)\right\}\right).
	\end{align*}
	The function $x\mapsto d\mathfrak{c}(x)$ has cocompactly compact support and hence is bounded above on $U$. The function $x\mapsto\textnormal{vol}\,(\textnormal{supp}\,\mathfrak{c}(g^{-1}x))$ is $G$-invariant and so descends to a compact set in $M/G$, whence it is also bounded above on $U$. The above product is bounded by $\delta_UC_U$, where the constant $C_U>0$ depends only on the cocompact set $U$. The third summand above is bounded by $1+r$, given our choice of $f_{x_0}$. Thus the whole expression is strictly less than $1+\delta_U C_U+r$. By picking $r<\frac{\epsilon}{2}$ and $\delta_U$ such that $\delta_U C_U<\frac{\epsilon}{2}$, this expression can be made to be less than $1+\epsilon$. By further choosing $\delta_U\leq\epsilon$, we have that for $x\in U$,
	\begin{align*}
	\left|\tilde{f}_{x_0}(x) - d_{x_0}(x)\right| &= \int_G \mathfrak{c}(g^{-1}x)\left|f_{x_0}(g^{-1}x)-d_{x_0}(g^{-1}x)\right|dg\leq\delta_U\leq\epsilon.
	\end{align*}
	To obtain the estimate on all of $M$, let $\mathcal{U}=\{U_i\colon i\in\mathbb{N}\}$ be a locally finite, countable open cover of $M$ by $G$-stable cocompact sets. There exists a smooth partition of unity subordinate to $\mathcal{U}$ consisting of $G$-invariant functions $\psi_{U_i}$ \cite{Palais}. Then 
	$$\delta(x)\coloneqq \sum_{i=1}^\infty\psi_i(x)\delta_{U_i}(x)$$ 
	is a well-defined smooth function $M\rightarrow\mathbb{R}$, so we may choose the approximation $f_{x_0}$ so that for all $x\in M$, $|f_{x_0}-d_{x_0}|<\delta(x)$. For each $x\in M$, let 
	$$C_x\coloneqq \max\{C_i\colon x\in U_i,\,i\in\mathbb{N}\}.$$ 
	Then it holds that for all $x\in M$, $\delta(x)C_x<\frac{\epsilon}{2}.$ By our previous calculations, for all $x\in M$ we have 
	$$\left|\tilde{f}_{x_0}(x) - d_{x_0}(x)\right|\leq\epsilon,$$ 
	and $\norm{d\tilde{f}_{x_0}}_{T^*M}\leq 1+\epsilon.$ Finally, set $d_{x_0,\epsilon} \coloneqq \tilde{f}_{x_0},$ and we conclude.
\end{proof}

\begin{remark} This argument applies more generally to produce a $G$-invariant smooth approximation $\tilde{f}$ starting from a $G$-invariant Lipschitz function $f$.
\end{remark}

\begin{proposition}
	Let $M$ be a non-cocompact $G$-Riemannian manifold. Then $\check{H}^1(\overline{M}^G)$ contains a subgroup isomorphic to $(\mathbb{R},+)$.
\end{proposition}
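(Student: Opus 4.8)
The plan is to exhibit a single unbounded function in $C_g^{G,u}(M,\mathbb{R})$ and then push the one-parameter family $\{tg : t\in\R\}$ through the embedding supplied by the previous Corollary. Indeed, by that Corollary it suffices to find a subgroup of $C_g^{G,u}(M,\mathbb{R})/C_g^{G,b}(M,\mathbb{R})$ isomorphic to $(\R,+)$; and for any unbounded $g\in C_g^{G,u}(M,\mathbb{R})$ the assignment $t\mapsto[tg]$ is a group homomorphism $\R\to C_g^{G,u}(M,\mathbb{R})/C_g^{G,b}(M,\mathbb{R})$ which is injective, since $(t-s)g$ can be bounded only if $t=s$. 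So the whole problem reduces to producing such a $g$.

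First I would fix $x_0\in M$ and invoke the previous Proposition with $\epsilon=1$ to get a smooth $G$-invariant function $\rho\coloneqq d_{x_0,1}$ with $|\rho-d_{x_0}|<1$ and $\norm{d\rho}_{T^*M}\leq 2$. Completeness of $M$ makes it a proper metric space, and since the $G$-action is proper and isometric the orbit-distance metric of \cite{Alexandrino} makes $M/G$ a proper metric space as well; as $M/G$ is non-compact, this metric is unbounded, so $d_{x_0}$ — and therefore $\rho$ — is an unbounded function that descends to a proper function on $M/G$. Hence $\{\rho\leq R\}$ is cocompact for every $R$.

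Next I would damp the gradient at infinity by composing with a slowly growing profile. Set $g\coloneqq\log(\rho+2)$; this is smooth (as $\rho\geq d_{x_0}-1\geq -1>-2$) and $G$-invariant, with $dg=(\rho+2)^{-1}\,d\rho$, so $\norm{dg}_{T^*M}\leq 2(\rho+2)^{-1}$, which is $<\epsilon$ outside the cocompact set $\{\rho\leq 2/\epsilon\}$. Thus $\norm{dg}_{T^*M}\in C_0^G(M)$, so $g\in C_g^{G,u}(M,\mathbb{R})$, and $g$ is unbounded because $\rho$ is. Feeding this $g$ into the first paragraph yields the claimed subgroup of $\check{H}^1(\overline{M}^G)$ isomorphic to $(\R,+)$.

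The estimates on $dg$ and the elementary group theory are routine; the one point that genuinely needs care is the tension between the two growth requirements of $C_g^{G,u}(M,\mathbb{R})$ — we need a function that is unbounded yet has derivative decaying transverse to the orbits — which is resolved by composing the proper, merely Lipschitz approximation $\rho$ from the previous Proposition with a concave unbounded profile such as $\log$ (or $\sqrt{\,\cdot\,}$). The other thing to verify carefully is that $M/G$ is a proper metric space, since this is simultaneously what makes $\norm{dg}_{T^*M}$ vanish at infinity and — via non-compactness of $M/G$ — what forces $g$ to be unbounded, which is exactly where the hypothesis enters.
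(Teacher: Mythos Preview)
Your argument is correct and rests on the same construction as the paper: both take a smooth $G$-invariant Lipschitz approximation $\rho$ of the orbit-distance function $d_{x_0}$ (from the preceding proposition) and then pass to $\log$ of it to obtain an unbounded element of $C_g^{G,u}(M,\mathbb{R})$ whose real scalar multiples embed $(\mathbb{R},+)$ into the quotient $C_g^{G,u}(M,\mathbb{R})/C_g^{G,b}(M,\mathbb{R})$. Your shift $\log(\rho+2)$ is in fact a small improvement over the paper's $r\ln d_{x_0,\epsilon}$, since it cleanly avoids the region where $\rho$ may be near zero.

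The only substantive difference is in the final packaging: you stop once $t\mapsto[tg]$ is seen to be an injective homomorphism $\mathbb{R}\to C_g^{G,u}/C_g^{G,b}$, which is all the proposition requires, whereas the paper goes on to argue that the whole quotient is isomorphic to $(\mathbb{R},+)$ by combining this embedding with a separability bound on the cardinality and the observation that the quotient is torsion-free and divisible. Your route is shorter and entirely adequate for the stated result; the paper's additional work yields a stronger structural statement that is not actually used later.
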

\begin{proof}
	Fix $x_0\in M$. Define $d_M, d_{M/G}, d_{x_0}$ and $d_{x_0,\epsilon}$ as in the proof of the previous proposition, so that for $y\in M$, 
	$$|d_{x_0,\epsilon}(y) - d_{x_0}(y)| < \epsilon, \qquad \norm{d(d_{x_0,\epsilon})(y)}_{T^*M} < 2.$$ 
	For each $r\in\mathbb{R}$, consider the function $\rho_r\in C_g^{G,u}(M)$ defined by $$\rho_r\colon M\rightarrow\mathbb{R},\qquad y\mapsto r\ln{d_{x_0,\epsilon}(y)}.$$ 
	If $r\neq s$, $\rho_r-\rho_s$ is unbounded. Hence the subgroup 
	$$\{[\rho_r]\colon r\in\mathbb{R}\}\subseteq C_g^{G,u}(M,\mathbb{R})/C_g^{G,b}(M,\mathbb{R})$$ 
	is uncountable; thus $C_g^{G,u}(M,\mathbb{R})/C_g^{G,b}(M,\mathbb{R})$ has rank at least $2^{\aleph_0}$. Now $(M,d_M)$ is a proper non-compact metric space, hence separable. Thus $(M/G, d_{M/G})$ is separable. Thus the rank of $C_g^{G,u}(M,\mathbb{R})$, and hence that of $$C_g^{G,u}(M,\mathbb{R})/C_g^{G,b}(M,\mathbb{R}),$$ is at most $2^{\aleph_0}$. Since $C_g^{G,u}(M,\mathbb{R})/C_g^{G,b}(M,\mathbb{R})$ is an abelian, divisible and torsion-free group, it is isomorphic to $(\mathbb{R},+)$.
\end{proof}
\begin{corollary}
	Let $M$ be a complete non-cocompact $G$-Riemannian manifold. Then both $K_1(C_g^G(M))$ and $K_1(C(\partial_h^{G}(\overline{M})))$ are uncountable.
\end{corollary}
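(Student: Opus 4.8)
The plan is to bootstrap the statement from the preceding proposition --- which produces a copy of $(\mathbb{R},+)$ inside $\check{H}^1(\overline{M}^G)$ --- using the Chern character on one side and the six-term exact sequence in $K$-theory on the other. First I would handle $K_1(C_g^G(M))$. By construction $\overline{M}^G$ is the Gelfand spectrum of the unital commutative $C^*$-algebra $C_g^G(M)$, so $C_g^G(M)\cong C(\overline{M}^G)$ and $K_1(C_g^G(M))\cong K^1(\overline{M}^G)$. As in the corollary to the theorem of Keesling above, the Chern character gives an isomorphism $\ch\colon K^1(\overline{M}^G)\otimes\mathbb{Q}\xrightarrow{\sim}\check{H}^{\textnormal{odd}}(\overline{M}^G;\mathbb{Q})$. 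Since $\check{H}^1(\overline{M}^G)$ contains $(\mathbb{R},+)$ by the previous proposition, and $\mathbb{Q}$ is flat with $\mathbb{R}\otimes_{\mathbb{Z}}\mathbb{Q}\cong\mathbb{R}$, the group $\check{H}^1(\overline{M}^G;\mathbb{Q})$, and hence $\check{H}^{\textnormal{odd}}(\overline{M}^G;\mathbb{Q})$ and $K^1(\overline{M}^G)\otimes\mathbb{Q}$, is uncountable. Since $A\otimes\mathbb{Q}$ is a countable direct limit of copies of a countable group $A$, a countable $K^1(\overline{M}^G)$ would force $K^1(\overline{M}^G)\otimes\mathbb{Q}$ to be countable; hence $K^1(\overline{M}^G)=K_1(C_g^G(M))$ is uncountable.

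For $K_1(C(\partial_h^{G}(\overline{M})))$ I would feed the short exact sequence $0\to C_0^G(M)\to C_g^G(M)\xrightarrow{q} C(\partial_h^{G}(\overline{M}))\to 0$ into the six-term exact sequence in $K$-theory. The crucial point is that $K_*(C_0^G(M))$ is countable: indeed $C_0^G(M)\cong C_0(M/G)$, and since $M$ is second countable and the orbit projection $M\to M/G$ is open, $M/G$ is a second countable locally compact Hausdorff space, so $C_0(M/G)$ is separable; and the $K$-theory of any separable $C^*$-algebra is countable, because in a second countable space the partition of the projections (resp.\ unitaries) in each matrix algebra into the relatively open classes cut out by the condition $\|p-q\|<1$ is necessarily countable. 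By exactness at $K_1(C_g^G(M))$, the kernel of $q_*\colon K_1(C_g^G(M))\to K_1(C(\partial_h^{G}(\overline{M})))$ is the image of the countable group $K_1(C_0^G(M))$, hence countable; so the image of $q_*$, being $K_1(C_g^G(M))$ modulo a countable subgroup, is uncountable by the first part. A fortiori $K_1(C(\partial_h^{G}(\overline{M})))$ is uncountable.

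The two places that need care are both standard. The rational-isomorphism property of the Chern character for the compact --- but possibly pathological --- space $\overline{M}^G$ I would justify exactly as the paper already does for the metric Higson corona, via continuity of $K$-theory and of \v{C}ech cohomology under the relevant inverse limits. The countability of $K_*(C_0(M/G))$ is the step I expect to require the most attention to state cleanly: one must first confirm that $M/G$ is genuinely second countable, which follows from second countability of $M$ together with openness of the orbit map, and then invoke the fact that separable $C^*$-algebras have countable $K$-theory. Everything else is the routine assembly of the six-term sequence together with the previous proposition.
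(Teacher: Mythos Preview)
Your argument for $K_1(C_g^G(M))$ is exactly the paper's: identify $C_g^G(M)\cong C(\overline{M}^G)$, apply the rational Chern character, and feed in the preceding proposition. For the corona, the paper simply writes ``the second statement follows from the first by the Five Lemma'' with no further detail, whereas you run the six-term sequence for $0\to C_0^G(M)\to C_g^G(M)\to C(\partial_h^G\overline{M})\to 0$ and use that $K_*(C_0^G(M))\cong K_*(C_0(M/G))$ is countable because $C_0(M/G)$ is separable. Your route is correct and strictly more explicit: the paper's Five~Lemma remark presumably points to the Chern-character ladder for the pair $(\overline{M}^G,\partial_h^G\overline{M})$, but making that precise still requires exactly the countability input you supply (either for $K_*(C_0(M/G))$ or for $\check{H}^*(M/G;\mathbb{Q})$), so nothing is gained by the detour. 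Your caveats are well placed; in particular, the second countability of $M/G$ is genuinely needed and follows, as you say, from openness of the orbit map together with second countability of $M$.
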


\begin{proof}
	The first statement follows from the Chern character isomorphism
	$$\ch\colon K^1(\overline{M}^G)\otimes\mathbb{Q}\xrightarrow{\sim}\check{H}^1(\overline{M}^G,\mathbb{Q})\cong \check{H}^1(\overline{M}^G)\otimes\mathbb{Q}$$
	together with the above proposition, noting that $K_1(C_g^G(M))\cong K^1(\overline{M}^G)$. The second statement follows from the first by the Five Lemma.
\end{proof}
This shows that when $M$ is a complete non-cocompact $G$-Riemannian manifold, the group $K_1(C(\partial_h^{G}(\overline{M})))$ is uncountable. On the other hand $K_0(C_g^G(M)))\cong K^0(\overline{M}^G)$ contains a copy of $\mathbb{Z}$, since $\overline{M}^G$ is a compact space, so that $K^0(\partial_h^G(\overline{M}))$ is also infinite. In summary:
\begin{theorem}
	\label{thm:infiniteKofCorona}
	Let $M$ be a complete non-cocompact $G$-Riemannian manifold. Then the $K$-theory of the Higson $G$-corona of $M$ is uncountable.
\end{theorem}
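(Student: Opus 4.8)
The plan is to reduce everything to the first \v{C}ech cohomology of the Higson $G$-compactification $\overline{M}^G$ and to exhibit inside it a copy of $(\mathbb{R},+)$. Since $C_g^G(M)$ is a unital commutative $C^*$-algebra with spectrum $\overline{M}^G$, we have $K_j(C_g^G(M))\cong K^j(\overline{M}^G)$, and the Chern character gives a rational isomorphism $K^1(\overline{M}^G)\otimes\mathbb{Q}\cong\check{H}^{\mathrm{odd}}(\overline{M}^G)\otimes\mathbb{Q}$. So it suffices to show $\check{H}^1(\overline{M}^G)$ is uncountable: the statement for $K_1(C(\partial_h^G(\overline{M})))$ then follows by the Five Lemma from the six-term exact sequence of the ideal $C_0^G(M)\cong C_0(M/G)$ in $C_g^G(M)$ (using that $K^*(M/G)$ is countable, $M/G$ being separable and finite-dimensional), while the statement for $K^0(\partial_h^G(\overline{M}))$ is immediate because $\overline{M}^G$ is a compact connected space, so $K^0(\overline{M}^G)\supseteq\mathbb{Z}$.

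To get into $\check{H}^1(\overline{M}^G)$ I would use the algebra $C_g^{G,u}(M,\mathbb{R})$ of real $G$-invariant smooth functions with differential in $C_0^G(M)$, \emph{not} assumed bounded. Identifying $\check{H}^1(\overline{M}^G)\cong[\overline{M}^G,S^1]$ and post-composing with the universal cover $e\colon\mathbb{R}\to S^1$, any such $f$ has $e\circ f$ extending continuously to $\overline{M}^G$, which gives a homomorphism $a\colon C_g^{G,u}(M,\mathbb{R})\to\check{H}^1(\overline{M}^G)$; its kernel is exactly the bounded functions $C_g^{G,b}(M,\mathbb{R})$, since a bounded such $f$ lifts through $e$ over $\overline{M}^G$ while an unbounded one cannot (two lifts of $\overline{e\circ f}|_{M/G}$ differ by a deck transformation in $\mathbb{Z}$). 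Hence $\check{H}^1(\overline{M}^G)$ contains $C_g^{G,u}(M,\mathbb{R})/C_g^{G,b}(M,\mathbb{R})$, and it remains to see this quotient is uncountable.

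The heart of the matter --- and the main obstacle --- is producing a concrete unbounded element of $C_g^{G,u}(M,\mathbb{R})$: a smooth $G$-invariant function that grows transverse to the orbits but has bounded differential. Fixing $x_0\in M$, the orbital distance $d_{x_0}(x)=d_{M/G}(Gx_0,Gx)$ is $G$-invariant and $1$-Lipschitz but not smooth, so it must be smoothed $G$-equivariantly without spoiling the Lipschitz bound. I would apply Azagra's smooth-approximation theorem to obtain a (non-equivariant) smooth $f_{x_0}$ with $|f_{x_0}-d_{x_0}|<\delta$ and $\|df_{x_0}\|<1+r$, then $G$-average: $\tilde f_{x_0}(x)=\int_G\mathfrak{c}(g^{-1}x)f_{x_0}(g^{-1}x)\,dg$ for a cut-off function $\mathfrak{c}$. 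The delicate point is the term $\int_G d\mathfrak{c}(g^{-1}x)\,f_{x_0}(g^{-1}x)\,dg$ appearing in $d\tilde f_{x_0}$: writing $f_{x_0}=d_{x_0}+(\text{error})$ and using that $\int_G d\mathfrak{c}(g^{-1}x)\,l(g^{-1}x)\,dg=l(x)\,d(1)=0$ for $G$-invariant $l$ kills the leading part, leaving an error controlled uniformly on each $G$-stable cocompact $U$ by $\sup_U\|d\mathfrak{c}\|$ times the cocompactly bounded volume of the support of $g\mapsto\mathfrak{c}(g^{-1}x)$; a $G$-invariant partition of unity (Palais) together with a locally small approximation error then give a global smooth $G$-invariant $d_{x_0,\epsilon}$ with $\|d(d_{x_0,\epsilon})\|\le 1+\epsilon$ and $|d_{x_0,\epsilon}-d_{x_0}|<\epsilon$. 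Finally put $\rho_r:=r\log d_{x_0,\epsilon}$ for $r\in\mathbb{R}$: since $d_{x_0,\epsilon}\to\infty$ transverse to the orbits (here non-cocompactness is used), $d\rho_r=(r/d_{x_0,\epsilon})\,d(d_{x_0,\epsilon})\to 0$ there so $\rho_r\in C_g^{G,u}(M,\mathbb{R})$, and for $r\ne s$ the difference $\rho_r-\rho_s$ is unbounded, so the classes $[\rho_r]$ are pairwise distinct in the quotient. Thus $C_g^{G,u}(M,\mathbb{R})/C_g^{G,b}(M,\mathbb{R})$ has rank $\ge 2^{\aleph_0}$; separability of $M/G$ bounds the rank above by $2^{\aleph_0}$, so, being abelian, torsion-free and divisible, the quotient is $\cong(\mathbb{R},+)$, which embeds in $\check{H}^1(\overline{M}^G)$. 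Tensoring with $\mathbb{Q}$ and applying the Chern character finishes the proof.
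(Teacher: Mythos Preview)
Your proposal is correct and follows essentially the same route as the paper: the exact sequence $0\to C_g^{G,b}(M,\mathbb{R})\to C_g^{G,u}(M,\mathbb{R})\xrightarrow{a}\check{H}^1(\overline{M}^G)$, the smooth $G$-invariant approximation of the orbital distance via Azagra plus $G$-averaging with a cut-off (including the cancellation $\int_G d\mathfrak{c}(g^{-1}x)\,l(g^{-1}x)\,dg=0$ for $G$-invariant $l$), the family $\rho_r=r\log d_{x_0,\epsilon}$, and the Chern-character/six-term-sequence finish are all exactly what the paper does. Your justification of the passage from $K_1(C_g^G(M))$ to $K_1(C(\partial_h^G\overline{M}))$ (via countability of $K_*(C_0(M/G))$) is in fact more explicit than the paper's one-line appeal to the Five Lemma.
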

	\vspace{1cm}

\section{Invariant Metrics of Positive Scalar Curvature}
\label{sec:PSC}
Let $M$ be a $G$-spin manifold with spinor bundle $S$ and Dirac operator $\dirac_0$. Let $\Phi$ be a $G$-admissible endomorphism, and form the $G$-Callias-type operator $B=\dirac+\Phi$, where $\dirac$ is a $\mathbb{Z}_2$-graded version of $\dirac_0$ acting on the $\mathbb{Z}_2$-graded bundle $E$ constructed from $S=E_0$, in the notation of section \ref{sec:G-Callias}. Form the $\mathbb{Z}_2$-graded $G$-Sobolev modules $\mathcal{E}^i=\mathcal{E}^i(E)$ as in section \ref{sec:G-Sobolev}. Then by Proposition \ref{prop:Bbounded}, $B$ extends to a bounded adjointable operator $\mathcal{E}^1\rightarrow\mathcal{E}^0$. For $\lambda\in\mathbb{R}$, let 
$$R(\lambda) = (B^2+f+\lambda^2)^{-1},$$ 
which exists by Lemma \ref{resolventlemma}. Normalising $B$ gives rise to the operator $F \coloneqq  BR(0)^{1/2}\in\mathcal{L}(\mathcal{E}^0)$, by subsection \ref{subsec:equivalentdef}.

Denote the $G$-invariant Riemannian metric on $M$ by $g$, not to be confused with elements of the group $G$. Suppose the scalar curvature $\kappa^g$ associated to $g$ is everywhere positive. By Lichnerowicz's formula, 
$$B^2 = \dirac^2+\dirac\Phi+\Phi\dirac+\Phi^2 = \nabla^{*}\nabla+\frac{\kappa^g}{4}+\dirac\Phi+\Phi\dirac+\Phi^2,$$
where $\nabla$ is the connection on $E$. We now show that $g$ can be scaled by an appropriate constant to make $B$ invertible. For $r>0$, define the metric $rg$ on $M$ by $rg(v,w)\coloneqq r\cdot g(v,w)$, and let $D^{rg}$ be the associated Dirac operator. The $G$-admissible endomophism $\Phi$ is still $G$-admissible for $D^{rg}$, so the operator $B^{rg}\coloneqq D^{rg}+\Phi$ is again of $G$-Callias-type.

\begin{proposition}
	\label{prop:scaling}
	There exists $r>0$ for which $B^{rg}$ is strictly positive.
\end{proposition}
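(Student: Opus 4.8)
The plan is to combine Lichnerowicz' formula with the observation that, under the constant rescaling $g\mapsto rg$, the three pieces of $(B^{rg})^2$ scale at different rates, so that the positive scalar curvature can be forced to dominate. First I would record the scaling: under $g\mapsto rg$ one has $\dirac^{rg}=r^{-1/2}\dirac$ and $\kappa^{rg}=r^{-1}\kappa^g$, and since $\Phi$ is a metric-independent bundle endomorphism, $\dirac^{rg}\Phi+\Phi\dirac^{rg}=r^{-1/2}A$, where $A:=\dirac\Phi+\Phi\dirac$ is the bounded, $G$-invariant, self-adjoint order-$0$ endomorphism of Definition \ref{def:Gadmissible}(a)--(b). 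Substituting Lichnerowicz' identity $\dirac^2=\nabla^*\nabla+\kappa^g/4$ into $(B^{rg})^2=(\dirac^{rg})^2+\dirac^{rg}\Phi+\Phi\dirac^{rg}+\Phi^2$ yields, as operators $\mathcal{E}^2\to\mathcal{E}^0$,
\[
(B^{rg})^2=r^{-1}\nabla^*\nabla+r^{-1}\tfrac{\kappa^g}{4}+r^{-1/2}A+\Phi^2 .
\]
Since $\nabla^*\nabla\geq 0$, and by the Kasparov-averaging argument used in Proposition \ref{prop:Positivity}, it suffices to produce $r>0$ and $c>0$ for which the fibrewise endomorphism $Q_r(x):=r^{-1}\tfrac{\kappa^g(x)}{4}+r^{-1/2}A(x)+\Phi(x)^2$ satisfies $Q_r(x)\geq c$ for every $x\in M$; then $(B^{rg})^2\geq c$ on $\mathcal{E}^0$ and $B^{rg}$ is invertible.

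Next I would split $M$ using the cocompact set $K$ of Definition \ref{def:Gadmissible}(c), on whose complement $A+\Phi^2\geq C$. Because $\overline{\pi(K)}$ is compact and $\kappa^g$ is a positive, continuous, $G$-invariant function, $\kappa_0:=\inf_K\kappa^g>0$; put $M_A:=\norm{A}_\infty<\infty$. On $K$, using only $\Phi^2\geq 0$, one has $Q_r\geq r^{-1}\kappa_0/4-r^{-1/2}M_A$, which is bounded below by a positive constant as soon as $r^{-1/2}>4M_A/\kappa_0$, i.e.\ once $r$ is sufficiently small; I would fix such an $r$. On $M\setminus K$ the danger is that for small $r$ the scaled cross term $r^{-1/2}A$ overwhelms the positivity supplied by $A+\Phi^2\geq C$. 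Here I would invoke the explicit form of $\Phi$ from Section \ref{sec:Phi}: there $A$ has diagonal block entries $\pm 2i\,c(d^{l^2}P)$ (respectively the analogue built from $U$), whence $\norm{A(x)}\leq 2\norm{d^{l^2}P(x)}$ tends to $0$ transverse to the $G$-orbits. Having already fixed $r$, I would then enlarge $K$ to a cocompact $\hat K\supseteq K$ on whose complement $\norm{A}<C/(2(1+r^{-1/2}))$; there $Q_r\geq(C-\norm{A})-r^{-1/2}\norm{A}\geq C/2$, while on the cocompact remainder $\hat K\setminus K$ I would feed both $A+\Phi^2\geq C$ and $\kappa^g\geq\inf_{\hat K}\kappa^g>0$ into $Q_r$ to get one more positive lower bound. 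Taking $c$ to be the minimum of these finitely many positive constants completes the argument, and the pointwise bound transfers to the $C^*(G)$-valued inner product on $\mathcal{E}^0$ exactly as in Proposition \ref{prop:Positivity}.

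The step I expect to be the main obstacle is precisely this last reconciliation. The region $K$ on which one has only the positivity of $\kappa^g$ to work with is fixed in advance by $G$-admissibility, and on it one is forced to take $r$ small so that $r^{-1}\kappa^g$ beats $r^{-1/2}A$; but small $r$ magnifies the cross term $r^{-1/2}A$, and off $K$ one must keep it from destroying the bound $A+\Phi^2\geq C$. These two requirements are compatible only because $\norm{\dirac\Phi+\Phi\dirac}$ decays transverse to the $G$-orbits for the endomorphisms of Section \ref{sec:Phi}, so that after fixing $r$ one still has room to push the non-negligible part of $A$ into a cocompact set; making this quantitative, and checking that the leftover cocompact shell $\hat K\setminus K$ is genuinely controlled, is where the real work lies.
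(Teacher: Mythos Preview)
Your strategy is exactly the paper's: apply Lichnerowicz and rescale $g\mapsto rg$ so that on the cocompact set $K$ the term $r^{-1}\kappa^g/4$ overwhelms the cross term $r^{-1/2}A$. The paper's argument is much terser than yours: it treats only $K$ explicitly, records that $\Phi$ is still $G$-admissible for $\dirac^{rg}$, and then simply concludes, without discussing how the admissibility set or constant changes with $r$ --- which is precisely the issue you isolate. (Your scaling exponents $\kappa^{rg}=r^{-1}\kappa^g$, $\dirac^{rg}=r^{-1/2}\dirac$ are the standard ones; the paper writes $\kappa^{rg}=\kappa^g/r^{2}$ and $c^{rg}=c/r$, but the shape of the argument is the same.)

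Your worry about the shell $\hat K\setminus K$ is justified, and the fix you sketch does not close as written. Feeding both $A+\Phi^2\geq C$ and $\kappa^g\geq\inf_{\hat K}\kappa^g$ into $Q_r$ on that region gives
\[
Q_r\ \geq\ r^{-1}\tfrac{\inf_{\hat K}\kappa^g}{4}\ +\ C\ -\ (r^{-1/2}-1)M_A,
\]
but $r$ was already fixed using $\kappa_0=\inf_K\kappa^g$, whereas $\hat K\supsetneq K$ was chosen \emph{after} $r$; hence $\inf_{\hat K}\kappa^g$ may be strictly smaller than $\kappa_0$, the curvature term no longer dominates, and the displayed bound can be negative. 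This is exactly the circular dependence you anticipate in your final paragraph: the cocompact set on which scalar curvature must do the work grows with $r^{-1/2}$, while the admissible range of $r$ shrinks with the infimum of $\kappa^g$ over that set. With only pointwise positivity of $\kappa^g$, neither your sketch nor the paper's brief proof explains how to break this loop; one needs either a uniform lower bound on $\kappa^g$, or a more careful two-parameter deformation (for instance rescaling $\Phi$ alongside $g$, which does not change the index class) arranged so that the competing constraints on $r$ become compatible.
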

\begin{proof}
	$\dirac\Phi + \Phi\dirac$ is bounded away from $0$ outside a cocompact subset $K\subseteq M$. It is equal to Clifford multiplication by a one-form $dR$, where $R$ is a projection or unitary depending on $\dim M$, and is bounded below by a constant $C_\Phi$ depending on $\Phi$. Now if $\frac{\kappa^g}{4}>-C_\Phi+\epsilon$ 
	everywhere on $K$ for some $\epsilon > 0$, then $B^2$ is strictly positive. Otherwise, note that $\kappa^{rg} = \frac{\kappa^g}{r^2}$, and $c^{rg} = \frac{c}{r}$, where $c$ means Clifford multiplication by a one-form. The latter implies that $C^{rg}_\Phi = \frac{C_\Phi}{r},$ since, for a fixed $\Phi$, the constant $C_\Phi$ scales with the metric in the same way as $c$. 
	As $K$ is cocompact, $\kappa^g$ is bounded below by some $\kappa_0>0$ on $K$. Thus we can find $r>0$ such that $\frac{\kappa^{rg}}{4}>-C^{rg}_\Phi+\epsilon.$ It follows that $(B^{rg})^2$ is a strictly positive operator.
\end{proof}
\begin{proposition}
	\label{prop:invertibleCallias}
	Let $B$ be a $G$-Callias-type operator with $B^2$ strictly positive. Then $B$ has a bounded adjointable inverse.
\end{proposition}
\begin{proof}
	It suffices to show that $B^2\colon\mathcal{E}^2\rightarrow\mathcal{E}^0$ is invertible. By regularity of $B$ and hence $B^2$, $B^2+\mu^2$ is surjective for every positive number $\mu^2$. Further, since $B^2$ is strictly positive, $B^2 + \mu^2$ is injective with bounded inverse. To see that $(B^2+\mu^2)^{-1}$ is adjointable, let $T\coloneqq (B^2+\mu^2)^{-1}$. Then $T$ is self-adjoint as a bounded operator $\mathcal{E}^0\rightarrow\mathcal{E}^0$, which follows from Lemma 4.1 in \cite{Lance} and $\langle u,Tu\rangle_{\mathcal{E}^0} = \langle(B+\mu^2)Tu,Tu\rangle_{\mathcal{E}^0}\geq\mu^2\langle Tu,Tu\rangle_{\mathcal{E}^0}\geq 0.$ Next, for any $w\in\mathcal{E}^0$ and $u\in\mathcal{E}^2$, we have
	\begin{align*}
	\langle T u,w\rangle_{\mathcal{E}^2} &= \langle B^2 Tu,B^2 w\rangle_{\mathcal{E}^0} +\langle B Tu,B w\rangle_{\mathcal{E}^0} + \langle T u, w\rangle_{\mathcal{E}^0}\\
	&=\langle(B^2+\mu^2)Tu,B^2 w\rangle_{\mathcal{E}^0} + (1-\mu^2)\langle Tu,B^2 w\rangle_{\mathcal{E}^0}+ \langle u,Tw\rangle_{\mathcal{E}^0}\\
	&=\langle u,B^2 w\rangle_{\mathcal{E}^0} + (1-\mu^2)\langle u,TB^2 w\rangle_{\mathcal{E}^0} + \langle u,T w\rangle_{\mathcal{E}^0}\\
	&= \langle u,(B^2 + (1- \mu^2)TB^2 + B)w\rangle_{\mathcal{E}^0},
	\end{align*}
	Thus $(B^2+\mu^2)^{-1}\in\mathcal{L}(\mathcal{E}^0,\mathcal{E}^2)$. Now note that $B^2 = (1-\mu^2(B^2+\mu^2)^{-1})(B^2+\mu^2).$ 
	Since $B^2$ is a strictly positive operator, there exists $C>0$ such that for all $s\in\mathcal{E}^2$, we have $\langle B^2 s,s\rangle_{\mathcal{E}^0}\geq C\langle s,s\rangle_{\mathcal{E}^0}$. It follows from the Cauchy-Schwarz inequality for Hilbert modules that for any $\psi\in\mathcal{E}^0$, $$\norm{\mu^2(B^2+\mu^2)^{-1}\psi}_{\mathcal{E}^0}\leq\frac{\mu^2}{\mu^2+C}\norm{(B^2+\mu^2)T\psi}_{\mathcal{E}^0} = \frac{\mu^2}{\mu^2+C}\norm{\psi}_{\mathcal{E}^0}.$$ 
	Hence $(1-\mu^2(B^2+\mu^2)^{-1})$ has an adjointable inverse, and
	\begin{align*}
	(B^2)^{-1} &= T(1-\mu^2(B+\mu^2)^{-1})^{-1}.\qedhere
	\end{align*}
\end{proof}
This yields the following application to $G$-invariant metrics of positive scalar curvature:
\begin{theorem}
	\label{thm:obstruction}
	Let $M$ be a $G$-spin manifold with Dirac operator $\dirac_0$. Suppose $M$ admits a $G$-invariant positive scalar curvature metric. Let $D$ be the $\mathbb{Z}_2$-graded Dirac operator formed from $D_0=\dirac_0$ as in section \ref{sec:G-Callias}. Then $\ind_G F=0\in K_0(C^*(G)),$ where $F$ is the bounded transform of any $G$-Callias-type operator defined by a $G$-admissible $\Phi$.
\end{theorem}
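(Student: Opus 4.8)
The plan is to deform $B=D+\Phi$, through a path of $G$-Callias-type operators, into an \emph{invertible} operator, exploiting the positive-scalar-curvature hypothesis via the rescaling of Proposition \ref{prop:scaling}; homotopy invariance of the $C^*(G)$-index will then force $\ind_G F=0$.

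First I would dispose of the easy case: if $B^2$ is strictly positive, then $B$ is invertible by Proposition \ref{prop:invertibleCallias}, so $B^2+0=B^2$ already has an inverse in $\mathcal{L}(\mathcal{E}^0,\mathcal{E}^2)$, and $B$ is $G$-invertible at infinity with the choice $f=0$. Since $\ind_G F$ is independent of the cocompactly supported function used to define the bounded transform (Theorem \ref{thm:CalliasKKClass}), I may compute it with $f=0$, in which case $F=B(B^2)^{-1/2}$ is an odd, self-adjoint element of $\mathcal{L}(\mathcal{E}^0)$ with $F^2=1$ on the nose. Then $(\mathcal{E}^0,F)$ is a degenerate Kasparov $(\mathbb{C},C^*(G))$-module, so $\ind_G F=[\mathcal{E}^0,F]=0$.

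Next I would apply Proposition \ref{prop:scaling}: rescaling the metric $g\mapsto rg$ keeps $\Phi$ admissible, so $B^{rg}:=D^{rg}+\Phi$ is again a $G$-Callias-type operator, and for a suitable $r>0$ the operator $(B^{rg})^2$ is strictly positive, whence $\ind_G F^{rg}=0$ by the previous paragraph. Because a constant rescaling leaves the Levi-Civita (hence the spin) connection unchanged, one has $D^{rg}=r^{-1/2}D$ under the canonical identification of spinor bundles, while the $L^2$-inner product is multiplied by the constant $r^{\dim M/2}$; consequently $\mathcal{E}^0(rg)$ is isomorphic, as a Hilbert $C^*(G)$-module, to $\mathcal{E}^0(g)$, and under this identification $B^{rg}$ becomes $r^{-1/2}D+\Phi$. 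It then remains to connect $B$ to $B^{rg}$, which I would do with the straight-line path $B_t:=\alpha_t D+\Phi$, $\alpha_t:=(1-t)+t\,r^{-1/2}>0$, $t\in[0,1]$, so that $B_0=B$ and $B_1=B^{rg}$. Each $\alpha_t D$ is a Dirac-type operator, and the admissibility conditions of Definition \ref{def:Gadmissible} for $\Phi$ relative to $\alpha_t D$ hold uniformly in $t$ on the compact interval (outside a single cocompact set $K$ one has $\alpha_t(D\Phi+\Phi D)+\Phi^2\to 1$ transverse to the orbits, uniformly in $t$), so each $B_t$ is $G$-Callias-type, hence $G$-invertible at infinity by Theorem \ref{thm:CalliasInvertibility}; I would also fix one $G$-invariant, cocompactly supported $f\geq 0$ making $B_t^2+f$ invertible for all $t$, which is possible since the positivity in Proposition \ref{prop:Positivity} can be arranged uniformly over $[0,1]$.

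The main obstacle will be to verify that $t\mapsto F_t$ is norm-continuous in $\mathcal{L}(\mathcal{E}^0)$. For this I would revisit the resolvent estimates of Section \ref{sec:G-Invertible} (Lemma \ref{lem:d}, Lemma \ref{resolventlemma} and their corollaries), check that with the single $f$ above all the relevant constants are uniform in $t\in[0,1]$, and then use the integral formula $F_t=\frac{2}{\pi}\int_0^\infty B_t R_t(\lambda)\,d\lambda$ of Definition \ref{def:definitionofF} together with dominated convergence to conclude $\norm{F_t-F_{t'}}_{\mathcal{L}(\mathcal{E}^0)}\to 0$ as $t'\to t$. Given this, $\{F_t\}$ is a norm-continuous path of $C^*(G)$-Fredholm operators on $\mathcal{E}^0$, and operator-homotopy invariance of the Kasparov class yields $\ind_G F=\ind_G F_0=\ind_G F_1=\ind_G F^{rg}=0$. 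A shorter, but less self-contained, route to the equality $\ind_G F=\ind_G F^{rg}$ is to invoke the $KK$-product descriptions of Propositions \ref{evenKK} and \ref{oddKK}, $[\mathcal{E}^0,F]=[\Phi]\otimes_{C_0^G(M)}[D_0]$, and the standard fact that rescaling the initial Dirac operator $D_0\rightsquigarrow r^{-1/2}D_0$ does not change its class in $KK(C_0^G(M),C^*(G))$; if one wishes to allow $\dirac_0$ to come from a metric other than the positive-scalar-curvature one, the same homotopy device applied to the convex family of $G$-invariant metrics — now with the Hilbert modules trivialized along the path — reduces to the case treated above.
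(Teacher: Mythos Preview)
Your proposal is correct and follows essentially the same strategy as the paper: rescale the metric via Proposition \ref{prop:scaling} to make $B^{rg}$ invertible, and then argue that the index is unchanged under this rescaling. The paper's proof is much terser---it simply asserts that the $KK$-class is independent of the metric, citing \cite{RosenbergTrivial}, and that $F$ and $F^{rg}$ differ by a compact operator---whereas you supply the details: you explicitly exhibit the degenerate Kasparov module at the invertible endpoint (taking $f=0$ so that $F^2=1$), and you build the straight-line homotopy $B_t=\alpha_t D+\Phi$ and check norm-continuity of $F_t$ via the uniform resolvent bounds. Your alternative route through the $KK$-product of Propositions \ref{evenKK}--\ref{oddKK} is in the same spirit as the paper's appeal to metric-independence. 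One small caution: your claim that ``$\alpha_t(D\Phi+\Phi D)+\Phi^2\to 1$'' uses that $D\Phi+\Phi D\to 0$ and $\Phi^2\to 1$ separately, which holds for the explicit $\Phi$ of Section \ref{sec:Phi} but is not part of Definition \ref{def:Gadmissible}; the paper's proof of Proposition \ref{prop:scaling} makes the same tacit restriction, so you are no worse off, but it is worth being aware of.
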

\begin{proof}
	Since $B$ is $G$-invertible at infinity, it follows from the results in section \ref{sec:G-Invertible} that $(\mathcal{E}^0,F)$ defines a class
	$$[B]\coloneqq [\mathcal{E}^0,F]\in KK(\mathbb{C},C^*(G)).$$ 
	This class is independent of the choice of metric on $M$ (see \cite{RosenbergTrivial}). In particular, let $F$ and $F^{rg}$ be the normalised Callias-type operators associated to the metrics $g$ and $rg$ respectively, for some $r>0$. Since  $[\mathcal{E}^0,F]$ and $[\mathcal{E}^0,F^{rg}]$ are related by an element of $\mathcal{K}(\mathcal{E}^0)$, $\ind_G F=\ind_G F^{rg}$. By Propositions \ref{prop:scaling} and \ref{prop:invertibleCallias}, we can find an $r$ such that $F^{rg}=B^{rg}R^{rg}(0)^{1/2}$ is invertible.
\end{proof}
As immediate corollaries, we obtain:
\begin{corollary}[\cite{GMW} Theorem 54]
	Suppose $M/G$ is compact, with $\dirac$ being the $G$-Spin-Dirac operator on $M$. Suppose that $M$ admits a Riemannian metric of positive scalar curvature. Then $
	{\rm index}_G(\dirac) = 0 \in K_*(C^*(G)).$
\end{corollary}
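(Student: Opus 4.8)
The plan is to deduce the corollary from Theorem~\ref{thm:obstruction} by running the argument with the trivial admissible endomorphism $\Phi\equiv 0$, a choice which becomes available precisely because $M/G$ is compact. First I would check that $\Phi\equiv 0$ is $G$-admissible in the sense of Definition~\ref{def:Gadmissible}: conditions (a) and (b) are immediate, while for (c) one uses that when $M/G$ is compact \emph{every} subset of $M$ is cocompact, so taking the cocompact set $K$ to be all of $M$ makes $M\setminus K=\emptyset$ and the required inequality vacuous. Hence $B:=\dirac$, i.e.\ the Callias-type operator $D+\Phi$ with $\Phi=0$ --- more precisely the $\mathbb{Z}_2$-graded Dirac operator $D$ built from $D_0=\dirac_0$ as in section~\ref{sec:G-Callias} --- is a $G$-Callias-type operator, so by Theorems~\ref{thm:CalliasInvertibility} and~\ref{thm:G-Index} it is $G$-invertible at infinity and its bounded transform $F$ has an index $\ind_G(F)\in K_0(C^*(G))$.

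The key bookkeeping step is to identify $\ind_G(F)$ with the classical equivariant index $\ind_G(\dirac)$. Since $M$ is cocompact, the constant function $1$ is $G$-invariant, bounded and cocompactly supported, and $\dirac^2+1\in\mathcal{L}(\mathcal{E}^2,\mathcal{E}^0)$ is invertible (\cite{Kasparov} Theorem 5.8); so by Theorem~\ref{thm:CalliasKKClass} we may compute $[\mathcal{E}^0,F]$ using $f=1$, for which $F=\dirac(\dirac^2+1)^{-1/2}$. With this choice $\mathcal{E}^0$ is literally Kasparov's Hilbert $C^*(G)$-module $\mathcal{E}(S)$ and $(\mathcal{E}^0,F)$ is exactly the Kasparov cycle representing the equivariant index of $\dirac$ in the cocompact case, whence $\ind_G(F)=\ind_G(\dirac)$; when $\dim M$ is odd one matches the two classes after applying the Clifford suspension isomorphism of subsection~\ref{subsec:KK}, landing in $K_1(C^*(G))$.

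Finally, by the standing conventions of section~\ref{sec:Notation} a Riemannian metric on the $G$-manifold $M$ is automatically $G$-invariant, so the hypothesis supplies a $G$-invariant metric of positive scalar curvature and Theorem~\ref{thm:obstruction}, applied with this $\Phi\equiv 0$, gives $\ind_G(F)=0$; combined with the previous paragraph, $\ind_G(\dirac)=0\in K_*(C^*(G))$. (Alternatively one can bypass the rescaling in Proposition~\ref{prop:scaling} entirely: with $\Phi=0$, Lichnerowicz's formula reads $\dirac^2=\nabla^{*}\nabla+\kappa^g/4$, which on the cocompact manifold $M$ is bounded below by $\kappa_0/4>0$ for some $\kappa_0>0$, so $\dirac$ is already invertible and Proposition~\ref{prop:invertibleCallias} applies directly.) I expect the only part requiring genuine care to be the identification in the middle paragraph --- verifying that the $G$-Sobolev module and bounded transform built here from $\dirac$ reproduce Kasparov's module and normalised Dirac operator --- everything else being formal, which is what makes the statement an immediate corollary.
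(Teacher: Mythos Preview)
Your proposal is correct and matches the paper's intent: the paper records this result as an immediate corollary with no further argument, and your approach---taking $\Phi\equiv 0$, which is $G$-admissible precisely because $M/G$ compact makes condition (c) of Definition~\ref{def:Gadmissible} vacuous, so that the $G$-Callias-type operator is just $\dirac$ itself and its index coincides with the classical $\ind_G(\dirac)$---is exactly the natural reading. The only formal wrinkle, which you already anticipate, is that Theorem~\ref{thm:obstruction} and Theorem~\ref{thm:CalliasInvertibility} are literally stated for \emph{non}-cocompact $M$; your parenthetical alternative via Lichnerowicz plus Proposition~\ref{prop:invertibleCallias} sidesteps this cleanly and is in any case the substance of the argument in the cocompact setting.
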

\begin{corollary}[\cite{WeipingShort} Theorem 1.2]
	Suppose $M/G$ is compact, with $\dirac$ being the $G$-Spin-Dirac operator on $M$. Then ${\rm ind}_G(\dirac)=0,$ where ${\rm ind}_G$ denotes the Mathai-Zhang index.
\end{corollary}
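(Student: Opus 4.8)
The plan is to reduce the vanishing of $\ind_G F$ to the invertibility of a conformally rescaled $G$-Callias-type operator, exploiting that the index is determined by a $KK$-class which is insensitive to the choice of $G$-invariant metric. Throughout, $B = D + \Phi$ denotes the $G$-Callias-type operator built from $D_0 = \dirac_0$ and a $G$-admissible $\Phi$, with $D$ the $\mathbb{Z}_2$-graded Dirac operator as in section \ref{sec:G-Callias}, and $F = BR(0)^{1/2}$ its bounded transform.

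First I would record that $B$ is $G$-invertible at infinity by Theorem \ref{thm:CalliasInvertibility}, so by Theorems \ref{thm:G-Index} and \ref{thm:CalliasKKClass} the pair $(\mathcal{E}^0, F)$ is a Kasparov module whose class $[\mathcal{E}^0, F] \in KK(\mathbb{C}, C^*(G))$ maps to $\ind_G F \in K_0(C^*(G))$ and is independent of the cocompactly supported $f$ used in the bounded transform. The crucial preliminary step is to show that this class is also unchanged when the $G$-invariant metric $g$ is replaced by $rg$ for a constant $r>0$. Rescaling by a constant multiplies the $G$-invariant volume form by the scalar $r^{n/2}$, so multiplication by $r^{-n/4}$ is a scalar unitary $\mathcal{E}^0_{rg}\xrightarrow{\sim}\mathcal{E}^0_g$ of Hilbert $C^*(G)$-modules; transporting $F^{rg}$ along this identification and running the construction of $F$ over the straight-line path $t\mapsto tg$, $t$ between $1$ and $r$ (with continuity of the resolvents $R(\lambda)$ in the family parameter checked as in subsection \ref{subsec:equivalentdef}), produces a norm-continuous family of $C^*(G)$-Fredholm operators on $\mathcal{E}^0_g$, whence $\ind_G F = \ind_G F^{rg}$ by homotopy invariance of the $KK$-index. (This is the metric-triviality of the index used implicitly; cf.\ \cite{RosenbergTrivial}.)

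With that in hand, I would invoke Lichnerowicz: $B^2 = \nabla^*\nabla + \tfrac{\kappa^g}{4} + (D\Phi+\Phi D) + \Phi^2$, where $D\Phi+\Phi D$ equals Clifford multiplication by a one-form $dR$, $R$ being the projection or unitary defining $\Phi$. Outside the cocompact set $K$ of Definition \ref{def:Gadmissible}(c) one has $(D\Phi+\Phi D)+\Phi^2\geq C>0$, while on $K$ positivity of the scalar curvature together with cocompactness of $K$ gives $\tfrac{\kappa^g}{4}\geq\kappa_0/4>0$ and $D\Phi+\Phi D$ is bounded below by a fixed constant. Since the scalar curvature term and Clifford multiplication by a one-form scale oppositely under $g\mapsto rg$, one can pick $r$ so that the curvature term dominates on $K$ and $(B^{rg})^2$ becomes strictly positive — this is precisely Proposition \ref{prop:scaling}. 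By Proposition \ref{prop:invertibleCallias}, $B^{rg}$ is then invertible, and hence so is $F^{rg} = B^{rg}R^{rg}(0)^{1/2}$ (indeed, as $(B^{rg})^2$ is globally invertible one may take $f=0$, and $F^{rg}=B^{rg}\big((B^{rg})^2\big)^{-1/2}$ is unitary). An invertible element of $\mathcal{L}(\mathcal{E}^0)$ is $C^*(G)$-Fredholm with zero index, so $\ind_G F = \ind_G F^{rg} = 0 \in K_0(C^*(G))$.

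The hard part will be the metric-independence step: the Hilbert module $\mathcal{E}^0$ depends on $g$ through the volume form, and the filtration $\mathcal{E}^1\supset\mathcal{E}^2$ entering $R(\lambda)$ and $F$ depends on $B$ and hence on $g$, so $F^g$ and $F^{rg}$ cannot be compared on the nose. One must either exhibit an explicit unitary intertwining them modulo $\mathcal{K}(\mathcal{E}^0)$, or organise a homotopy of Kasparov modules over the one-parameter family $tg$ with sufficient care about strong measurability and continuity of the resolvents in the scaling parameter (as in the Bochner-integral discussion preceding Proposition \ref{prop:equivalentdefinition}). Everything downstream of that reduces to a direct application of the already-established Propositions \ref{prop:scaling} and \ref{prop:invertibleCallias}.
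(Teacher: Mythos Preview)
Your proposal reproves Theorem \ref{thm:obstruction} rather than the stated Corollary. The paper offers no separate argument for the Corollary beyond the phrase ``as immediate corollaries, we obtain''; the intended deduction is that when $M/G$ is compact the Callias machinery degenerates to the ordinary $G$-invariant Dirac operator (one may take $\Phi=0$, since the cocompact set $K$ in Definition \ref{def:Gadmissible}(c) can be all of $M$), and then vanishing of the $K_0(C^*(G))$-index forces vanishing of the Mathai--Zhang index via the trace map $\tau_G$ discussed after the simple example in section \ref{sec:G-Invertible}. Your write-up never makes either of these reductions: you work throughout with a non-trivial $\Phi$ on a non-cocompact $M$ and conclude $\ind_G F=0\in K_0(C^*(G))$, which is the conclusion of Theorem \ref{thm:obstruction}, not of the Corollary. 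In particular the Mathai--Zhang index --- a numerical invariant, not a $K$-theory class --- is never mentioned, so as a proof of the stated Corollary there is a genuine gap.

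That said, the part you did write matches the paper's own proof of Theorem \ref{thm:obstruction} essentially line for line: define the $KK$-class, invoke metric-independence (the paper cites \cite{RosenbergTrivial} for this rather than spelling out the homotopy you sketch), rescale via Proposition \ref{prop:scaling}, and conclude invertibility via Proposition \ref{prop:invertibleCallias}. Your attention to the metric-dependence of $\mathcal{E}^0$ and the need for a careful homotopy is more scrupulous than the paper, which simply asserts that $[\mathcal{E}^0,F]$ and $[\mathcal{E}^0,F^{rg}]$ differ by a compact. To turn your argument into a proof of the Corollary, add one sentence specialising to the cocompact case with $\Phi=0$ and one sentence noting that the Mathai--Zhang index factors through $K_0(C^*(G))$.
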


\bibliographystyle{abbrv}
\bibliography{G-Callias.bib}

\end{document}